\newtheorem{theorem}{Theorem}[section]
\newtheorem{lemma}{Lemma}[section]
\newtheorem{definition}{Definition}[section]{\bf}{\rm}
\newtheorem{assumpt}{Assumption}[section]{\bf}{\rm}
\newtheorem{rem}{Remark}[section]{\itshape}{\rmfamily}
\newenvironment{proof}{\noindent{\it Proof.~~}}{\qed\medskip}
\def\eqnarray{\stepcounter{equation}\let\@currentlabel=\theequation
\global\@eqnswtrue
\global\@eqcnt\z@\tabskip\@centering\let\\=\@eqncr
$$\halign to \displaywidth\bgroup\@eqnsel\hskip\@centering
  $\displaystyle\tabskip\z@{##}$&\global\@eqcnt\@ne 
  \hfil$\;{##}\;$\hfil
  &\global\@eqcnt\tw@ $\displaystyle\tabskip\z@{##}$\hfil 
   \tabskip\@centering&\llap{##}\tabskip\z@\cr}
    \renewcommand{\theequation}{%
    \thesection.\arabic{equation}}
\newcommand{\dm}{\displaystyle}
\newcommand{\qed}{\hspace*{\fill}$\Box$}
\newcommand{\lleft}{\!\!\left}
\newcommand{\vc}{\bm}
\def\svc#1{\mbox{\boldmath $\scriptstyle #1$}}
\newcommand{\wt}{\widetilde}
\newcommand{\vmax}{\vee}
\def\trunc#1{{}_{(n)}#1}
\newcommand{\sfBI}{\mathsf{BI}}
\newcommand{\sfBM}{\mathsf{BM}}
\newcommand{\EE}{\mathsf{E}}
\newcommand{\PP}{\mathsf{P}}
\newcommand{\calE}{\mathcal{E}}
\newcommand{\bbD}{\mathbb{D}}
\newcommand{\bbF}{\mathbb{F}}
\newcommand{\bbN}{\mathbb{N}}
\newcommand{\bbZ}{\mathbb{Z}}
\newcommand{\rmd}{{\rm d}}
\newcommand{\rme}{{\rm e}}
\renewcommand{\labelenumi}{(\roman{enumi})}
\newcommand{\dd}[1]{\if#11 1\!\!1 
\else {\if#1C I\!\!\!C
\else {\if#1G I\!\!\!G 
\else {\if#1J J\!\!\!J 
\else {\if#1S S\!\!\!S
\else {\if#1Z Z\!\!\!Z
\else {\if#1Q O\!\!\!\!Q
\else I\!\!#1
\fi}
\fi}
\fi}
\fi} 
\fi} 
\fi} 
\fi} 
\begin{document}\thispagestyle{plain} 

\hfill

{\Large{\bf
\begin{center}
Error bounds for augmented truncations of discrete-time block-monotone 
Markov chains under subgeometric drift conditions%
\footnote[1]{This paper is published in {\it SIAM Journal on Matrix Analysis and Applications}, vol.\ 37, no.\ 3, pp.\ 877--910, 2016. 
}
%
%
\end{center}
}
}

\begin{center}
{
Hiroyuki Masuyama%
\footnote[2]{E-mail: masuyama@sys.i.kyoto-u.ac.jp}
}

\medskip

{\small
Department of Systems
Science, Graduate School of Informatics, Kyoto University\\
Kyoto 606-8501, Japan
}

\bigskip
\medskip

{\small
\textbf{Abstract}

\medskip

\begin{tabular}{p{0.85\textwidth}}
This paper studies the last-column-block-augmented northwest-corner
truncation (LC-block-augmented truncation, for short) of discrete-time
block-monotone Markov chains under subgeometric drift conditions.  The
main result of this paper is to present an upper bound for the total
variation distance between the stationary probability vectors of a
block-monotone Markov chain and its LC-block-augmented truncation. The
main result is extended to Markov chains that themselves may not be
block monotone but are block-wise dominated by block-monotone Markov
chains satisfying modified drift conditions. Finally, as an
application of the obtained results, the GI/G/1-type Markov chain is
considered.
\end{tabular}
}
\end{center}

\begin{center}
\begin{tabular}{p{0.90\textwidth}}
{\small
{\bf Keywords:} %
Last-column-block-augmented (LC-block-augmented),
northwest-corner truncation,
block monotonicity,
subgeometric drift condition,
GI/G/1-type Markov chain
%
%

\medskip

{\bf Mathematics Subject Classification:} %
60J10, 60J22, 60K25
}
\end{tabular}

\end{center}

\section{Introduction}\label{introduction}

This paper considers the truncation approximation of discrete-time
block-structured Markov chains characterized by an infinite number of
block matrices.  The typical examples of such Markov chains are
M/G/1-, GI/M/1- and GI/G/1-type Markov chains and level-dependent
quasi-birth-and-death processes (LD-QBDs) (see, e.g., \cite{He14} and
the references therein) and they appear as the queue length processes
of various semi-Markovian queues (see, e.g., \cite{Seng90}).

It is a challenging problem to obtain the stationary probability
vectors of block-structured Markov chains characterized by an infinite
number of block matrices. To solve this problem, we often use the
augmented northwest-corner truncation (augmented truncation, for
short). More specifically, we form a finite stochastic matrix by
augmenting, in some way, the (finite) northwest-corner truncation of
the transition probability matrix and then adopt the stationary
probability vector of the resulting finite stochastic matrix as an
approximation to that of the original Markov chain.

Although there are infinitely many variations of such augmented
truncation, this paper focuses on the last-column-block-augmented
northwest-corner truncation (LC-block-augmented truncation, for short)
because it is proved that the LC-block-augmentation is the best (in a
certain sense) among all the {\it block augmentations} if they are
applied to the northwest-corner truncations of block-monotone
transition probability matrices (see \cite[Theorem 3.6]{Li00} and
\cite[Theorem 4.1]{Masu15-LAA}). Note that block monotonicity is an
extension of (classical) monotonicity to block-structured Markov
chains \cite[Definition 2.5]{Li00}.

The main purpose of this paper is to estimate the error of the
stationary probability vector obtained by the LC-block-augmented
truncation.  There are some related studies on the truncation of
Markov chains.  Tweedie~\cite{Twee98} presented a
total-variation-distance error bound for the stationary probability
vector of the last-column-augmented northwest-corner truncation of a
monotone and geometrically ergodic Markov chain in discrete time. More
precisely, Tweedie~\cite{Twee98}'s bound is an upper bound for the
total variation distance between the stationary probability vectors of
the original Markov chain and its last-column-augmented
northwest-corner truncation.  Hart and Tweedie~\cite{Hart12} and
Liu~\cite{Liu15} discussed the convergence of the stationary
probability vectors of the augmented truncations of continuous-time
Markov chains with monotonicity and/or exponential ergodicity.
Masuyama~\cite{Masu15,Masu15-LAA} extended Tweedie~\cite{Twee98}'s
result to block-monotone Markov chains in discrete and continuous
time.

Without the monotonicity of Markov chains, Herv\'{e} and
Ledoux~\cite{Herv14} derived a total-variation-distance error bound
for the stationary probability vector of the last-column-augmented
northwest-corner truncation of a geometrically ergodic Markov chain in
discrete time.  Zeifman et al.~\cite{Zeif14a} considered a periodic
and exponentially weakly ergodic non-time-homogeneous birth-and-death
process in continuous time, and they presented a
total-variation-distance error bound for the periodic stationary
distribution obtained by the truncation of the state space (see also \cite{Zeif14b,Zeif12}).

Basically, all the existing results mentioned above assume geometric
ergodicity (exponential ergodicity in continuous time) and
thus they are not applicable to Markov chains with subgeometric
ergodicity (including polynomial ergodicity). For example, reflected
Markov additive processes and GI/G/1-type Markov chains with the
heavy-tailed asymptotics \cite{Jarn03,Mao14}, which typically arise
from BMAP/GI/1 queues with subexponential service times and/or batch
sizes \cite{Masu09,Masu15-ANOR}. As far as we know, there are no
studies on the error estimation of the augmented truncation of Markov
chains with subgeometric ergodicity, except for Liu~\cite{Liu10}'s
work. Liu~\cite{Liu10} derived a total-variation-distance error bound
for the stationary probability vector of the last-column-augmented
northwest-corner truncation of a monotone and polynomially ergodic
Markov chain in discrete time.

In this paper, we consider a block-monotone Markov chain under the
drift condition proposed by Douc et al.~\cite{Douc04}, which covers
polynomial ergodicity and other types of subgeometric ergodicity. We
first derive an upper bound for the total variation distance between
the stationary probability vectors of the original Markov chain and
its LC-block-augmented truncation, which is the main result of this
paper. We also present a similar bound in the case where the original
Markov chain itself may not be block monotone but is block-wise
dominated by a block-monotone Markov chain satisfying a modified drift
condition with a larger tolerance for boundary exceptions. The
modified drift condition strengthens the applicability of the obtained
bound. Finally, we provide a detailed procedure for establishing our
error bounds for the LC-block-augmented truncations of block-monotone
GI/G/1-type Markov chains.

The rest of this paper is divided into five
sections. Section~\ref{sec-preliminaries} provides preliminary results
on block-monotone stochastic matrices. Section~\ref{sec-main-results}
presents the main result of this paper. Section~\ref{sec-extension}
contains the extension of the main result to possibly
non-block-monotone Markov chains that are block-wise dominated by
block-monotone Markov chains satisfying the modified drift
condition. Section~\ref{sec-applications} considers the application of
the extended result to block-monotone GI/G/1-type Markov chains.
Finally, Section~\ref{sec-remarks} provides concluding remarks.

\section{Preliminaries}\label{sec-preliminaries}

Let $\{(X_{\nu},J_{\nu});\nu\in\bbZ_+\}$ denote a Markov chain with
state space $\bbF=\bbZ_+ \times \bbD$, where $\bbZ_+=\{0,1,2,\dots\}$
and $\bbD = \{1,2,\dots,d\} \subset \bbN:=\{1,2,\dots\}$.  Let
$\vc{P}:=(p(k,i;\ell,j))_{(k,i),(\ell,j)\in\bbF}$ denote the
transition probability matrix of the Markov chain
$\{(X_{\nu},J_{\nu})\}$, i.e.,
\[
p(k,i;\ell,j)
=\PP(X_{\nu+1} = \ell, J_{\nu+1} = j \mid X_{\nu} = k, J_{\nu} = i),
\qquad (k,i;\ell,j) \in \bbF^2,
\]
where $(k,i;\ell,j)$ represents ordered pair $((k,i),(\ell,j))$. Note
here that $\vc{P}$ is row stochastic (stochastic, for short,
hereafter), i.e., $\vc{P}\vc{e} = \vc{e}$, where $\vc{e}$ denotes a
column vector of 1's of an appropriate order.

For $n \in \bbN$, let
$\trunc{\vc{P}}_n:=(\trunc{p}_n(k,i;\ell,j))_{(k,i),(\ell,j)\in\bbF}$
denote a stochastic matrix such that, for $i,j\in\bbD$,
\begin{equation}
\trunc{p}_n(k,i;\ell,j)
= 
\left\{
\begin{array}{ll}
p(k,i;\ell,j), & k \in \bbZ_+,\ \ell = 0,1,\dots,n-1,
\\
\dm\sum_{m=n}^{\infty} p(k,i;m,j), & k \in \bbZ_+,\ \ell = n,
\\
0, & \mbox{otherwise}.
\end{array}
\right.
\label{def-trunc{p}_n(k,i;l,j)}
\end{equation}
By definition, $\vc{P}$ and $\trunc{\vc{P}}_n$ can be partitioned into
block matrices with size $d$. Furthermore,
(\ref{def-trunc{p}_n(k,i;l,j)}) implies that $\trunc{\vc{P}}_n$ is in
the following form:
\begin{equation}
\trunc{\vc{P}}_n
= \bordermatrix{
               	& \bbF^{\leqslant n} 	&   \bbF \setminus \bbF^{\leqslant n}    
\cr
\bbF^{\leqslant n} 	& 
\trunc{\vc{P}}_n^{\leqslant n}	& 
\vc{O}
\cr
\bbF \setminus \bbF^{\leqslant n} 		& 
\trunc{\vc{P}}_n^{\ast} &	
\vc{O} 
},
\label{add-eqn-22}
\end{equation}
where $\bbF^{\leqslant n} = \{0,1,\dots,n\} \times \bbD$ and $\vc{O}$
denotes the zero matrix of an appropriate order. Equation
(\ref{add-eqn-22}) shows that the sub-state space $\bbF \setminus
\bbF^{\leqslant n}$ of $\trunc{\vc{P}}_n$ is transient and thus the
submatrix $\trunc{\vc{P}}_n^{\ast}$ of $\trunc{\vc{P}}_n$ does not
have any contribution to the stationary probability vector of
$\trunc{\vc{P}}_n$. Therefore, we call the whole matrix
$\trunc{\vc{P}}_n$ the {\it last-column-block-augmented
  (LC-block-augmented) northwest-corner truncation} ({\it
  LC-block-augmented truncation}, for short).  The LC-block-augmented
truncation $\trunc{\vc{P}}_n$ is also called the {\it
  last-column-block-augmented first-$n$-block-column truncation} in
\cite{Masu15}.  It should be noted that, since
$\trunc{\vc{P}}_n^{\leqslant n}$ in (\ref{add-eqn-22}) is a finite
stochastic matrix, the LC-block-augmented truncation
$\trunc{\vc{P}}_n$ always has at least one stationary probability
vector (see, e.g., \cite[Chapter 3, Theorems 3.1 and 3.3]{Brem99}),
which is denoted by
$\trunc{\vc{\pi}}_n:=(\trunc{\pi}_n(k,i))_{(k,i)\in \bbF}$.

We now assume that $\vc{P}$ is irreducible and positive recurrent. We
then define $\vc{\pi}:=(\pi(k,i))_{(k,i)\in \bbF}$ as the unique
stationary probability vector of $\vc{P}$.  We also assume, unless
otherwise stated, that $\vc{P}$ is block monotone with block sized $d$
(see \cite[Definition 1.1]{Masu15}), i.e.,
\begin{equation}
\sum_{m=\ell}^{\infty}\vc{P}(k;m) \le \sum_{m=\ell}^{\infty}\vc{P}(k+1;m),
\qquad k,\ell \in \bbZ_+,
\label{defn-P-BM_d}
\end{equation}
where $\vc{P}(k;\ell) := (p(k,i;\ell,j))_{i,j\in\bbD}$ is the
$(k,\ell)$th block of $\vc{P}$.  To shorten the statements on block
monotonicity, let $\sfBM_d$ denote the set of block-monotone
stochastic matrices with block size $d$. It then follows from $\vc{P}
\in \sfBM_d$ that a stochastic matrix $\sum_{m=0}^{\infty}\vc{P}(k;m)$
is constant with $k \in \bbZ_+$ (see \cite[Proposition
  1.1]{Masu15}). Furthermore, since $\vc{P}$ is irreducible, the
stochastic matrix $\vc{\Psi}:=\sum_{m=0}^{\infty}\vc{P}(k;m)$ is
irreducible and thus has the unique stationary probability vector,
denoted by $\vc{\varpi}:=(\varpi(i))_{i\in\bbD}$.

Finally, we introduce some symbols and definitions related to block
monotonicity. Let
\[
\vc{T}_d
= \left(
\begin{array}{ccccc}
\vc{I}_d & \vc{O} & \vc{O} & \vc{O} & \cdots
\\
\vc{I}_d & \vc{I}_d & \vc{O} & \vc{O} & \cdots
\\
\vc{I}_d & \vc{I}_d & \vc{I}_d & \vc{O} & \cdots
\\
\vc{I}_d & \vc{I}_d & \vc{I}_d & \vc{I}_d & \cdots
\\
\vdots      & \vdots      & \vdots      & \vdots      & \ddots
\end{array}
\right),~~
\vc{T}_d^{-1}
= \left(
\begin{array}{ccccc}
\vc{I}_d & \vc{O} & \vc{O} & \vc{O} & \cdots
\\
-\vc{I}_d & \vc{I}_d & \vc{O} & \vc{O} & \cdots
\\
\vc{O} & -\vc{I}_d & \vc{I}_d & \vc{O} & \cdots
\\
\vc{O} & \vc{O} & -\vc{I}_d & \vc{I}_d & \cdots
\\
\vdots      & \vdots      & \vdots      & \vdots      & \ddots
\end{array}
\right),
\] 
where $\vc{I}_d$ denotes the $d \times d$ identity matrix (we write
$\vc{I}$ for the identity matrix whose order is clear from the
context). Note here that (\ref{defn-P-BM_d}) is equivalent to
\[
\vc{T}_d^{-1} \vc{P}\vc{T}_d \ge \vc{O}.
\]
\begin{definition}\label{defn-BI}
A column vector $\vc{f}=(f(k,i))_{(k,i)\in\bbF}$ with block size $d$
is said to be block increasing if $\vc{T}_d^{-1}\vc{f} \ge \vc{0}$,
i.e., $f(k,i) \le f(k+1,i)$ for all $(k,i) \in \bbZ_+ \times \bbD$. We
denote by $\sfBI_d$ the set of block-increasing column vectors with
block size $d$.
\end{definition}

\medskip

\begin{definition}\label{defn-block-wise-dominance}
A probability vector $\vc{\mu}:=(\mu(k,i))_{(k,i)\in\bbF}$ with block
size $d$ is said to be block-wise dominated by a probability vector
$\vc{\eta}:=(\eta(k,i))_{(k,i)\in\bbF}$ (denoted by $\vc{\mu} \prec_d
\vc{\eta}$) if $\vc{\mu}\vc{T}_d \le \vc{\eta}\vc{T}_d$.  Similarly, a
stochastic matrix
$\vc{P}_1:=(p_1(k,i;\ell,j))_{(k,i),(\ell,j)\in\bbF}$ with block size
$d$ is said to be block-wise dominated by a stochastic matrix
$\vc{P}_2:=(p_2(k,i;\ell,j))_{(k,i),(\ell,j)\in\bbF}$ (denoted by
$\vc{P}_1 \prec_d \vc{P}_2$) if $\vc{P}_1\vc{T}_d \le
\vc{P}_2\vc{T}_d$.
\end{definition}

\medskip

It is known that if $\vc{P} \in \sfBM_d$ then $\trunc{\vc{P}}_n
\prec_d \vc{P} $ and thus $\trunc{\vc{\pi}}_n \prec_d \vc{\pi}$ (see
\cite[Proposition~2.3]{Masu15}), which leads to
\begin{equation}
\sum_{k=0}^{\infty} \trunc{\pi}_n(k,i) 
= \sum_{k=0}^{\infty} \pi(k,i) = \varpi(i),\qquad i \in \bbD.
\label{eqn-varpi(i)}
\end{equation}

\section{Main result}\label{sec-main-results}

This section presents an upper bound for $\| \trunc{\vc{\pi}}_n -
\vc{\pi} \|$, where $\| \cdot \|$ denotes the total variation
distance, i.e.,
\[
\| \trunc{\vc{\pi}}_n - \vc{\pi} \|
= \sum_{(k,i)\in\bbF} | \trunc{\pi}_n(k,i) - \pi(k,i) |.
\]
Let $\vc{p}^m(k,i) := (p^m(k,i;\ell,j))_{(\ell,j)\in\bbF}$ and
$\trunc{\vc{p}}_n^m(k,i)
:=(\trunc{p}_n^m(k,i;\ell,j))_{(\ell,j)\in\bbF}$ denote probability
vectors such that $p^m(k,i;\ell,j)$ and $\trunc{p}_n^m(k,i;\ell,j)$
represent the $(k,i;\ell,j)$th elements of $\vc{P}^m$ and
$(\trunc{\vc{P}}_n)^m$, respectively. For any function
$\varphi(\cdot,\cdot)$ on $\bbF$, let $\varphi(k,\vc{\varpi}) =
\sum_{i\in\bbD} \varpi(i)\varphi(k,i)$ for $k \in \bbZ_+$. We then
have
\begin{eqnarray}
\left\| \trunc{\vc{\pi}}_n - \vc{\pi} \right\|
&\le& \left\| \vc{p}^m(0,\vc{\varpi}) - \vc{\pi} \right\|
+ \left\| \trunc{\vc{p}}_n^m(0,\vc{\varpi}) - \trunc{\vc{\pi}}_n \right\|
\nonumber
\\
&& {} \quad 
+ \left\| \trunc{\vc{p}}_n^m(0,\vc{\varpi}) - \vc{p}^m(0,\vc{\varpi}) \right\|.
\label{add-eqn-14}
\end{eqnarray}
It also follows from the second last inequality in the proof of
\cite[Theorem 3.1]{Masu15} that
\begin{equation}
\left\| \trunc{\vc{p}}_n^m(0,\vc{\varpi}) - \vc{p}^m(0,\vc{\varpi}) \right\|
\le 2m\sum_{i\in\bbD}\trunc{\pi}_n(n,i).
\label{estimation-01}
\end{equation}

To estimate the first and second terms on the right hand side of
(\ref{add-eqn-14}), we assume the subgeometric drift condition
proposed in \cite{Douc04}, which is described in
Assumption~\ref{assumpt-poly} below. For the description of the drift
condition, let $\vc{1}_K=(1_K(k,i))_{(k,i)\in\bbF}$, $K\in \bbZ_+$
denote a column vector such that
\[
1_K(k,i)
=\left\{
\begin{array}{ll}
1, & (k,i) \in \bbF^{\leqslant K},
\\
0, & (k,i) \in \bbF \setminus \bbF^{\leqslant K}.
\end{array}
\right.
\]
In addition, for any scalar-valued function $\theta$ on
$(-\infty,\infty)$ and any real-valued column vector $\vc{a}:=(a(i))$,
let $\theta \circ \vc{a} = (\theta \circ a(i))$.

\medskip

\begin{assumpt}[{}{\cite[Condition ${\bf D}(\phi,V,C)$]{Douc04}}]\label{assumpt-poly}
There exist a constant $b \in (0,\infty)$, a column vector
$\vc{v}=(v(k,i))_{(k,i)\in\bbF} \in \sfBI_d$ with $\vc{v} \ge \vc{e}$,
and a nondecreasing differentiable concave function $\phi:[1,\infty)
  \to (0,\infty)$ with $\lim_{t\to\infty}\phi'(t) = 0$ such that
\begin{equation}
\vc{P}\vc{v} \le \vc{v} - \phi \circ \vc{v} + b\vc{1}_0.
\label{ineqn-Pv-02}
\end{equation}

\end{assumpt}

\medskip

\begin{rem}\label{rem-drift-cond}
If $\lim_{t\to\infty}\phi'(t) = c$ for some $c > 0$, then
Assumption~\ref{assumpt-poly} is reduced to the geometric drift
condition (see \cite[Remark 1]{Douc04} for the details): There exist
$b \in (0,\infty)$, $\gamma \in (0,1)$ and a column vector $\vc{v} \in
\sfBI_d$ with $\vc{v} \ge \vc{e}$ such that
\begin{equation*}
\vc{P}\vc{v} \le \gamma \vc{v} + b\vc{1}_0,
\end{equation*}
which is assumed in the
related studies \cite{Herv14,Masu15,Twee98}.
\end{rem}

\medskip

Under Assumption~\ref{assumpt-poly}, the irreducible stochastic matrix
$\vc{P}$ is subgeometrically ergodic if $\vc{P}$ is aperiodic
\cite[Proposition 2.5]{Douc04}. However, we do not necessarily assume
the aperiodicity of $\vc{P}$.

We now introduce some symbols according to
\cite[Section~2]{Douc04}. Let $H_{\phi}$ denote a function on
$[1,\infty)$ such that
\begin{equation}
H_{\phi}(x) = \int_1^x {\rmd y \over \phi(y)},\qquad x \ge 1.
\label{defn-H_{phi}}
\end{equation}
Clearly, $H_{\phi}$ is an increasing differentiable concave function,
and $\lim_{x\to\infty}H_{\phi}(x) = \infty$ due to the concavity of
$\phi$ (see \cite[section 2]{Douc04}). Thus, the inverse
$H_{\phi}^{-1}:[0,\infty) \to [1,\infty)$ of $H_{\phi}$ is
    well-defined and $\lim_{x\to\infty}H_{\phi}^{-1}(x) = \infty$.
    Furthermore, it follows from (\ref{defn-H_{phi}}) that
    $H_{\phi}^{-1}$ is an increasing differentiable function and
\begin{equation}
r_{\phi}(x)
:=
(H_{\phi}^{-1})'(x) = \phi \circ H_{\phi}^{-1}(x),
\qquad x \ge 0.
\label{defn-r_{phi}}
\end{equation}
The function $r_{\phi}$ is nondecreasing because $\phi$ is
nondecreasing and $H_{\phi}^{-1}$ is increasing. In addition, it
follows from \cite[Proposition~2.1]{Douc04} that $r_{\phi}$ is
log-concave. For convenience, we define $r_{\phi}(x) = 0$ for $x < 0$.

In what follows, we present two lemmas and the main result of this
paper.

\medskip

\begin{lemma}\label{lem-bound-moment-tau}
Consider the Markov chain $\{(X_{\nu},J_{\nu});\nu\in\bbZ_+\}$ with
state space $\bbF$ and transition probability matrix $\vc{P}$, and let
$\tau_0^+ = \inf\{\nu \in \bbN; X_{\nu} = 0\}$. If $\vc{P} \in
\sfBM_d$, $\vc{P}$ is irreducible and Assumption~\ref{assumpt-poly}
holds, then
\begin{eqnarray}
\EE_{(k,i)}[r_{\phi}(\tau_0^+-1)]
&\le& v(k \vee 1,i), \qquad (k,i) \in \bbF, 
\label{bound-moment-tau(a)}
\\
\EE_{\svc{\pi}}[r_{\phi}(\tau_0^+-1)]
&\le& v(k \vee 1,\vc{\varpi}), \qquad  ~~\,k \in \bbZ_+, 
\label{bound-moment-tau(b)}
\end{eqnarray}
where $x \vee y=\max(x,y)$ and
\begin{eqnarray*}
\EE_{\svc{\pi}}[\,\cdot\,] 
&=& \sum_{(k,i)\in\bbF}\pi(k,i)\EE_{(k,i)}[\,\cdot\,],
\\
\EE_{(k,i)}[\,\cdot\,] &=& \EE[\,\cdot \mid X_0=k,J_0=i],\qquad (k,i) \in \bbF.
\end{eqnarray*}

\end{lemma}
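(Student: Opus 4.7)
My plan is to exploit the subgeometric drift condition (\ref{ineqn-Pv-02}) to build a supermartingale whose terminal value controls $r_\phi(\tau_0^+-1)$, and then to reduce the boundary case $k=0$ to the interior via block monotonicity.

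For (\ref{bound-moment-tau(a)}) with $k\ge 1$, the trajectory $X_0,\dots,X_{\tau_0^+-1}$ stays outside the atom $\{0\}\times\bbD$, so the boundary term $b\vc{1}_0$ in (\ref{ineqn-Pv-02}) is annihilated along the path and the ``clean'' drift $\EE[v(X_{n+1})\mid\calF_n]\le v(X_n)-\phi\circ v(X_n)$ holds for every $n<\tau_0^+$. I would then follow the template of Douc, Fort, Moulines, and Soulier~\cite{Douc04}. The analytic engine is the pair of identities $r_\phi=\phi\circ H_\phi^{-1}=(H_\phi^{-1})'$ together with the one-step decrement $H_\phi(y)-H_\phi(y-\phi(y))\ge 1$, which follows from $H_\phi'=1/\phi$ and the monotonicity of $1/\phi$. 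Iterating this estimate and combining it with the clean drift produces a supermartingale that controls the running sum $\sum_{j=0}^{n-1}r_\phi(j)$; optional stopping at $\tau_0^+\wedge N$ followed by $N\to\infty$ yields
\[ \EE_{(k,i)}\!\left[\sum_{j=0}^{\tau_0^+-1}r_\phi(j)\right]\le v(k,i),\qquad k\ge 1. \]
Since $r_\phi$ is nondecreasing, $r_\phi(\tau_0^+-1)\le\sum_{j=0}^{\tau_0^+-1}r_\phi(j)$, which delivers (\ref{bound-moment-tau(a)}) for $k\ge 1$.

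The case $k=0$ of (\ref{bound-moment-tau(a)}) reduces to the previous one via block monotonicity. Under $\vc{P}\in\sfBM_d$ and Definition~\ref{defn-block-wise-dominance}, the one-step transition from $(0,i)$ is block-wise dominated by that from $(1,i)$; a coupling argument, using that the lower-starting chain is absorbed at level~$0$ no later than the higher-starting one together with the monotonicity of $r_\phi$, gives
\[ \EE_{(0,i)}[r_\phi(\tau_0^+-1)]\le\EE_{(1,i)}[r_\phi(\tau_0^+-1)]\le v(1,i)=v(0\vee 1,i), \]
where the last inequality is the $k=1$ instance just proved. Finally, (\ref{bound-moment-tau(b)}) is obtained by averaging (\ref{bound-moment-tau(a)}) against the phase stationary weights $\varpi(i)$:
\[ \sum_{i\in\bbD}\varpi(i)\,\EE_{(k,i)}[r_\phi(\tau_0^+-1)]\le\sum_{i\in\bbD}\varpi(i)\,v(k\vee 1,i)=v(k\vee 1,\vc{\varpi}), \]
for every $k\in\bbZ_+$.

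The central technical hurdle is the supermartingale construction in the $k\ge 1$ case: passing from the single-step drift $v\mapsto v-\phi\circ v$ to a per-step accumulation of $r_\phi(n)$ lies at the heart of the DFMS method and relies essentially on the concavity of $\phi$ (hence of $H_\phi$) together with the log-concavity of $r_\phi$. Once this piece is in place, the reduction of $k=0$ via block monotonicity and the phase-averaging step in~(b) are routine.
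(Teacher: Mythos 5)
Your argument for part (a) is correct and essentially reproduces the paper's route: for $k\ge1$ the bound $\EE_{(k,i)}\bigl[\sum_{n=0}^{\tau_0^+-1}r_{\phi}(n)\bigr]\le v(k,i)$ is exactly \cite[Proposition~2.2]{Douc04} (the paper cites it rather than re-deriving the supermartingale), the reduction of $k=0$ to $k=1$ is the pathwise-ordered property \cite[Lemma~A.1]{Masu15}, and the final step $r_{\phi}(\tau_0^+-1)\le\sum_{n=0}^{\tau_0^+-1}r_{\phi}(n)$ is the paper's (\ref{eqn-35a}).

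Part (b), however, has a genuine gap. The quantity $\EE_{\svc{\pi}}[\,\cdot\,]$ is a stationary average over the \emph{entire} state space $\bbF$,
\[
\EE_{\svc{\pi}}[r_{\phi}(\tau_0^+-1)]=\sum_{(\ell,j)\in\bbF}\pi(\ell,j)\,\EE_{(\ell,j)}[r_{\phi}(\tau_0^+-1)],
\]
while what you produced is the \emph{phase}-averaged quantity $\sum_{i\in\bbD}\varpi(i)\,\EE_{(k,i)}[r_{\phi}(\tau_0^+-1)]$ for a fixed level $k$. These are not the same, and in general neither dominates the other; in particular $\pi(\ell,j)$ does not factor as $\pi(\ell)\varpi(j)$, and summing your estimate over $\ell$ with weights $\pi(\ell)$ would give a bound $\sum_{\ell}\pi(\ell)\,v(\ell\vee1,\vc{\varpi})$, which exceeds $v(k\vee1,\vc{\varpi})$. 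The paper closes this gap by a different mechanism: it writes $r_{\phi}(\tau_0^+-1)=\sum_{n=0}^{\tau_0^+-1}\Delta r_{\phi}(n)$ with $\Delta r_{\phi}(n)=r_{\phi}(n)-r_{\phi}(n-1)$, applies Kac's representation of $\vc{\pi}$ \cite[Theorem~10.4.9]{Meyn09} to convert $\EE_{\svc{\pi}}$ into an excursion expectation started from level $0$, interchanges the order of summation to recover $\sum_{n=0}^{\tau_0^+-1}r_{\phi}(n)$, and only then invokes $\pi(0,i)\le\varpi(i)$ and (\ref{eqn-36}). You need this Kac-plus-telescoping step (or an equivalent); the naive averaging does not establish (\ref{bound-moment-tau(b)}).
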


\medskip

\begin{proof}
We first prove (\ref{bound-moment-tau(a)}).
Note that
\begin{equation}
\EE_{(k,i)} \lleft[ r_{\phi}(\tau_0^+-1) \right]
\le \EE_{(k,i)} \lleft[ \sum_{n=0}^{\tau_0^+-1} r_{\phi}(n)\right],
\qquad (k,i) \in \bbF.
\label{eqn-35a}
\end{equation}
It then follows from \cite[Proposition~2.2]{Douc04} that
\begin{equation}
\EE_{(k,i)}
\lleft[ \sum_{n=0}^{\tau_0^+ - 1} r_{\phi}(n)\right] \le v(k,i),
\qquad k \in \bbN,\ i \in \bbD.
\label{add-eqn-0518-01}
\end{equation}
Furthermore, 
\begin{equation}
\EE_{(0,i)} 
\lleft[ \sum_{n=0}^{\tau_0^+ - 1} r_{\phi}(n) \right] 
\le 
\EE_{(k,i)} \lleft[ \sum_{n=0}^{\tau_0^+ - 1} r_{\phi}(n) \right],
\qquad k \in \bbN,\ i \in \bbD,
\label{add-eqn-0518-02}
\end{equation}
which follows from the pathwise-ordered property \cite[Lemma
  A.1]{Masu15} of the block-monotone Markov chain
$\{(X_{\nu},J_{\nu})\}$.  Combining (\ref{add-eqn-0518-01}) and
(\ref{add-eqn-0518-02}), we have
\begin{equation}
\EE_{(k,i)} 
\lleft[ \sum_{n=0}^{\tau_0^+-1} r_{\phi}(n)\right] \le v(k \vee 1,i), 
\qquad (k,i) \in \bbF.
\label{eqn-36}
\end{equation}
Substituting (\ref{eqn-36}) into (\ref{eqn-35a}), we obtain
(\ref{bound-moment-tau(a)}).

Next, we prove (\ref{bound-moment-tau(b)}). It follows from
\cite[Theorem~10.4.9]{Meyn09} that, for any function
$\varphi(\cdot,\cdot)$ on $\bbF$ such that
$\sum_{(k,i)\in\bbF}\pi(k,i)|\varphi(k,i)| < \infty$,
\begin{equation}
\sum_{(k,i)\in\bbF}\pi(k,i)\varphi(k,i)
= \sum_{i\in\bbD}\pi(0,i)
\EE_{(0,i)} 
\lleft[ \sum_{\nu=0}^{\tau_0^+-1} \varphi(X_{\nu},J_{\nu}) \right].
\label{add-eqn-0517-01}
\end{equation}
Note here that $\EE_{\svc{\pi}}[r_{\phi}(\tau_0^+-1)] =
\EE_{\svc{\pi}} \lleft[ \sum_{n=0}^{\tau_0^+-1} \Delta r_{\phi}(n)
  \right]$, where $\Delta r_{\phi}(n) = r_{\phi}(n) - r_{\phi}(n-1)$
for $n\in\bbZ_+$.  Thus, letting
$\varphi(k,i)=\EE_{(k,i)}[\sum_{n=0}^{\tau_0^+-1} \Delta r_{\phi}(n)]$
in (\ref{add-eqn-0517-01}), we obtain
\begin{eqnarray}
\EE_{\svc{\pi}}[r_{\phi}(\tau_0^+-1)]
&=& 
\EE_{\svc{\pi}} 
\lleft[ \sum_{n=0}^{\tau_0^+-1} \Delta r_{\phi}(n) \right]
\nonumber
\\
&=& \sum_{i\in\bbD}\pi(0,i)
\EE_{(0,i)} 
\lleft[ \sum_{\nu=0}^{\tau_0^+-1}
\EE_{(X_{\nu},J_{\nu})}
\lleft[ 
\sum_{n=\nu}^{\tau_0^+(\nu)-1} \Delta r_{\phi}(n)
\right]
\right], \qquad
\label{add-eqn-0517-02}
\end{eqnarray}
where $\tau_0^+(\nu) = \inf\{n \ge \nu+1;X_n = 0\}$. Changing the
order of summation on the right hand side of (\ref{add-eqn-0517-02}),
we have
\begin{eqnarray}
\qquad
\lefteqn{
\EE_{\svc{\pi}}[r_{\phi}(\tau_0^+-1)]
}
\quad &&
\nonumber
\\
&=& \sum_{i\in\bbD}\pi(0,i)
\EE_{(0,i)} 
\lleft[ \sum_{n=0}^{\tau_0^+-1} 
\sum_{\nu=0}^n \Delta r_{\phi}(n)
\right]
\nonumber
\\
&=& \sum_{i\in\bbD}\pi(0,i)\EE_{(0,i)} 
\lleft[ \sum_{n=0}^{\tau_0^+-1} r_{\phi}(n) \right]
\le \sum_{i\in\bbD} \varpi(i)\EE_{(k,i)} 
\lleft[ \sum_{n=0}^{\tau_0^+-1} r_{\phi}(n)\right],
\quad k \in \bbZ_+,
\label{eqn-35b}
\end{eqnarray}
where the last inequality follows from (\ref{add-eqn-0518-02}) and
$\pi(0,i) \le \varpi(i)$ for $i \in \bbD$ (see
(\ref{eqn-varpi(i)})). Applying (\ref{eqn-36}) to (\ref{eqn-35b})
yields (\ref{bound-moment-tau(b)}).
\end{proof}

\medskip

Using Lemma~\ref{lem-bound-moment-tau}, we prove the following lemma.

\medskip

\begin{lemma}\label{lem-bounds-p^m-subgeo}
Suppose that $\vc{P} \in \sfBM_d$ and $\vc{P}$ is irreducible. If
Assumption~\ref{assumpt-poly} holds, then, for all $k \in \bbZ_+$ and
$m\in\bbN$,
\begin{eqnarray}
\left\|  \vc{p}^m(k,\vc{\varpi}) - \vc{\pi} \right\|
&\le& {4 \over r_{\phi}(m-1)} v(k \vee 1,\vc{\varpi}),
\label{eqn-17a}
\\
\left\|
\trunc{\vc{p}}_n^m(k,\vc{\varpi}) - \trunc{\vc{\pi}}_n  
\right\|
&\le& {4 \over r_{\phi}(m-1)} v(k \vee 1,\vc{\varpi}),
\qquad n \in \bbN,
\label{eqn-17b}
\end{eqnarray}
where the function $r_{\phi}$ is given in (\ref{defn-r_{phi}}).
\end{lemma}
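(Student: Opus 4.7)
The plan is to combine a coupling argument with Markov's inequality applied to the moment bounds supplied by Lemma~\ref{lem-bound-moment-tau}.

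For (\ref{eqn-17a}), I would set up two copies of the chain governed by $\vc{P}$ on a common probability space: chain $(1)$ with $X_0 = k$ and $J_0 \sim \vc{\varpi}$, so that its time-$m$ marginal is $\vc{p}^m(k,\vc{\varpi})$, and chain $(2)$ with $(X_0,J_0) \sim \vc{\pi}$, whose time-$m$ marginal remains $\vc{\pi}$. Because $\vc{P} \in \sfBM_d$, block monotonicity together with the finite (hence small) boundary block $\{0\} \times \bbD$ permits a coupling in which coalescence occurs shortly after both chains have visited level $0$, so the coupling time $T$ is bounded above by a functional of the first passage times $\tau_0^{+,(i)}$ ($i=1,2$) to level $0$. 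The coupling inequality then gives
\begin{equation*}
\|\vc{p}^m(k,\vc{\varpi}) - \vc{\pi}\| \le 2\PP(T > m) \le 2\PP(\tau_0^{+,(1)} > m) + 2\PP(\tau_0^{+,(2)} > m).
\end{equation*}

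Next, since $r_{\phi}$ is nondecreasing on $[0,\infty)$, Markov's inequality yields $\PP(\tau_0^+ > m) \le \EE[r_{\phi}(\tau_0^+ - 1)]/r_{\phi}(m-1)$. Averaging (\ref{bound-moment-tau(a)}) of Lemma~\ref{lem-bound-moment-tau} over $J_0 \sim \vc{\varpi}$ bounds the first chain's tail by $v(k \vmax 1,\vc{\varpi})/r_{\phi}(m-1)$, and (\ref{bound-moment-tau(b)}) bounds the second chain's tail by the same quantity. Summing and combining the two factors of $2$ produces the constant $4$ in (\ref{eqn-17a}).

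For (\ref{eqn-17b}), I would observe that $\trunc{\vc{P}}_n$ inherits the block monotonicity of $\vc{P}$ by \cite[Proposition 2.3]{Masu15}, and the drift condition transfers because $\vc{v} \in \sfBI_d$ together with $\trunc{\vc{P}}_n \prec_d \vc{P}$ forces $\trunc{\vc{P}}_n \vc{v} \le \vc{P}\vc{v} \le \vc{v} - \phi \circ \vc{v} + b\vc{1}_0$. Moreover, (\ref{eqn-varpi(i)}) ensures that the phase vector $\vc{\varpi}$ plays the same role for $\trunc{\vc{\pi}}_n$ as it does for $\vc{\pi}$. Re-running the coupling argument on the irreducible recurrent part of the truncated chain then yields (\ref{eqn-17b}).

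The principal technical obstacle is the coupling construction itself: since the boundary level contains $d$ distinct phases rather than a single atom, simultaneous presence at level $0$ does not by itself produce coalescence. Overcoming this requires either Nummelin-type splitting on the small set $\{0\} \times \bbD$ or a careful pathwise coupling that exploits the block-monotone ordering of \cite[Lemma A.1]{Masu15}. Once the coupling time is controlled by the level-$0$ hitting times, the remaining steps are routine invocations of Lemma~\ref{lem-bound-moment-tau}.
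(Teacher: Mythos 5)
Your overall strategy---coupling via block-monotone pathwise ordering, Markov's inequality applied to $r_{\phi}(\tau_0^+-1)$, then Lemma~\ref{lem-bound-moment-tau}---is the same one the paper uses; the paper simply delegates the first step to the derivation of \cite[Eq.~(3.22)]{Masu15}, which yields
\[
\|\vc{p}^m(k,\vc{\varpi}) - \vc{\pi}\|
\le 2\sum_{i\in\bbD}\varpi(i)\PP_{(k,i)}(\tau_0 > m)
 + 2\sum_{(\ell,j)\in\bbF}\pi(\ell,j)\PP_{(\ell,j)}(\tau_0 > m),
\]
after which the argument is routine. The one point your write-up glosses is precisely the one you flag as the ``principal technical obstacle,'' and it affects the displayed chain $\|\vc{p}^m(k,\vc{\varpi}) - \vc{\pi}\| \le 2\PP(T>m) \le 2\PP(\tau_0^{+,(1)}>m) + 2\PP(\tau_0^{+,(2)}>m)$: for a \emph{direct} two-chain coupling of $(k,\vc{\varpi})$ against $\vc{\pi}$ that inequality is not justified, since simultaneous arrival at level $0$ does not force phase agreement. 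What rescues it---and what the cited argument actually does---is a triangle inequality through the intermediate law $\vc{p}^m(0,\vc{\varpi})$. Because $\vc{\Psi}=\sum_{m}\vc{P}(k;m)$ is independent of $k$ under block monotonicity, $\{J_\nu\}$ is autonomously Markov with kernel $\vc{\Psi}$; since both $\vc{p}^m(k,\vc{\varpi})$ and $\vc{\pi}$ have initial phase marginal $\vc{\varpi}$ (by (\ref{eqn-varpi(i)})), in each of the two pairwise couplings with the intermediate chain one may run the two phase sequences identically. The block-increasing pathwise ordering of \cite[Lemma~A.1]{Masu15} then keeps the level processes ordered, so coalescence happens exactly when the larger level hits $0$, and the two terms above fall out. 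With that detour spelled out your argument is sound; the transfer to (\ref{eqn-17b}) via $\trunc{\vc{P}}_n \prec_d \vc{P}$, $\vc{v}\in\sfBI_d$, and the inherited drift inequality matches the paper.
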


\medskip

\begin{proof}
We first prove (\ref{eqn-17a}). Let $\tau_0 =
\inf\{n\in\bbZ_+;X_n=0\}$ and $\PP_{(k,i)}(\,\cdot\,) =
\PP(\,\cdot\mid X_0=k,J_0=i)$ for $(k,i) \in \bbF$. Following the
derivation of \cite[Eq.~(3.22)]{Masu15} and replacing $\vc{v}$ by
$\vc{e}$, we have
\begin{eqnarray}
\left\| \vc{p}^m(k,\vc{\varpi}) - \vc{\pi}  \right\|
&\le& 2\sum_{i\in\bbD}\varpi(i)\PP_{(k,i)}(\tau_0 > m)
 + 2\sum_{(\ell,j)\in\bbF}\pi(\ell,j)\PP_{(\ell,j)}(\tau_0 > m)
\nonumber
\\ 
&\le& 2\sum_{i\in\bbD}\varpi(i)\PP_{(k,i)}(\tau_0^+ > m)
+ 2\sum_{(\ell,j)\in\bbF}\pi(\ell,j)\PP_{(\ell,j)}(\tau_0^+ > m),
\label{eqn-09}
\end{eqnarray}
where the second inequality follows from the fact that
$\PP_{(\ell,j)}(\tau_0 > m) \le \PP_{(\ell,j)}(\tau_0^+ > m)$ for $m
\in \bbN$ and $(\ell,j)\in\bbF$.  Using Markov's inequality, we have
\[
\PP_{(\ell,j)}(\tau_0^+ > m) 
\le {1 \over r_{\phi}(m-1)}\EE_{(\ell,j)}[r_{\phi}(\tau_0^+-1)],
\qquad  m \in \bbN,\ (\ell,j)\in\bbF.
\]
Thus
\begin{eqnarray}
\qquad\quad
\left\| \vc{p}^m(k,\vc{\varpi}) - \vc{\pi} \right\|
&\le& {2 \over r_{\phi}(m-1)}
\left(
\sum_{i\in\bbD}\varpi(i)\EE_{(k,i)}[r_{\phi}(\tau_0^+-1)]
+ \EE_{\svc{\pi}}[r_{\phi}(\tau_0^+-1)]
\right).
\label{eqn-18}
\end{eqnarray}
Applying Lemma~\ref{lem-bound-moment-tau} to (\ref{eqn-18}) yields
(\ref{eqn-17a}). In addition, since $\trunc{\vc{P}}_n \prec_d \vc{P}$
and $\vc{v} \in \sfBI_d$, it follows from (\ref{ineqn-Pv-02}) and
\cite[Remark~2.1]{Masu15} that
\begin{equation}
\trunc{\vc{P}}_n\vc{v}
\le \vc{P}\vc{v} \le \vc{v} - \phi \circ \vc{v} + b \vc{1}_0.
\label{ineqn-trunc{p}_n*v}
\end{equation}
Therefore, (\ref{eqn-17b}) can be proved in the same way as that of
(\ref{eqn-17a}). The details are omitted.
\end{proof}

\medskip

Theorem~\ref{thm-main-subgeo} below presents upper bounds for $\|
\trunc{\vc{\pi}}_n - \vc{\pi} \|$, which are the main result of this
paper.

\medskip

\begin{theorem}\label{thm-main-subgeo}
Suppose that $\vc{P} \in \sfBM_d$ and $\vc{P}$ is irreducible. If
Assumption~\ref{assumpt-poly} holds, then, for all $n,m \in \bbN$,
\begin{eqnarray}
\left\| \trunc{\vc{\pi}}_n - \vc{\pi} \right\|
&\le&
{8 \over r_{\phi}(m-1)} v(1,\vc{\varpi})
+ 2m \sum_{i\in\bbD}\trunc{\pi}_n(n,i),
\label{error-bound-subgeo-01}
\\
\left\| \trunc{\vc{\pi}}_n - \vc{\pi} \right\|
&\le&
{8 \over r_{\phi}(m-1)} v(1,\vc{\varpi})
+ 2mb \sum_{i\in\bbD} {1 \over \phi \circ v(n,i)}, 
\label{error-bound-subgeo-02}
\end{eqnarray}
where the function $r_{\phi}$ is given in (\ref{defn-r_{phi}}).
\end{theorem}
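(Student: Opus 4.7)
The plan is to combine the three-term split (\ref{add-eqn-14}) with the a~priori bound (\ref{estimation-01}) on the discrepancy between $\trunc{\vc{P}}_n^m$ and $\vc{P}^m$ at the stationary input $\vc{\varpi}$, and control the two remaining terms via Lemma~\ref{lem-bounds-p^m-subgeo} applied at $k=0$. Since $0\vee 1 = 1$, Lemma~\ref{lem-bounds-p^m-subgeo} yields
\[
\left\| \vc{p}^m(0,\vc{\varpi}) - \vc{\pi} \right\|
\;\le\; \frac{4}{r_{\phi}(m-1)}\,v(1,\vc{\varpi}),
\qquad
\left\| \trunc{\vc{p}}_n^m(0,\vc{\varpi}) - \trunc{\vc{\pi}}_n \right\|
\;\le\; \frac{4}{r_{\phi}(m-1)}\,v(1,\vc{\varpi}).
\]
Substituting these together with (\ref{estimation-01}) into (\ref{add-eqn-14}) produces (\ref{error-bound-subgeo-01}) essentially by inspection; this already establishes the first claimed bound.

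To obtain (\ref{error-bound-subgeo-02}) it suffices to upgrade the tail quantity $\sum_{i\in\bbD}\trunc{\pi}_n(n,i)$ to something expressed through $\phi\circ\vc{v}$. I would use the truncated drift inequality (\ref{ineqn-trunc{p}_n*v}), namely
\[
\trunc{\vc{P}}_n\vc{v}\;\le\;\vc{v}-\phi\circ\vc{v}+b\vc{1}_0,
\]
and multiply it from the left by $\trunc{\vc{\pi}}_n$. Because $\trunc{\vc{\pi}}_n$ is supported on the finite set $\bbF^{\leqslant n}$ (the complement being transient under $\trunc{\vc{P}}_n$ by (\ref{add-eqn-22})), the scalar $\trunc{\vc{\pi}}_n\vc{v}$ is finite, so stationarity $\trunc{\vc{\pi}}_n\trunc{\vc{P}}_n=\trunc{\vc{\pi}}_n$ legitimately cancels the $\trunc{\vc{\pi}}_n\vc{v}$ terms and gives
\[
\trunc{\vc{\pi}}_n(\phi\circ\vc{v})\;\le\; b\,\trunc{\vc{\pi}}_n\vc{1}_0\;\le\; b.
\]
In particular, for each fixed $i\in\bbD$, $\trunc{\pi}_n(n,i)\,\phi\circ v(n,i)\le b$, whence $\trunc{\pi}_n(n,i)\le b/\phi\circ v(n,i)$. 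Summing over $i$ and plugging into the first bound (\ref{error-bound-subgeo-01}) delivers (\ref{error-bound-subgeo-02}).

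The main obstacle, and really the only non-mechanical point, is justifying that cancellation in the weighted balance equation: one must check that $\trunc{\vc{\pi}}_n\vc{v}<\infty$ so that subtracting it on both sides is legitimate. This is straightforward because the block-triangular structure (\ref{add-eqn-22}) forces $\trunc{\pi}_n(k,i)=0$ for $k>n$, but it is worth mentioning explicitly. Everything else is bookkeeping: assembling the three pieces, specialising Lemma~\ref{lem-bounds-p^m-subgeo} at $k=0$, and using the trivial pointwise estimate from the summed bound $\trunc{\vc{\pi}}_n(\phi\circ\vc{v})\le b$.
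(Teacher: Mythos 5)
Your proof is correct and follows the same route as the paper: split via \eqref{add-eqn-14}, bound the first two terms by Lemma~\ref{lem-bounds-p^m-subgeo} at $k=0$ together with \eqref{estimation-01} to get \eqref{error-bound-subgeo-01}, then pre-multiply the truncated drift inequality \eqref{ineqn-trunc{p}_n*v} by $\trunc{\vc{\pi}}_n$ to obtain $\trunc{\pi}_n(n,i)\le b/\phi\circ v(n,i)$ and hence \eqref{error-bound-subgeo-02}. Your explicit remark that $\trunc{\vc{\pi}}_n\vc{v}<\infty$ (so the cancellation in the stationarity equation is legitimate) is a small but worthwhile addition not spelled out in the paper.
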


\medskip

\begin{proof}
 Substituting (\ref{estimation-01}), (\ref{eqn-17a}) and
 (\ref{eqn-17b}) into (\ref{add-eqn-14}), we obtain
 (\ref{error-bound-subgeo-01}).  Furthermore, pre-multiplying both
 sides of (\ref{ineqn-trunc{p}_n*v}) by $\trunc{\vc{\pi}}_n$, we have
\[
\sum_{(k,i)\in\bbF}\trunc{\pi}_n(k,i) \cdot \phi \circ v(k,i) 
\le  b,
\]
which leads to
\begin{equation}
\trunc{\pi}_n(n,i) \le {b \over \phi \circ v(n,i)}. 
\label{add-eqn-0519-01}
\end{equation}
Combining (\ref{error-bound-subgeo-01}) and (\ref{add-eqn-0519-01}) 
yields (\ref{error-bound-subgeo-02}). 
\end{proof}

\medskip

\begin{rem}\label{rem-main-thm}
Suppose that $\lim_{t\to\infty}\phi(t) = \infty$ and
$\lim_{k\to\infty}v(k,i) = \infty$ for all $i\in\bbD$. It then holds
that $\lim_{n\to\infty}\phi \circ v(n,i) = \infty$ for
$i\in\bbD$. Thus, it follows from (\ref{add-eqn-0519-01}) that
$\lim_{n\to\infty}\trunc{\pi}_n(n,i) = 0$ for $i\in\bbD$. Recall here
that $\lim_{x\to\infty}H_{\phi}^{-1}(x) = \infty$. Applying this and
$\lim_{t\to\infty}\phi(t) = \infty$ to (\ref{defn-r_{phi}}) yields
$\lim_{x\to\infty}r_{\phi}(x) = \infty$. Therefore, we can reduce the
error bounds (\ref{error-bound-subgeo-01}) and
(\ref{error-bound-subgeo-02}) to any desired value less than two by
fixing $m \in \bbN$ sufficient large and then choosing $n \in \bbN$
large such that the bounds take the desired value.
\end{rem}

\section{Extension of the main result}\label{sec-extension}

In this section, we extend Theorem~\ref{thm-main-subgeo} to the case
where $\vc{P}$ may not be block monotone but is block-wise dominated
by a block-monotone stochastic matrix
$\widetilde{\vc{P}}=(\widetilde{p}(k,i;\ell,j))_{(k,i),(\ell,j)\in\bbF}$. Note
here that $\widetilde{\vc{P}} \in \sfBM_d$ is allowed to be equal to
$\vc{P}$ and thus $\vc{P} \in \sfBM_d$.

Let
$\widetilde{\vc{P}}^m:=(\widetilde{p}^m(k,i;\ell,j))_{(k,i),(l,j)\in\bbF}$,
$m\in\bbN$ denote the $m$th power of $\widetilde{\vc{P}}$. For $k,\ell
\in \bbZ_+$, let
$\widetilde{\vc{P}}^m(k;\ell):=(\widetilde{p}^m(k,i;\ell,j))_{i,j\in\bbD}$
denote the $(k,\ell)$th block of $\widetilde{\vc{P}}^m$. We then have
the following result.

\medskip

\begin{theorem}\label{thm-extended-geo2}
Suppose that (i) $\vc{P} \prec_d \widetilde{\vc{P}} \in \sfBM_d$ and
$\widetilde{\vc{P}}$ is irreducible; and that (ii) there exist a
constant $b \in (0, \infty)$, a column vector
$\vc{v}=(v(k,i))_{(k,i)\in\bbF} \in \sfBI_d$ with $\vc{v} \ge \vc{e}$
and a nondecreasing differentiable concave function $\phi:[1,\infty)
  \to (0,\infty)$ with $\lim_{t\to\infty}\phi'(t) = 0$ such that, for
  some $M\in\bbN$ and $K \in \bbZ_+$,
\begin{eqnarray}
\widetilde{\vc{P}}^M \vc{v}
&\le& \vc{v} - \phi \circ \vc{v}
+ b\vc{1}_K,
\label{ineqn-hat{P}-widetilde{v}}
\\
\widetilde{\vc{P}}^M(K;0)\vc{e} &>& \vc{0}.
\label{ineqn-hat{P}(K,0)e}
\end{eqnarray}
Let $B$ denote a positive constant such that
\begin{eqnarray}
B\cdot\widetilde{\vc{P}}^M(K;0)\vc{e} \ge b\vc{e}.
\label{defn-B}
\end{eqnarray}
Under these conditions, the following bound holds for all
$m,n\in\bbN$.
\begin{eqnarray}
\left\| \trunc{\vc{\pi}}_n - \vc{\pi} \right\|
&\le&
{
8\{ c_{\phi,B}(1) \}^{-1} 
\over r_{\phi} \circ c_{\phi,B}(m-1)
} 
\{ v(1,\vc{\varpi}) + B\}
+ 2mMb \sum_{i\in\bbD}{1 \over \phi \circ v(n,i)},
\label{bound-pi-04}
\end{eqnarray}
where 
\begin{equation}
c_{\phi,B}(x)
= {\phi(1) \over \phi(B+1)} x,
\qquad x \ge 0.
\label{defn-c_{phi,B}}
\end{equation}
In addition, if $K=0$, then, for all $m,n\in\bbN$, 
\begin{eqnarray}
\left\| \trunc{\vc{\pi}}_n - \vc{\pi} \right\|
&\le&
{
8 
\over r_{\phi}(m-1)
} 
v(1,\vc{\varpi})
+ 2mMb \sum_{i\in\bbD}{1 \over \phi \circ v(n,i)},
\label{bound-pi-K=0}
\end{eqnarray}
which holds without (\ref{ineqn-hat{P}(K,0)e}).
\end{theorem}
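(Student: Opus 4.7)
The plan is to mimic the proof of Theorem~\ref{thm-main-subgeo} applied to the $M$-step chain $\widetilde{\vc{P}}^M$, and transfer the resulting bounds to $\vc{P}$ via the block-wise dominance $\vc{P}\prec_d\widetilde{\vc{P}}$. Since $\widetilde{\vc{P}}\in\sfBM_d$ implies $\widetilde{\vc{P}}^M\in\sfBM_d$, and since $\vc{P}\prec_d\widetilde{\vc{P}}$ propagates (via the monotonicity of block-monotone multiplication) to $\vc{P}^M\prec_d\widetilde{\vc{P}}^M$ and $\trunc{\vc{P}}_n^M\prec_d\widetilde{\vc{P}}^M$, the drift hypothesis together with $\vc{v}\in\sfBI_d$ gives $\trunc{\vc{P}}_n^M\vc{v}\le\vc{v}-\phi\circ\vc{v}+b\vc{1}_K$. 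Pre-multiplying by $\trunc{\vc{\pi}}_n$ and using $\vc{1}_K\le\vc{e}$ yields $\trunc{\pi}_n(n,i)\le b/(\phi\circ v(n,i))$, exactly as in (\ref{add-eqn-0519-01}). Invoking (\ref{add-eqn-14}) with $m$ replaced by $mM$, the third term is bounded by $2mM\sum_{i\in\bbD}\trunc{\pi}_n(n,i)$, producing the second summand on the right-hand side of both (\ref{bound-pi-04}) and (\ref{bound-pi-K=0}).

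\textbf{Absorbing the boundary.} For the first two terms in (\ref{add-eqn-14}), I need a Lemma~\ref{lem-bound-moment-tau}-type estimate for $\widetilde{\vc{P}}^M$. When $K=0$, (\ref{ineqn-hat{P}-widetilde{v}}) is already of Assumption~\ref{assumpt-poly}'s form on $\widetilde{\vc{P}}^M$, and the proof of Lemma~\ref{lem-bound-moment-tau} applies verbatim. For general $K$, I would use (\ref{defn-B}) together with the block monotonicity of $\widetilde{\vc{P}}^M$ (which makes $\widetilde{\vc{P}}^M(k;0)\vc{e}$ non-increasing in $k$) to derive the pointwise inequality $b\vc{1}_K\le B\widetilde{\vc{P}}^M\vc{1}_0$. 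Setting $\vc{w}=\vc{v}+B(\vc{e}-\vc{1}_0)$, so that $\vc{w}\in\sfBI_d$, $\vc{w}\ge\vc{e}$, $w(k,i)=v(k,i)+B$ for $k\ge 1$, and $w(0,i)=v(0,i)$, a short manipulation of (\ref{ineqn-hat{P}-widetilde{v}}) yields $\widetilde{\vc{P}}^M\vc{w}\le\vc{w}-\phi\circ\vc{v}+B\vc{1}_0$. Since the concave positive function $\phi$ is log-concave on $[1,\infty)$, the ratio $\phi(v+B)/\phi(v)$ is non-increasing in $v\ge 1$, so $\phi\circ\vc{v}\ge c_{\phi,B}(1)\,\phi\circ\vc{w}$ pointwise. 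Setting $\phi^*:=c_{\phi,B}(1)\,\phi$, this produces the standard drift condition
\[
\widetilde{\vc{P}}^M\vc{w}\le\vc{w}-\phi^*\circ\vc{w}+B\vc{1}_0,
\]
which is of Assumption~\ref{assumpt-poly}'s form with $(\vc{v},\phi,b)$ replaced by $(\vc{w},\phi^*,B)$.

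\textbf{Moment bounds, coupling, and main obstacle.} Applying Lemma~\ref{lem-bound-moment-tau} to $\widetilde{\vc{P}}^M$ with this new drift, and writing $\widetilde{\tau}_0^{+,(M)}$ for the return time of $\widetilde{\vc{P}}^M$ to level $0$, gives $\EE_{(k,i)}[r_{\phi^*}(\widetilde{\tau}_0^{+,(M)}-1)]\le w(k\vee 1,i)\le v(k\vee 1,i)+B$. A direct computation with (\ref{defn-H_{phi}}) and (\ref{defn-r_{phi}}) shows that the rescaling $\phi^*=c_{\phi,B}(1)\,\phi$ yields $r_{\phi^*}(y)=c_{\phi,B}(1)\,r_\phi\circ c_{\phi,B}(y)$, matching the denominator in (\ref{bound-pi-04}). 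The pathwise-ordered coupling coming from $\vc{P}\prec_d\widetilde{\vc{P}}$ gives $\{\tau_0^+>mM\}\subseteq\{\widetilde{\tau}_0^{+,(M)}>m\}$; Markov's inequality combined with the line of argument of (\ref{eqn-09})--(\ref{eqn-18}) then bounds the first two terms of (\ref{add-eqn-14}) by $8[c_{\phi,B}(1)]^{-1}(v(1,\vc{\varpi})+B)/(r_\phi\circ c_{\phi,B}(m-1))$, completing (\ref{bound-pi-04}). The case $K=0$ collapses with $B=0$, $\vc{w}=\vc{v}$, $\phi^*=\phi$, yielding (\ref{bound-pi-K=0}) without any need for (\ref{ineqn-hat{P}(K,0)e}). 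I expect the main obstacle to be the absorption step: choosing $\vc{w}$ so that it is simultaneously block-increasing, bounded below by $\vc{e}$, and produces a drift inequality whose boundary term is supported only on level $0$, while controlling $\phi\circ\vc{v}$ from below by a constant multiple of $\phi\circ\vc{w}$ through log-concavity so that the explicit constant $c_{\phi,B}(1)$ matches the form of (\ref{bound-pi-04}).
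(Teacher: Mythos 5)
Your plan follows essentially the same route as the paper: decompose $\|\trunc{\vc{\pi}}_n-\vc{\pi}\|$ into three terms at horizon $mM$, control the third term by a drift bound on a truncated chain, and control the first two via a moment/Markov-inequality argument on the $M$-skeleton of $\widetilde{\vc{P}}$, reducing the general $K\ge1$ case to $K=0$ by constructing a modified drift with boundary term supported only on level $0$. Your absorption step is in fact algebraically cleaner than the paper's: you establish the pointwise inequality $b\vc{1}_K\le B\widetilde{\vc{P}}^M\vc{1}_0$ from (\ref{defn-B}) and block monotonicity, and deduce $\widetilde{\vc{P}}^M\vc{w}\le\vc{w}-\phi\circ\vc{v}+B\vc{1}_0$ in a single vector computation, whereas the paper argues level-by-level for $k=0$, $1\le k\le K$, $k>K$ and only reaches the weaker boundary constant $\widetilde{b}=b+B$. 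The log-concavity step $\phi\circ\vc{v}\ge c_{\phi,B}(1)\,\phi\circ\vc{w}$ and the rescaling $r_{\phi^*}(y)=c_{\phi,B}(1)\,r_\phi\circ c_{\phi,B}(y)$ both reproduce the paper's (\ref{defn-phi})--(\ref{eqn-r_{wt{phi}}(x)}), and the final numerator $w(1,\vc{\varpi})=v(1,\vc{\varpi})+B$ matches (\ref{bound-pi-04}).

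There is, however, a genuine gap in the third-term estimate. You invoke (\ref{add-eqn-14}) and (\ref{estimation-01}) with $m\mapsto mM$ to get $2mM\sum_{i\in\bbD}\trunc{\pi}_n(n,i)$. But (\ref{estimation-01}) is taken from the proof of Theorem~3.1 of \cite{Masu15}, which requires $\vc{P}\in\sfBM_d$; in Theorem~\ref{thm-extended-geo2} only $\vc{P}\prec_d\widetilde{\vc{P}}\in\sfBM_d$ is assumed and $\vc{P}$ itself need not be block monotone, so that coupling is not licensed. The correct estimate here, taken from Theorem~4.1 of \cite{Masu15}, is (\ref{add-eqn-17c}): $\|\trunc{\vc{p}}_n^{m}(0,\vc{\varpi})-\vc{p}^{m}(0,\vc{\varpi})\|\le2m\sum_{i\in\bbD}\trunc{\widetilde{\pi}}_n(n,i)$, involving the stationary vector $\trunc{\widetilde{\vc{\pi}}}_n$ of the LC-block-augmented truncation $\trunc{\widetilde{\vc{P}}}_n$ of the \emph{dominating} chain, not $\trunc{\vc{\pi}}_n$. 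The fix is then to note $\trunc{\widetilde{\vc{P}}}_n\prec_d\widetilde{\vc{P}}\in\sfBM_d$, hence $(\trunc{\widetilde{\vc{P}}}_n)^M\vc{v}\le\widetilde{\vc{P}}^M\vc{v}\le\vc{v}-\phi\circ\vc{v}+b\vc{1}_K$, and pre-multiply by $\trunc{\widetilde{\vc{\pi}}}_n$ to obtain $\trunc{\widetilde{\pi}}_n(n,i)\le b/(\phi\circ v(n,i))$. The final bound is numerically the same, but as written your argument appeals to a comparison between $\vc{P}$ and $\trunc{\vc{P}}_n$ that the hypotheses do not support; the whole point of introducing $\widetilde{\vc{P}}$ is that the coupling must be routed through the dominating block-monotone chain.
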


\medskip

\begin{rem}
Although the condition (\ref{ineqn-hat{P}(K,0)e}) ensures the
existence of a constant $B\in (0,\infty)$ satisfying (\ref{defn-B}),
it may seem that the condition (\ref{ineqn-hat{P}(K,0)e}) weakens the
applicability of Theorem~\ref{thm-extended-geo2}. However, that is not
necessarily the case.  To verify this, we assume that all the
conditions of Theorem~\ref{thm-extended-geo2} are satisfied, except
for (\ref{ineqn-hat{P}(K,0)e}), and that $\widetilde{\vc{P}}$ is
aperiodic. Note here that the aperiodicity of $\widetilde{\vc{P}}$
does not make any restriction because $(\vc{I}+\vc{P})/2$ and
$(\vc{I}+\widetilde{\vc{P}})/2$ are aperiodic and
\begin{eqnarray*}
\vc{\pi}(\vc{I}+\vc{P})/2 &=& \vc{\pi},
\qquad \qquad \quad 
\widetilde{\vc{\pi}}(\vc{I}+\widetilde{\vc{P}})/2 = \widetilde{\vc{\pi}},
\\
(\vc{I}+\vc{P})/2 &\prec_d& (\vc{I}+\widetilde{\vc{P}})/2,
\qquad (\vc{I}+\widetilde{\vc{P}})/2 \in \sfBM_d.
\end{eqnarray*}
We also assume the following mild condition. 
\begin{equation*}
\lim_{k\to\infty}
\phi \circ v(k,i) = \infty\quad \mbox{for all $i \in \bbD$}.
\end{equation*}
Under these conditions, let $\{b_m;m\in\bbN\}$ and $\{K_m;m\in\bbN\}$
denote sequences such that $b_1 = b$, $K_1=K$ and, for $m =2,3,\dots$,
\begin{eqnarray*}
b_m &=& \left\{
b+b_{m-1} - (1-\lambda)\min_{i\in\bbD}\phi \circ v(0,i)
\right\} \vmax 0,
\\
K_m &=& 
\inf\left\{
k\in\bbN; (b + b_{m-1}) \vc{e} 
- (1-\lambda) \phi \circ \vc{v}(k) \le \vc{0}
\right\} - 1,
\end{eqnarray*}
where $\lambda \in (0,1)$ is fixed arbitrarily.
It then follows from (\ref{ineqn-hat{P}-widetilde{v}}) that
\begin{eqnarray*}
\widetilde{\vc{P}}^{mM} \vc{v}
&\le& \vc{v} - \lambda \phi \circ \vc{v}
+ b_m \vc{1}_{K_m},\qquad m \in \bbN.
\end{eqnarray*}
In addition, the aperiodicity of $\widetilde{\vc{P}}$ implies that for
any $m \in\bbN$ there exists some $\nu_0(K_m)\in\bbN$ such that
$\widetilde{\vc{P}}^{\nu}(K_m;0)\vc{e} > \vc{0}$ for all $\nu \ge
\nu_0(K_m)$ (see, e.g., \cite[Chapter 2, Theorem 4.3]{Brem99}). As a
result, if there exists some $m\in\bbN$ such that $\nu_0(K_m) \le mM$,
then we can obtain a bound similar to (\ref{bound-pi-04}) by using
Theorem~\ref{thm-extended-geo2}.
\end{rem}

\medskip

{\it Proof of Theorem~\ref{thm-extended-geo2}}.~  Let
$\{(\widetilde{X}_{\nu},\widetilde{J}_{\nu});\nu\in\bbZ_+\}$ denote a
block-monotone Markov chain with state space $\bbF$ and transition
probability matrix $\widetilde{\vc{P}}$. Conditions (i) and (ii) imply
that $\widetilde{\vc{P}}$ is irreducible and positive recurrent and
thus has the unique stationary probability vector, denoted by
$\widetilde{\vc{\pi}}=(\widetilde{\pi}(k,i))_{(k,i)\in\bbF}$.  It
follows from $\vc{P} \prec_d \widetilde{\vc{P}} \in \sfBM_d$ and
\cite[Proposition~2.3]{Masu15} that
$\vc{\pi}\prec_d\widetilde{\vc{\pi}}$ and thus
\begin{equation}
\sum_{k=0}^{\infty} \widetilde{\pi}(k,i)
= \sum_{k=0}^{\infty} \pi(k,i)
= \varpi(i),
\qquad i \in \bbD.
\label{eqn-tilde{J}=J}
\end{equation}

Following the last part of the proof of \cite[Theorem~4.1]{Masu15},
we have
\begin{equation}
\left\| \trunc{\vc{p}}_n^m(0,\vc{\varpi}) - \vc{p}^m(0,\vc{\varpi}) \right\|
\le 2m \sum_{i\in\bbD} \trunc{\widetilde{\pi}}_n(n,i),
\qquad m \in \bbN.
\label{add-eqn-17c}
\end{equation}
It follows from (\ref{add-eqn-17c}) and the triangle inequality
that, for $m \in \bbN$,
\begin{eqnarray}
\left\| \trunc{\vc{\pi}}_n - \vc{\pi} \right\|
&\le& \left\| \vc{p}^{mM}(0,\vc{\varpi}) - \vc{\pi} \right\|
+ \left\| \trunc{\vc{p}}_n^{mM}(0,\vc{\varpi}) - \trunc{\vc{\pi}}_n \right\|
\nonumber
\\
&& {} \quad 
+ \left\| \trunc{\vc{p}}_n^{mM}(0,\vc{\varpi}) - \vc{p}^{mM}(0,\vc{\varpi}) \right\|
\nonumber
\\
&\le& \left\| \vc{p}^{mM}(0,\vc{\varpi}) - \vc{\pi} \right\|
+ \left\| \trunc{\vc{p}}_n^{mM}(0,\vc{\varpi}) - \trunc{\vc{\pi}}_n \right\| 
+ 2mM \sum_{i\in\bbD} \trunc{\widetilde{\pi}}_n(n,i).
\label{bound-pi-02}
\end{eqnarray}

For $n \in \bbN$, let
$\trunc{\wt{\vc{P}}}_n:=(\trunc{\wt{p}}_n(k,i;\ell,j))_{(k,i),(\ell,j)\in\bbF}$
denote a stochastic matrix such that, for $i,j\in\bbD$,
\begin{equation*}
\trunc{\wt{p}}_n(k,i;\ell,j)
= 
\left\{
\begin{array}{ll}
\wt{p}(k,i;\ell,j), & k \in \bbZ_+,\ \ell = 0,1,\dots,n-1,
\\
\dm\sum_{m=n}^{\infty} \wt{p}(k,i;m,j), & k \in \bbZ_+,\ \ell = n,
\\
0, & \mbox{otherwise}.
\end{array}
\right.
\end{equation*}
Note that $\trunc{\wt{\vc{P}}}_n$ is the LC-block-augmented truncation
of $\wt{\vc{P}}$. Note also that $\trunc{\widetilde{\vc{P}}}_n \prec_d
\widetilde{\vc{P}} \in \sfBM_d$. Thus, it follows from
\cite[Proposition~2.3]{Masu15} that $(\trunc{\widetilde{\vc{P}}}_n)^M
\prec_d \widetilde{\vc{P}}^M$. Using this relation together with
$\vc{v} \in \sfBI_d$ and \cite[Remark~2.1]{Masu15}, we obtain
\begin{equation}
(\trunc{\widetilde{\vc{P}}}_n)^M\vc{v} \le \widetilde{\vc{P}}^M\vc{v}.
\label{ineqn-(n)wt{P}_nv}
\end{equation}
Combining (\ref{ineqn-hat{P}-widetilde{v}}) and
(\ref{ineqn-(n)wt{P}_nv}), we have
\[
(\trunc{\widetilde{\vc{P}}}_n)^M \vc{v}
\le \widetilde{\vc{P}}^M \vc{v}
\le \vc{v} - \phi \circ \vc{v}
+ b\vc{1}_K.
\]
Pre-multiplying this inequality by $\trunc{\widetilde{\vc{\pi}}}_n$
yields
\begin{equation}
\trunc{\widetilde{\pi}}_n(n,i) 
\le {b \over \phi \circ v(n,i)},\qquad i \in \bbD.
\label{add-eqn-0519-03}
\end{equation}
Substituting (\ref{add-eqn-0519-03}) into
(\ref{bound-pi-02}) results in
\begin{eqnarray}
\left\| \trunc{\vc{\pi}}_n - \vc{\pi} \right\|
&\le& \left\| \vc{p}^{mM}(0,\vc{\varpi}) - \vc{\pi} \right\|
+ \left\| \trunc{\vc{p}}_n^{mM}(0,\vc{\varpi}) - \trunc{\vc{\pi}}_n \right\|
\nonumber
\\
&& {} \quad 
+ 2mMb \sum_{i\in\bbD}{1 \over \phi \circ v(n,i)}.
\label{bound-pi-03}
\end{eqnarray}

We now define
$\{(\widetilde{X}_{\nu}^{(M)},\widetilde{J}_{\nu}^{(M)});\nu\in\bbZ_+\}$
as the $M$-skeleton of the Markov chain
$\{(\widetilde{X}_{\nu},\widetilde{J}_{\nu});\nu\in\bbZ_+\}$ with
transition probability matrix $\widetilde{\vc{P}}$, i.e.,
\[
\widetilde{X}_{\nu}^{(M)} = \widetilde{X}_{\nu M},\qquad 
\widetilde{J}_{\nu}^{(M)} = \widetilde{J}_{\nu M},
\qquad \nu \in \bbZ_+.
\]
Clearly, the $M$-skeleton
$\{(\widetilde{X}_{\nu}^{(M)},\widetilde{J}_{\nu}^{(M)})\}$ evolves
according to $\widetilde{\vc{P}}^M$. We also define
$\widetilde{\tau}_0^{(M)+}=\inf\{\nu\in\bbN;\widetilde{X}_{\nu}^{(M)}
= \widetilde{X}_{\nu M}=0\}$. Proceeding as in the derivation of the
first inequality for $\|\vc{p}^{m}(k,\vc{\varpi}) - \vc{\pi}
\|_{\svc{v}}$ at page 97 of \cite{Masu15} (replacing $\vc{v}$ by
$\vc{e}$ as in the derivation of (\ref{eqn-09})) and using
(\ref{eqn-tilde{J}=J}), we obtain
\begin{eqnarray}
\left\|\vc{p}^{mM}(k,\vc{\varpi}) - \vc{\pi}\right\|
&\le& 2\sum_{i\in\bbD}\varpi(i)
\wt{\PP}_{(k,i)}\big( \widetilde{\tau}_0^{(M)+} > m\big)
\nonumber
\\
&& {} 
 + 2\sum_{(\ell,j)\in\bbF}\widetilde{\pi}(\ell,j)
\wt{\PP}_{(\ell,j)}\big(\widetilde{\tau}_0^{(M)+} > m\big),
\qquad k \in \bbZ_+,\ m \in \bbN,
\label{eqn-21}
\end{eqnarray}
where $\wt{\PP}_{(k,i)}(\,\cdot\,) = \PP(\,\cdot\mid
\wt{X}_0=k,\wt{J}_0=i)$ for $(k,i) \in \bbF$. In addition, by Markov's
inequality,
\begin{equation}
\wt{\PP}_{(\ell,j)}\big(\wt{\tau}_0^{(M)+} > m\big) 
\le {1 \over r_{\phi}(m-1)}
\wt{\EE}_{(\ell,j)}\big[r_{\phi}\big(\wt{\tau}_0^{(M)+} - 1\big)\big],
\quad  m \in \bbN,\ (\ell,j)\in\bbF,
\label{ineqn-PP_{(l,j)}(tau_0^{(M)+}>m)}
\end{equation}
where
\begin{eqnarray*}
\wt{\EE}_{(\ell,j)}[\,\cdot\,] 
&=& \wt{\EE}[\,\cdot \mid \wt{X}_0 = \ell,\wt{J}_0=j],
\qquad (\ell,j) \in \bbF.
\end{eqnarray*}
Substituting (\ref{ineqn-PP_{(l,j)}(tau_0^{(M)+}>m)}) into
(\ref{eqn-21}), we have, for $m \in \bbN$,
\begin{eqnarray}
\lefteqn{
\left\|\vc{p}^{mM}(k,\vc{\varpi}) - \vc{\pi}\right\|
}
\quad &&
\nonumber
\\
&\le& {2 \over r_{\phi}(m-1)}
\left( 
\sum_{i\in\bbD}\varpi(i)
\wt{\EE}_{(k,i)}\big[r_{\phi}\big(\wt{\tau}_0^{(M)+} - 1 \big) \big]
+ 
\wt{\EE}_{\wt{\svc{\pi}}}\big[r_{\phi}\big(\wt{\tau}_0^{(M)+} - 1\big) \big]
\right),
\label{eqn-160303-01}
\end{eqnarray}
where
\begin{eqnarray*}
\wt{\EE}_{\wt{\svc{\pi}}}[\,\cdot\,] 
&=& \sum_{(k,i)\in\bbF}\wt{\pi}(k,i)\wt{\EE}_{(k,i)}[\,\cdot\,].
\end{eqnarray*}
It should be noted that if $K=0$ then Lemma~\ref{lem-bound-moment-tau}
yields
\begin{eqnarray}
\wt{\EE}_{(k,i)}\big[r_{\phi}\big(\wt{\tau}_0^{(M)+} - 1 \big) \big]
&\le& v(k \vee 1,i), \qquad (k,i) \in \bbF, 
\label{bound-moment-tau^{(M)+}-a-(K=0)}
\\
\wt{\EE}_{\wt{\svc{\pi}}}\big[r_{\phi}\big(\wt{\tau}_0^{(M)+} - 1\big) \big]
&\le& v(k \vee 1,\vc{\varpi}), \qquad  ~~\,k \in \bbZ_+.
\label{bound-moment-tau^{(M)+}-b-(K=0)}
\end{eqnarray}
Combining (\ref{eqn-160303-01}) with
(\ref{bound-moment-tau^{(M)+}-a-(K=0)}) and
(\ref{bound-moment-tau^{(M)+}-b-(K=0)}), we obtain
\begin{eqnarray}
\left\|  \vc{p}^{mM}(k,\vc{\varpi}) - \vc{\pi} \right\|
&\le& {4 \over r_{\phi}(m-1)} v(k \vee 1,\vc{\varpi}),
\qquad m \in \bbN.
\label{add-eqn-17a}
\end{eqnarray}
Similarly, we have
\begin{eqnarray}
\left\|
\trunc{\vc{p}}_n^{mM}(k,\vc{\varpi}) - \trunc{\vc{\pi}}_n  
\right\|
&\le& {4 \over r_{\phi}(m-1)} v(k \vee 1,\vc{\varpi}),
\qquad m,n \in \bbN.
\label{add-eqn-17b}
\end{eqnarray}
Substituting (\ref{add-eqn-17a}) and (\ref{add-eqn-17b}) into
(\ref{bound-pi-03}) yields the bound (\ref{bound-pi-K=0}) in the
special case where $K=0$.

In what follows, we consider the general case, where $K \in \bbN$. 
In fact, the following inequality holds (which is proved later):
\begin{equation}
\widetilde{\vc{P}}^M \widetilde{\vc{v}}
\le \widetilde{\vc{v}} - \widetilde{\phi} \circ \widetilde{\vc{v}} 
+ \widetilde{b}\vc{1}_0,
\label{ineqn-hat{P}v-02}
\end{equation}
where 
\begin{eqnarray}
\widetilde{\phi}(x)
&=& {\phi(1) \over \phi(B+1)}
\phi(x),\qquad x \ge 1,
\label{defn-phi}
\\
\widetilde{b} &=& b + B,
\label{defn-b}
\\
\widetilde{v}(k,i) &=& 
\left\{
\begin{array}{ll}
v(0,i), & k=0,~i\in\bbD,
\\
v(k,i) + B, & k\in\bbN,~i\in\bbD.
\end{array}
\right.
\label{defn-widetilde{v}}
\end{eqnarray}
Since $\widetilde{\vc{v}} \in \sfBI_d$ and
$(\trunc{\widetilde{\vc{P}}}_n)^M \prec_d \widetilde{\vc{P}}^M$, we
have $(\trunc{\widetilde{\vc{P}}}_n)^M \widetilde{\vc{v}} \le
\widetilde{\vc{P}}^M \widetilde{\vc{v}}$ (see
\cite[Remark~2.1]{Masu15}).  It follows from this inequality and
(\ref{ineqn-hat{P}v-02}) that
\begin{eqnarray}
(\trunc{\widetilde{\vc{P}}}_n)^M \widetilde{\vc{v}}
\le 
\widetilde{\vc{P}}^M \widetilde{\vc{v}}
\le \widetilde{\vc{v}} - \widetilde{\phi} \circ \widetilde{\vc{v}} 
+ \widetilde{b}\vc{1}_0.
\label{drift-cond-wt{P}^M}
\end{eqnarray}
Note here that $\widetilde{\phi}:[1,\infty) \to (0,\infty)$ is a
  nondecreasing differentiable concave function such that
  $\lim_{t\to\infty}\widetilde{\phi}'(t) = 0$. The inequality
  (\ref{drift-cond-wt{P}^M}) implies that the case where $K \in \bbN$
  is reduced to the special case where $K=0$.

To follow the proof of the case where $K=0$, we define two functions
$H_{\wt{\phi}}$ and $r_{\wt{\phi}}$, which correspond to the functions
$H_{\phi}$ and $r_{\phi}$, respectively.  Let $H_{\wt{\phi}}$ denote a
function on $[1,\infty)$ such that
\begin{equation}
H_{\wt{\phi}}(x) = \int_1^x {\rmd y \over \wt{\phi}(y)}
= \{ c_{\phi,B}(1) \}^{-1} H_{\phi}(x),
\qquad x \ge 1,
\label{defn-H_{wt{phi}}}
\end{equation}
where the second equality results from (\ref{defn-H_{phi}}),
(\ref{defn-c_{phi,B}}) and (\ref{defn-phi}).  Furthermore, let
$r_{\wt{\phi}}$ denote a function on $[0,\infty)$ such that
\[
r_{\wt{\phi}}(x) = \wt{\phi} \circ H_{\wt{\phi}}^{-1}(x),
\qquad x \ge 0.
\]
We then have
\begin{eqnarray}
r_{\wt{\phi}}(x)
&=&  c_{\phi,B}(1) \cdot \phi \circ H_{\wt{\phi}}^{-1}(x)
\nonumber
\\
&=& c_{\phi,B}(1) \cdot \phi \circ H_{\phi}^{-1}(c_{\phi,B}(x))
\nonumber
\\
&=& c_{\phi,B}(1) \cdot r_{\phi} \circ c_{\phi,B}(x),
\qquad x \ge 0,
\label{eqn-r_{wt{phi}}(x)}
\end{eqnarray}
where the first, second and third equalities follows from
(\ref{defn-phi}), (\ref{defn-H_{wt{phi}}}) and (\ref{defn-r_{phi}}),
respectively. In addition, following the derivation of
(\ref{add-eqn-17a}) and using $\wt{\vc{v}}$ in
(\ref{defn-widetilde{v}}) and $r_{\wt{\phi}}$ in
(\ref{eqn-r_{wt{phi}}(x)}) (instead of $\vc{v}$ and $r_{\phi}$), we
obtain
\begin{eqnarray}
\left\|  \vc{p}^{mM}(k,\vc{\varpi}) - \vc{\pi} \right\|
&\le& {4 \over r_{\wt{\phi}}(m-1)} \widetilde{v}(k \vee 1,\vc{\varpi})
\nonumber
\\
&=&
{4\{ c_{\phi,B}(1) \}^{-1} \over r_{\phi} \circ c_{\phi,B}(m-1)} 
\{ v(k \vee 1,\vc{\varpi}) + B\},\qquad k \in \bbZ_+.
\label{eqn-17a-02}
\end{eqnarray}
Similarly, 
\begin{eqnarray}
\qquad
\left\|
\trunc{\vc{p}}_n^{mM}(k,\vc{\varpi}) - \trunc{\vc{\pi}}_n  
\right\|
&\le&  
{4\{ c_{\phi,B}(1) \}^{-1} \over r_{\phi} \circ c_{\phi,B}(m-1)} 
\{ v(k \vee 1,\vc{\varpi}) + B\},\qquad k \in \bbZ_+.
\label{eqn-17b-02}
\end{eqnarray}
Substituting (\ref{eqn-17a-02}) and (\ref{eqn-17b-02}) into
(\ref{bound-pi-03}) yields (\ref{bound-pi-04}).

It remains to prove that (\ref{ineqn-hat{P}v-02}) holds. For
$k\in\bbZ_+$, let $\vc{v}(k)=(v(k,i))_{i\in\bbD}$ and
$\widetilde{\vc{v}}(k)=(\widetilde{v}(k,i))_{i\in\bbD}$, i.e.,
\[
\vc{v}^{\top} 
= (\vc{v}^{\top}(0),\vc{v}^{\top}(1),\vc{v}^{\top}(2),\dots),
\quad
\widetilde{\vc{v}}^{\top}
= (\widetilde{\vc{v}}^{\top}(0),\widetilde{\vc{v}}^{\top}(1),\widetilde{\vc{v}}^{\top}(2),\dots),
\]
where the superscript $``\top"$ denotes the transpose operator for
vectors and matrices.  It then follows from
(\ref{ineqn-hat{P}-widetilde{v}}), (\ref{defn-b}) and
(\ref{defn-widetilde{v}}) that
\begin{eqnarray}
\qquad
\sum_{\ell=0}^{\infty}
\widetilde{\vc{P}}^M(0;\ell)\widetilde{\vc{v}}(\ell)
&\le& \sum_{\ell=0}^{\infty}\widetilde{\vc{P}}^M(0;\ell)\vc{v}(\ell) + B\vc{e}
\le \vc{v}(0) - \phi \circ \vc{v}(0) 
+ (b +B)\vc{e}
\nonumber
\\
&=& \widetilde{\vc{v}}(0) - \phi \circ \widetilde{\vc{v}}(0)
+ \widetilde{b}\vc{e}
\le \widetilde{\vc{v}}(0) - \widetilde{\phi} \circ \widetilde{\vc{v}}(0) 
+ \widetilde{b}\vc{e},
\label{add-eqn-07}
\end{eqnarray}
where the last inequality holds because $\phi(x) \ge
\widetilde{\phi}(x)$ for $x \ge 1$ (see (\ref{defn-phi})).

It should be noted that $\widetilde{\vc{P}}^M \in \sfBM_d$ due to
$\widetilde{\vc{P}} \in \sfBM_d$. Thus, $\widetilde{\vc{P}}^M(k;0) \ge
\widetilde{\vc{P}}^M(K;0)$ for $k=0,1,\dots,K$. Using this and
(\ref{defn-widetilde{v}}), we have, for $k=1,2,\dots,K$,
\begin{eqnarray}
\sum_{\ell=0}^{\infty}
\widetilde{\vc{P}}^M(k;\ell)\widetilde{\vc{v}}(\ell)
&=& \sum_{\ell=0}^{\infty}\widetilde{\vc{P}}^M(k;\ell)\vc{v}(\ell)
+ B\{\vc{e} - \widetilde{\vc{P}}^M(k;0)\vc{e}\}
\nonumber
\\
&\le& \sum_{\ell=0}^{\infty}\widetilde{\vc{P}}^M(k;\ell)\vc{v}(\ell)
 + B\{\vc{e} - \widetilde{\vc{P}}^M(K;0)\vc{e}\}.
\label{add-eqn-10}
\end{eqnarray}
Applying (\ref{ineqn-hat{P}-widetilde{v}}) and
(\ref{defn-widetilde{v}}) to the right hand side of
(\ref{add-eqn-10}) yields, for $k=1,2,\dots,K$,
\begin{eqnarray}
\sum_{\ell=0}^{\infty}
\widetilde{\vc{P}}^M(k;\ell)\widetilde{\vc{v}}(\ell)
&\le& \vc{v}(k) - \phi \circ \vc{v}(k) 
+ b \vc{e} + B\{\vc{e} - \widetilde{\vc{P}}^M(K;0)\vc{e}\}
\nonumber
\\
&=& \widetilde{\vc{v}}(k) - \phi \circ \vc{v}(k) 
+ b \vc{e} - B\widetilde{\vc{P}}^M(K;0)\vc{e}.
\label{add-eqn-06}
\end{eqnarray}
Note here that the nonnegative concave function $\phi:[1,\infty) \to
  (0,\infty)$ is log-concave and thus
\begin{eqnarray*}
{\rmd \over \rmd x}\left( {\phi(x-B) \over \phi(x)} \right)
&=& {\phi(x-B) \over \phi(x)}
\left( {\phi'(x-B) \over \phi(x-B)} - {\phi'(x) \over \phi(x)} \right)
\nonumber
\\
&=& {\phi(x-B) \over \phi(x)}
\left\{ 
{\rmd \over \rmd x} \log \phi(x-B) - {\rmd \over \rmd x} \log \phi(x) 
\right\} \ge 0.
\end{eqnarray*}
Therefore, it follows from (\ref{defn-phi}) that
\begin{eqnarray*}
{ \phi(x-B) \over \phi(x) }
\ge {\phi(1) \over \phi(B+1)}
=
{ \widetilde{\phi}(x) \over \phi(x) },
\qquad x \ge B+1,
\end{eqnarray*}
which leads to 
\begin{equation}
\widetilde{\phi}(x) \le \phi(x-B),\qquad x \ge B+1.
\label{ineqn-wt{phi}(x)}
\end{equation}
In addition, from (\ref{defn-widetilde{v}}) and $\vc{v} \ge \vc{e}$,
we have
\begin{equation}
\widetilde{\vc{v}}(k) = \vc{v}(k) + B\vc{e} \ge (B+1)\vc{e},
\qquad k \in \bbN.
\label{ineqn-wt{v}(k)}
\end{equation}
The inequalities (\ref{ineqn-wt{phi}(x)}) and (\ref{ineqn-wt{v}(k)})
imply that
\begin{equation}
\widetilde{\phi} \circ \widetilde{\vc{v}}(k)
\le
\phi \circ \vc{v}(k), \qquad k \in \bbN,
\label{add-eqn-0519-04}
\end{equation}
where $\widetilde{\phi} \circ \widetilde{\vc{v}}(k) =
(\widetilde{\phi} \circ \widetilde{v}(k,i))_{i\in\bbD}$.  Substituting
(\ref{add-eqn-0519-04}) into (\ref{add-eqn-06}) and using
(\ref{defn-B}), we obtain
\begin{eqnarray}
\sum_{\ell=0}^{\infty}
\widetilde{\vc{P}}^M(k;\ell)\widetilde{\vc{v}}(\ell)
&\le& \widetilde{\vc{v}}(k) - \widetilde{\phi} \circ  \widetilde{\vc{v}}(k) 
+ 
\{  b \vc{e} - B \widetilde{\vc{P}}^M(K;0)\vc{e} \}
\nonumber
\\
&\le& \widetilde{\vc{v}}(k) - \widetilde{\phi} \circ  \widetilde{\vc{v}}(k), 
\qquad k=1,2,\dots,K.
\label{add-eqn-08}
\end{eqnarray}
Similarly, it follows from (\ref{ineqn-hat{P}-widetilde{v}}),
(\ref{defn-widetilde{v}}) and (\ref{add-eqn-0519-04}) that, for
$k=K+1,K+2,\dots$,
\begin{eqnarray}
\sum_{\ell=0}^{\infty}
\widetilde{\vc{P}}^M(k;\ell)\widetilde{\vc{v}}(\ell)
&\le& \sum_{\ell=0}^{\infty}\widetilde{\vc{P}}^M(k;\ell)\vc{v}(\ell) 
+ B\vc{e}
\le \vc{v}(k) - \phi \circ \vc{v}(k) 
+ B\vc{e}
\nonumber
\\
&=& \widetilde{\vc{v}}(k) - \phi \circ \vc{v}(k) 
\le \widetilde{\vc{v}}(k) - \widetilde{\phi} \circ \widetilde{\vc{v}}(k).
\label{add-eqn-09}
\end{eqnarray}
As a result, combining (\ref{add-eqn-07}), (\ref{add-eqn-08}) and
(\ref{add-eqn-09}) results in (\ref{ineqn-hat{P}v-02}). \qed

\section{Applications to GI/G/1-type Markov chains}\label{sec-applications}

In this section, we discuss the application of our results to
block-monotone GI/G/1-type Markov chains. We first present a procedure
for establishing an error bound for the LC-block-augmented truncation
of a general block-monotone GI/G/1-type Markov chain. We then consider
such a simple special case that the parameters of the error bound are
specified.

\subsection{General case}\label{subsec-general}

In this subsection, we consider a general block-monotone GI/G/1-type
Markov chain, which satisfies the following assumption.

\medskip

\begin{assumpt}\label{assumpt-GI/G/1-type}
\hfill
\begin{enumerate}
\renewcommand{\labelenumi}{(\roman{enumi})}
\item $\vc{P} \in \sfBM_d$ is an irreducible GI/G/1-type transition
  probability matrix such that
\begin{equation}
\vc{P}
=
\left(
\begin{array}{ccccc}
\vc{B}(0)  & \vc{B}(1)  & \vc{B}(2)  & \vc{B}(3) & \cdots
\\
\underline{\vc{A}}(-1) & \vc{A}(0)  & \vc{A}(1)  & \vc{A}(2) & \cdots
\\
\underline{\vc{A}}(-2) & \vc{A}(-1) & \vc{A}(0)  & \vc{A}(1) & \cdots
\\
\underline{\vc{A}}(-3) & \vc{A}(-2) & \vc{A}(-1) & \vc{A}(0) & \cdots
\\
\vdots     & \vdots     &  \vdots    & \vdots    & \ddots
\end{array}
\right),
\label{GIG1-type-P}
\end{equation}
where $\underline{\vc{A}}(-k) = \sum_{\ell=-\infty}^{-k}\vc{A}(\ell)$
for $k\in\bbZ_+$ and the diagonal blocks $\vc{A}(0)$ and $\vc{B}(0)$
are $d \times d$ matrices; and
\item $\vc{A}:=\sum_{k=-\infty}^{\infty}\vc{A}(k)$ is an irreducible
stochastic matrix.
\end{enumerate}
\end{assumpt}

\medskip

\begin{rem}\label{rem-Masu15}
The GI/G/1-type transition probability matrix $\vc{P}$ in
(\ref{GIG1-type-P}) above is equal to $\vc{P}$ in (5.1) of
\cite{Masu15}, though the latter has the block matrices
$\{\vc{B}(0),\vc{B}(-1),\vc{B}(-2),\dots\}$ in the first block
column. Indeed, for all $k\in\bbN$, $\vc{B}(-k)$ must be equal to
$\underline{\vc{A}}(-k)$ because $\vc{A} = \vc{B}(-k) +
\sum_{\ell=-k+1}^{\infty}\vc{A}(\ell)$ due to $\vc{P} \in \sfBM_d$
(see \cite[Proposition 1.1]{Masu15}).
\end{rem}

\medskip

It follows from (\ref{GIG1-type-P}), $\vc{P} \in \sfBM_d$ and
\cite[Proposition~1.1]{Masu15} that, for all $k\in\bbZ_+$,
\[
\vc{\Psi}
=\sum_{m=0}^{\infty}\vc{P}(k;m)
= \sum_{\ell=0}^{\infty}\vc{B}(\ell)
= \sum_{\ell=-\infty}^{\infty}\vc{A}(\ell)
= \vc{A}.
\]
Recall here that $\vc{\varpi}$ is the stationary probability vector of
$\vc{\Psi}$ and thus $\vc{A}$. Using $\vc{\varpi}$, we define $\sigma$
as
\begin{equation}
\sigma = \vc{\varpi}\sum_{k=-\infty}^{\infty}k\vc{A}(k)\vc{e}.
\label{defn-sigma}
\end{equation}
We then assume the following.

\medskip

\begin{assumpt}\label{assumpt-stability-cond}
$\sigma < 0$.
\end{assumpt}

\medskip

Under Assumptions~\ref{assumpt-GI/G/1-type} and
\ref{assumpt-stability-cond}, the GI/G/1-type transition probability
matrix $\vc{P}$ in (\ref{GIG1-type-P}) is irreducible and positive
recurrent (see, e.g., \cite[Chapter XI, Proposition 3.1]{Asmu03}) and
thus has the unique stationary probability vector
$\vc{\pi}=(\pi(k,i))_{(k,i)\in\bbF}$.

The subject of this subsection is to show how to obtain an upper bound
for $\| \trunc{\vc{\pi}}_n - \vc{\pi} \|$ by using
Theorem~\ref{thm-extended-geo2}. To apply
Theorem~\ref{thm-extended-geo2} to $\vc{P}$ in (\ref{GIG1-type-P}), we
have to prove that the drift condition
(\ref{ineqn-hat{P}-widetilde{v}}) holds with $\wt{\vc{P}}$ being equal
to $\vc{P}$, that is, $\vc{P}^M\vc{v} \le \vc{v} - \phi \circ \vc{v} +
b\vc{1}_K$. Unfortunately, since it is not easy to establish directly
the drift condition on $\vc{P}^M$, we construct a modified transition
probability matrix from $\vc{P}$ and establish a similar drift
condition on the $M$th power of the modified transition probability
matrix. Using the similar drift condition, we derive the desired drift
condition on $\vc{P}^M$.

We define the modified transition probability matrix mentioned above.
For $N \in \bbN$, let
$\vc{P}_N:=(p_N(k,i;\ell,j))_{(k,i),(\ell,j)\in\bbF}$ denote a
stochastic matrix such that
\begin{equation}
\vc{P}_N
=
\left(
\begin{array}{ccccc}
  \vc{B}(0)  
& \vc{B}(1)  
& \vc{B}(2)  
& \vc{B}(3)  
& \cdots
\\
  \vc{A}_N(-1) 
& \vc{A}_N(0)  
& \vc{A}_N(1)  
& \vc{A}_N(2)  
& \cdots
\\
   \vdots 
& \vdots 
& \vdots  
& \vdots 
& \ddots 
\\
  \vc{A}_N(-N) 
& \vc{A}_N(-N+1) 
& \vc{A}_N(-N+2) 
& \vc{A}_N(-N+3) 
& \cdots
\\
 \vc{O}
& \vc{A}_N(-N) 
& \vc{A}_N(-N+1) 
& \vc{A}_N(-N+2) 
& \cdots 
\\
   \vc{O}
&  \vc{O}
&  \vc{A}_N(-N) 
& \vc{A}_N(-N+1) 
& \cdots
\\
  \vdots    
& \vdots     
& \vdots    
& \vdots    
& \ddots
\end{array}
\right),
\label{defn-P_N}
\end{equation}
where $\vc{A}_N(k)$, $k \in \bbZ:=\{0,\pm1,\pm2,\dots\}$ is given by
\begin{equation}
\vc{A}_N(k)
= \left\{
\begin{array}{ll}
\vc{O}, & k \le -N-1,
\\
\underline{\vc{A}}(-N), & k=-N,
\\
\vc{A}(k), & k \ge -N+1.
\end{array}
\right.
\label{defn-widetilde{A}(k)}
\end{equation}
It follows from (\ref{defn-P_N}) and
Assumption~\ref{assumpt-GI/G/1-type} that $\vc{P} \prec_d \vc{P}_N \in
\sfBM_d$ for $N \in \bbN$.  Furthermore, let
\begin{eqnarray}
\sigma_N
=
\vc{\varpi}
\sum_{k=-\infty}^{\infty}k\vc{A}_N(k)\vc{e}.
\label{defn-sigma_N-02}
\end{eqnarray}
Assumption \ref{assumpt-stability-cond} implies that, for
all sufficiently large $N \in \bbN$,
\begin{equation}
\sigma_N  < 0.
\label{cond-sigma_N<0}
\end{equation}

In the rest of this subsection, we fix $N \in \bbN$ such that
(\ref{cond-sigma_N<0}) holds. For $M \in \bbN$, we then define
$\{\vc{A}_N^{\ast M}(k);k\in\bbZ\}$ as the $M$th-fold convolution of
$\{\vc{A}_N(k);k\in\bbZ\}$, i.e., $\vc{A}_N^{\ast 1}(k) = \vc{A}_N(k)$
for $k \in \bbZ$ and, for $M \ge 2$,
\[
\vc{A}_N^{\ast M}(k) 
= \sum_{\ell \in \bbZ}\vc{A}_N^{\ast (M-1)}(k-\ell) \vc{A}_N(\ell),
\qquad k \in \bbZ.
\]
From (\ref{defn-widetilde{A}(k)}), we have $\vc{A}_N^{\ast
  M}(k)=\vc{O}$ for all $k \le -L - 1$, where $L = MN$. We also obtain
the following result.

\medskip

\begin{lemma}\label{lem-M_0}
There exists some $M_0 \in \bbN$ such that
\begin{equation}
\sum_{\ell=-L}^{\infty} \ell \vc{A}_N^{\ast M}(\ell)\vc{e} 
< \vc{0}\qquad \mbox{for all $M \ge M_0$}.
\label{defn-sigma_N-04}
\end{equation}
\end{lemma}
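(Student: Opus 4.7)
The plan is to derive an explicit formula for $\sum_{\ell \in \bbZ} \ell \vc{A}_N^{\ast M}(\ell) \vc{e}$ in terms of the first-moment vector $\vc{m}_N := \sum_{k \in \bbZ} k \vc{A}_N(k) \vc{e}$ and the iterates of $\vc{A}$, and then to invoke the Cesaro version of the ergodic theorem for the irreducible finite stochastic matrix $\vc{A}$. Note first that $\vc{m}_N$ is componentwise finite: the negative-index part equals $-N\underline{\vc{A}}(-N)\vc{e}$ by (\ref{defn-widetilde{A}(k)}), while the positive-index part $\sum_{k \ge 1} k \vc{A}(k)\vc{e}$ has finite $\vc{\varpi}$-weighted sum (implicit in $\sigma > -\infty$) and is therefore componentwise finite because $\vc{\varpi} > \vc{0}$ by irreducibility of $\vc{A}$.

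First I would prove, by induction on $M$ from the convolution definition, the two identities
\[
\sum_{\ell \in \bbZ} \vc{A}_N^{\ast M}(\ell) = \vc{A}^M,
\qquad
\sum_{\ell \in \bbZ} \ell \vc{A}_N^{\ast M}(\ell)
= \sum_{j=0}^{M-1} \vc{A}^j \lleft( \sum_{k \in \bbZ} k \vc{A}_N(k) \right) \vc{A}^{M-1-j}.
\]
Post-multiplying the second identity by $\vc{e}$ and using $\vc{A}\vc{e} = \vc{e}$, which yields $\vc{A}^{M-1-j}\vc{e} = \vc{e}$ for every $j$, the right side collapses to $\sum_{j=0}^{M-1} \vc{A}^j \vc{m}_N$, giving
\[
\sum_{\ell = -L}^{\infty} \ell \vc{A}_N^{\ast M}(\ell) \vc{e}
= \sum_{j=0}^{M-1} \vc{A}^j \vc{m}_N,
\]
where the lower limit on the left has been reset to $-L$ because $\vc{A}_N^{\ast M}(\ell) = \vc{O}$ for $\ell \le -L-1$.

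Next, since $\vc{A}$ is a finite irreducible stochastic matrix with stationary row vector $\vc{\varpi}$, the mean ergodic theorem (which holds even in the periodic case) gives $M^{-1}\sum_{j=0}^{M-1}\vc{A}^j \to \vc{e}\vc{\varpi}$ as $M \to \infty$; combined with the displayed identity this yields
\[
\frac{1}{M}\sum_{\ell = -L}^{\infty} \ell \vc{A}_N^{\ast M}(\ell)\vc{e}
\;\longrightarrow\; \vc{e}\vc{\varpi}\vc{m}_N = \sigma_N \vc{e}
\quad \text{as } M \to \infty.
\]
Because $\sigma_N < 0$ by (\ref{cond-sigma_N<0}), this Cesaro average is componentwise strictly negative for all sufficiently large $M$, and hence so is $\sum_{\ell = -L}^{\infty} \ell \vc{A}_N^{\ast M}(\ell)\vc{e}$ itself; any such threshold serves as $M_0$ in (\ref{defn-sigma_N-04}).

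The main obstacle will be justifying the first-moment identity in the induction rigorously, as it requires interchanging doubly-infinite series whose positive and negative parts may each diverge in absolute value in general. I would handle this by splitting $\ell$ into its positive and negative parts and applying Tonelli or dominated convergence on each signed piece separately, exploiting that $\vc{A}_N$ has bounded negative support and componentwise finite positive first moment.
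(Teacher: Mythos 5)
Your proof is correct, and it takes a genuinely different route from the paper. The paper argues probabilistically: it regards $\{\vc{A}_N(k)\}$ as the kernel of a Markov additive process $\{(Y_\nu,J_\nu)\}$, cites Asmussen's result that the negative mean drift $\sigma_N<0$ forces $Y_\nu\to-\infty$ almost surely, and concludes (\ref{defn-sigma_N-04}) from there. You instead derive the algebraic identity $\sum_{\ell}\ell\vc{A}_N^{\ast M}(\ell)\vc{e}=\sum_{j=0}^{M-1}\vc{A}^j\vc{m}_N$ and invoke the Cesaro ergodic theorem $M^{-1}\sum_{j=0}^{M-1}\vc{A}^j\to\vc{e}\vc{\varpi}$ for the finite irreducible stochastic matrix $\vc{A}$ (valid without aperiodicity), obtaining $M^{-1}\sum_{\ell}\ell\vc{A}_N^{\ast M}(\ell)\vc{e}\to\sigma_N\vc{e}<\vc{0}$. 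Your approach is more elementary and arguably more self-contained: the paper's passage from $Y_\nu\to-\infty$ a.s.\ to the \emph{expected} $M$-step increment being componentwise negative for all large $M$ is left implicit, and really amounts to the same law-of-large-numbers identity you write out explicitly. What the paper's route buys is brevity by offloading to a textbook reference.

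Two small points. First, your parenthetical justification for componentwise finiteness of $\sum_{k\ge1}k\vc{A}(k)\vc{e}$ should read ``implicit in $\sigma_N$ being a finite negative number'' rather than ``implicit in $\sigma>-\infty$'': since $\vc{A}_N$ has bounded negative support, the negative part of $\vc{m}_N$ is automatically finite, and it is the hypothesis $\sigma_N<0$ (hence $\sigma_N<\infty$) together with $\vc{\varpi}>\vc{0}$ that forces the positive part to be componentwise finite. Second, you correctly flag the need to justify the term-by-term rearrangement in the convolution identity; the split you describe (finite negative support, componentwise-finite positive first moment) does the job via Tonelli on each sign separately, so the induction is sound once that is written out.
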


\medskip

\begin{proof}
We consider a Markov additive process
$\{(Y_{\nu},J_{\nu});\nu\in\bbZ_+\}$ with state space $\bbZ \times
\bbD$ and kernel $\{\vc{A}_N(k);k\in\bbZ\}$, which evolves according
to the following transition law.
\[
\PP(Y_{\nu+1} - Y_{\nu} = k, J_{\nu+1} = j \mid J_{\nu} = i)
= [\vc{A}_N(k)]_{i,j},
\qquad k \in \bbZ,\ i,j \in \bbD,
\]
where $[\,\cdot\,]_{i,j}$ denotes the $(i,j)$th element of the matrix
in the square brackets. Note here that $\vc{\varpi}$ is the stationary
probability vector of the background Markov chain
$\{J_{\nu};\nu\in\bbZ_+\}$ and thus $\sigma_N < 0$ is the mean drift
of the marginal process $\{Y_{\nu};\nu\in\bbZ_+\}$. In addition, it
follows from $\sigma_N < 0$ that $\PP(\lim_{\nu\to\infty}Y_{\nu} =
-\infty \mid Y_0 = i) = 1$ for all $i \in \bbD$ (see, e.g.,
\cite[Chapter XI, Proposition 2.10]{Asmu03}). Therefore,
(\ref{defn-sigma_N-04}) holds for some $M_0 \in \bbN$.
\end{proof}

\medskip

To proceed, we fix $M \ge M_0$ arbitrarily, where $M_0$ is some
positive integer satisfying (\ref{defn-sigma_N-04}). We denote by
$\vc{P}_N^M(k;\ell)$, $k,\ell \in \bbZ_+$ the $(k,\ell)$th block of
the $M$th power $\vc{P}_N^M$ of $\vc{P}_N$. It then follows from
(\ref{defn-P_N}) that
\[
\vc{P}_N^M(k;\ell)
= 
\left\{
\begin{array}{ll}
\vc{A}_N^{\ast M}(l-k), & k \ge L,\ \ell \ge k - L,
\\
\vc{O},                           & k \ge L,\ \ell \le k - L - 1,
\end{array}
\right.
\]
and thus
\begin{equation}
\vc{P}_N^M
=
\left(
\begin{array}{ccccc}
  \vc{P}_N^M(0;0)  
& \vc{P}_N^M(0;1)
& \vc{P}_N^M(0;2)  
& \vc{P}_N^M(0;3)  
& \cdots
\\
  \vc{P}_N^M(1;0)  
& \vc{P}_N^M(1;1)  
& \vc{P}_N^M(1;2) 
& \vc{P}_N^M(1;3)  
& \cdots
\\
   \vdots 
& \vdots 
& \vdots  
& \vdots  
& \ddots 
\\
  \vc{P}_N^M(L-1;0)  
& \vc{P}_N^M(L-1;1)  
& \vc{P}_N^M(L-1;2)  
& \vc{P}_N^M(L-1;3)  
& \cdots
\\
  \vc{A}_N^{\ast M}(-L) 
& \vc{A}_N^{\ast M}(-L+1) 
& \vc{A}_N^{\ast M}(-L+2) 
& \vc{A}_N^{\ast M}(-L+3)
& \cdots
\\
 \vc{O}
&  \vc{A}_N^{\ast M}(-L)
& \vc{A}_N^{\ast M}(-L+1) 
& \vc{A}_N^{\ast M}(-L+2) 
& \cdots 
\\
   \vc{O}
&  \vc{O}
&  \vc{A}_N^{\ast M}(-L) 
& \vc{A}_N^{\ast M}(-L+1)
& \cdots
\\
  \vdots    
& \vdots  
& \vdots      
& \vdots       
& \ddots
\end{array}
\right).
\label{defn-P_N^M}
\end{equation}
Since $\vc{P} \prec_d \vc{P}_N \in \sfBM_d$, it follows from
\cite[Remark~2.1 and Proposition~2.3]{Masu15} that $\vc{P}^{M} \prec_d
\vc{P}_N^M$, which implies that
\begin{equation}
\vc{P}^{M}\vc{v} \le \vc{P}_N^M \vc{v}
\quad \mbox{if $\vc{v} \in \sfBI_d$}.
\label{ineqn-P^Mv}
\end{equation}
Therefore, we show that $\vc{P}_N^M$ satisfies a drift condition
similar to (\ref{ineqn-hat{P}-widetilde{v}}).

We now introduce a function $V: [0,\infty) \to [1,\infty)$, which
    plays a key role in establishing the drift condition on
    $\vc{P}_N^M$.
\medskip

\begin{assumpt}\label{assumpt-moment-cond}
There exists some $\alpha \in [0,1)$ such that, for any $\delta > 0$,
\begin{eqnarray}
\lim_{k\to\infty}
{1 \over V'(k)}
\sum_{\ell = \lfloor \delta k^{1 - \alpha} \rfloor + 1}^{\infty}
V(k + \ell)\vc{A}_N^{\ast M}(\ell)\vc{e} 
&=& \vc{0},
\label{lim-V'(k)}
\end{eqnarray}
where $V: [0,\infty) \to [1,\infty)$ is an increasing, convex and
    log-concave function such that
\begin{enumerate}
\renewcommand{\labelenumi}{(\roman{enumi})}
\item $\lim_{x\to\infty}(\log V(x)) / x = \log r_{A_+}$,
where
\[
r_{A_+} = \sup\left\{z \ge 1; 
\mbox{$\sum_{\ell = 0}^{\infty}z^{\ell}\vc{A}(l)$ is finite}\right\};
\]
\item $V(x)$ is twice-differentiable for $x > 0$;
\item $V'(0) := \lim_{x \downarrow 0}V'(x) > 0$ and $\lim_{x\to\infty}V'(x) = \infty$;
\item $V''(x)/V'(x)$ is nonincreasing for $x > 0$; and
\item $\limsup_{\delta \downarrow 0}\lim_{x\to\infty}
V'(x+\delta x^{1 - \alpha})/V'(x) = 1$.
\end{enumerate}
\end{assumpt}

\medskip

\begin{rem}
Appendix~\ref{appen-examp-F} provides a sufficient condition for
Assumption~\ref{assumpt-moment-cond} and the typical examples of $V$.
\end{rem}

\medskip

The key properties of the function $V$ are summarized in
Lemmas~\ref{lem-condition(i)} and \ref{lem-K_0} below.

\medskip

\begin{lemma}\label{lem-condition(i)}
Suppose that Assumption~\ref{assumpt-moment-cond} is satisfied. It
then holds that, for any $\alpha_0 \in (\alpha,1)$,
\begin{eqnarray}
V'(x) = \exp\{o(x^{\alpha_0})\},
\label{eqn-V'(x)}
\end{eqnarray}
where $f(x) = o(g(x))$ represents $\lim_{x\to\infty}f(x)/g(x) =
0$. It also holds that
\begin{equation}
\log V(x) = o(x^{\alpha_0}).
\label{eqn-logV(x)/x}
\end{equation}
In addition, 
\begin{equation}
\lim_{x\to\infty}{V''(x) \over V'(x)} = 0.
\label{lim-V''(x)/V'(x)}
\end{equation}

\end{lemma}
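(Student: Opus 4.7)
My plan is to prove the three statements in the order (\ref{lim-V''(x)/V'(x)}), (\ref{eqn-V'(x)}), (\ref{eqn-logV(x)/x}). Throughout, let $g(x) := V''(x)/V'(x) = (\log V')'(x)$. By assumption (iv) $g$ is nonincreasing on $(0,\infty)$, and by convexity of $V$ it is nonnegative, so $c := \lim_{x\to\infty} g(x)$ exists in $[0,\infty)$. Moreover, since $V'$ is nondecreasing, the ratio $V'(x+\delta x^{1-\alpha})/V'(x)$ is nondecreasing in $\delta$; hence the $\limsup$ in assumption (v) coincides with the limit in $\delta$, and for every $\epsilon > 0$ one may choose $\delta = \delta(\epsilon) \in (0,1)$ so that $L(\delta) := \lim_{x\to\infty} V'(x+\delta x^{1-\alpha})/V'(x) < 1 + \epsilon$.

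To establish (\ref{lim-V''(x)/V'(x)}), I argue by contradiction. If $c > 0$, then for any $\delta > 0$,
\[
\log \frac{V'(x+\delta x^{1-\alpha})}{V'(x)} = \int_x^{x+\delta x^{1-\alpha}} g(y)\,dy \ge c\,\delta\,x^{1-\alpha} \longrightarrow \infty
\]
as $x \to \infty$, where the divergence uses $\alpha < 1$. This contradicts the finiteness of $L(\delta)$, so $c = 0$.

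For (\ref{eqn-V'(x)}), fix $\alpha_0 \in (\alpha, 1)$ and $\epsilon > 0$, and choose $\delta$ as above. Using monotonicity of $g$,
\[
g(x+\delta x^{1-\alpha})\,\delta x^{1-\alpha} \le \int_x^{x+\delta x^{1-\alpha}} g(y)\,dy = \log \frac{V'(x+\delta x^{1-\alpha})}{V'(x)} \le \epsilon + o(1)
\]
as $x\to\infty$. Setting $y = x + \delta x^{1-\alpha}$, which satisfies $y \sim x$ and $y^{1-\alpha}\sim x^{1-\alpha}$, yields a constant $C=C(\epsilon,\delta)$ with $g(y) \le C y^{\alpha-1}$ for all sufficiently large $y$. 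Integrating this bound from some $x_0 > 0$ gives
\[
\log V'(x) \le \log V'(x_0) + C \int_{x_0}^x y^{\alpha-1}\,dy,
\]
which is $O(x^\alpha)$ when $\alpha > 0$ and $O(\log x)$ when $\alpha = 0$. In either case $\log V'(x)/x^{\alpha_0} \to 0$, proving (\ref{eqn-V'(x)}). The third estimate (\ref{eqn-logV(x)/x}) follows immediately from $V(x) \le V(0) + xV'(x)$ (since $V'$ is nondecreasing), as $\log[V(0)+xV'(x)] = \log x + \log V'(x) + O(1) = o(x^{\alpha_0})$.

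The main technical step is the derivation of $g(y) \le Cy^{\alpha-1}$ for all large $y$: this uses both the monotonicity of $g$ and the key asymptotic $y^{1-\alpha}/x^{1-\alpha}\to 1$ as $x\to\infty$ under $y = x+\delta x^{1-\alpha}$. Once this bound is in place the remaining integrations are routine, and the coefficient $C=C(\epsilon,\delta)$ causes no difficulty because the strict inequality $\alpha_0 > \alpha$ absorbs any fixed multiplicative factor.
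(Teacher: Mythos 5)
Your proof is correct, but it takes a genuinely different route from the paper's. The paper sets $\overline{G}(x)=V'(0)/V'(x)$, observes that $\overline{G}$ is the tail of a distribution function, uses condition (v) to show it is $(1/\alpha_0)$th-order long-tailed, and then invokes an external result (Lemma A.1(i) of reference [18], Masuyama 2013) to conclude $\overline{G}(x)=\exp\{-o(x^{\alpha_0})\}$; it then deduces (\ref{eqn-logV(x)/x}) from (\ref{eqn-V'(x)}) and finally proves (\ref{lim-V''(x)/V'(x)}) by a contradiction argument that relies on (\ref{eqn-logV(x)/x}). You instead give a self-contained elementary argument: you first prove (\ref{lim-V''(x)/V'(x)}) directly from condition (v), and then exploit the monotonicity from condition (iv) to derive the quantitative pointwise bound $V''(y)/V'(y)\le C\,y^{\alpha-1}$ for large $y$, which after integration yields $\log V'(x)=O(x^\alpha)$ (or $O(\log x)$ when $\alpha=0$), hence $o(x^{\alpha_0})$; the last estimate (\ref{eqn-logV(x)/x}) follows from $V(x)\le V(0)+xV'(x)$. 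Your version has the advantage of avoiding the citation to the long-tailed-distribution machinery and of proving a slightly sharper intermediate bound ($\log V'(x)=O(x^\alpha)$ rather than merely $o(x^{\alpha_0})$), at the cost of a more hands-on computation; the paper's version is shorter on the page because it delegates the analytic content to the cited lemma. Both proofs use conditions (iii), (iv), (v) and the convexity of $V$ in essentially the same way, and the reordering of the three claims is harmless since your argument for (\ref{eqn-V'(x)}) does not depend on (\ref{lim-V''(x)/V'(x)}).
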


\medskip

\begin{proof}
We first prove (\ref{eqn-V'(x)}).
Let $G$ denote a continuous function
on $[0,\infty)$ such that
\begin{equation*}
G(x) = 1 - \dm{V'(0) \over V'(x)}, \qquad x \ge 0.
\end{equation*}
Since $V'$ is nondecreasing and $\lim_{x\to\infty}V'(x) = \infty$ (due
to the convexity of $V$ and condition (iii) of
Assumption~\ref{assumpt-moment-cond}), $G$ is a distribution
function. We now define $\overline{G}(x) = 1 - G(x)$ for $x \ge 0$,
i.e.,
\begin{equation}
\overline{G}(x) = {V'(0) \over V'(x)}, \qquad x \ge 0.
\label{def-ol{G}}
\end{equation}
It then follows from condition (v) of
Assumption~\ref{assumpt-moment-cond} that
\[
\liminf_{\delta \downarrow 0}\lim_{x\to\infty}
{\overline{G}(x+\delta x^{1 - \alpha}) \over \overline{G}(x)} = 1,
\]
and thus, for any $\alpha_0 \in (\alpha,1)$,
\begin{eqnarray*}
1 \ge 
\lim_{x\to\infty} 
{\overline{G}(x+x^{1 - \alpha_0}) \over \overline{G}(x)}
\ge 
\liminf_{\delta \downarrow 0}
\lim_{x\to\infty} {\overline{G}(x+\delta x^{1 - \alpha}) \over \overline{G}(x)}
= 1.
\end{eqnarray*}
Therefore, we have
\[
\lim_{x\to\infty} 
{\overline{G}(x+x^{1 - \alpha_0}) \over \overline{G}(x)} = 1,
\]
which implies that the distribution function $G$ is
$(1/\alpha_0)$th-order long-tailed \cite[Definition~1.1 and Lemma
  A.3]{Masu13}. Combining this fact with \cite[Lemma A.1~(i)]{Masu13}
yields
\begin{equation}
\overline{G}(x) = \exp\{-o(x^{\alpha_0})\}.
\label{eqn-ol{G}}
\end{equation}
Substituting (\ref{eqn-ol{G}}) into (\ref{def-ol{G}}) results in
(\ref{eqn-V'(x)}). Furthermore, using (\ref{eqn-V'(x)}), we can
readily obtain (\ref{eqn-logV(x)/x}).

Finally, we prove (\ref{lim-V''(x)/V'(x)}) by contradiction. To this
end, we assume that (\ref{lim-V''(x)/V'(x)}) does not hold, i.e.,
there exist some $\delta > 0$ and $x_0 := x_0(\delta)> 0$ such that
\[
{\rmd \over \rmd x }\log V'(x)
= {V''(x) \over V'(x)} \ge \delta \qquad \mbox{for all $x > x_0$},
\] 
and thus $\log V'(x) - \log V'(x_0) \ge \delta (x-x_0)$ for $x \ge
x_0$, which yields
\[
{V'(x) \over V'(x_0)} \ge  \exp\{\delta (x - x_0)\},
\qquad x \ge x_0.
\]
Therefore, we have
\[
V(x) \ge V(x_0) + {V'(x_0) \over \delta}(\rme^{\delta(x-x_0)} - 1),
\qquad x \ge x_0.
\]
From this inequality, we obtain $\liminf_{x\to\infty}\log V(x) / x \ge
\delta$, which is inconsistent with (\ref{eqn-logV(x)/x}).
\end{proof}

\medskip

\begin{rem}\label{rem-r_{A_+}}
Equation (\ref{eqn-logV(x)/x}) shows that $\log V(x) = o(x)$. Thus,
from condition (i) of Assumption~\ref{assumpt-moment-cond}, we have
$r_{A_+} = 1$, i.e., $\{\vc{A}(k);k\in\bbZ_+\}$ is heavy-tailed.
\end{rem}

\medskip

\begin{rem}
As mentioned in Remark~\ref{rem-Masu15}, Masuyama~\cite{Masu15}
considered the same block-monotone GI/G/1-type transition probability
matrix as $\vc{P}$ in (\ref{GIG1-type-P}). However,
Masuyama~\cite{Masu15} assumed that $r_{A_+} > 1$, i.e.,
$\{\vc{A}(k);k\in\bbZ_+\}$ is light-tailed, which implies that
$\{\vc{B}(k);k\in\bbZ_+\}$ is also light-tailed. Indeed, since $\vc{P}
\in \sfBM_d$, it holds that $\sum_{\ell=k}^{\infty}\vc{B}(\ell) \le
\sum_{\ell=k}^{\infty}\vc{A}(\ell)$ for all $k \in \bbZ_+$, which
yields, for $z \in [1,r_{A_+})$,
\begin{eqnarray*}
\sum_{k=0}^{\infty}z^k \vc{B}(k)
&=& \sum_{\ell=0}^{\infty} \vc{B}(\ell) 
+ \sum_{\ell=1}^{\infty} (z^{\ell} - 1) \vc{B}(\ell)  
\nonumber
\\
&=& \sum_{\ell=0}^{\infty} \vc{B}(\ell) 
  + \sum_{k=1}^{\infty}(z^k - z^{k-1}) \sum_{\ell=k}^{\infty} \vc{B}(\ell) 
\nonumber
\\
&\le& \sum_{\ell=0}^{\infty} \vc{A}(\ell) 
  + \sum_{k=1}^{\infty}(z^k - z^{k-1}) \sum_{\ell=k}^{\infty} \vc{A}(\ell) 
= \sum_{k=0}^{\infty}z^k \vc{A}(k).
\end{eqnarray*}
Thus, if $r_{A_+} > 1$, then $r_{B_+}:=\sup\left\{z \ge 1;
\mbox{$\sum_{\ell = 0}^{\infty}z^{\ell}\vc{B}(l)$ is finite}\right\}
\ge r_{A_+} > 1$. It is known (see \cite[Theorem 2.2]{Jarn03} and
\cite[Theorem 3.1]{Mao14}) that, under
Assumptions~\ref{assumpt-GI/G/1-type} and
\ref{assumpt-stability-cond}, $\min(r_{A_+}, r_{B_+}) > 1$ if and only
if the GI/G/1-type transition probability matrix $\vc{P}$ in
(\ref{GIG1-type-P}) satisfies the geometric drift condition (see
Remark~\ref{rem-drift-cond}).  On the other hand,
Assumption~\ref{assumpt-moment-cond} together with
Assumptions~\ref{assumpt-GI/G/1-type} and \ref{assumpt-stability-cond}
implies that the GI/G/1-type transition probability matrix $\vc{P}$ in
(\ref{GIG1-type-P}) is positive recurrent but does not satisfy the
geometric drift condition because $r_{A_+} = 1$, i.e.,
$\{\vc{A}(k);k\in\bbZ_+\}$ is heavy-tailed (see
Remark~\ref{rem-r_{A_+}}). It is also known (see
\cite[Theorem~3.1]{Li05}) that if $\{\vc{A}(k);k\in\bbZ_+\}$ is
heavy-tailed then so is the stationary probability vector of the
ergodic GI/G/1-type transition probability matrix $\vc{P}$. Such
GI/G/1-type transition probability matrices typically arise from
BMAP/GI/1 queues with subexponential service times and/or batch sizes,
and these Markov chains have the subexponential stationary probability
vectors under some mild technical conditions \cite{Masu09,Masu15-ANOR}
(see also \cite{Kimu13,Masu11}).
\end{rem}

\medskip

\begin{lemma}\label{lem-K_0}
If Assumption~\ref{assumpt-moment-cond} holds, then, for any
$\varepsilon \in (0,1)$, there exist some
$\delta_0:=\delta_0(\varepsilon) > 0$ and positive integer
$K_0:=K_0(\delta_0,L) \ge L$ such that
\begin{eqnarray}
V'(k + \delta_0 k^{1 - \alpha}) 
&\le& (1 + \varepsilon)V'(k),
\qquad  k \ge K_0+1,
\label{ineqn-K_0-01}
\\
V'(k - L) 
&\ge& (1 - \varepsilon)V'(k),
\qquad  k \ge K_0+1.
\label{ineqn-K_0-02}
\end{eqnarray}
\end{lemma}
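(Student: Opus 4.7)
The plan is to derive both inequalities from Assumption~\ref{assumpt-moment-cond}(v), using the monotonicity of $V'$ (which follows from the convexity of $V$) to bridge the two. The first inequality is essentially a direct reading of~(v); the second follows by substituting $k \mapsto k-L$ in the first and noting that, since $\alpha < 1$, the constant shift $L$ is negligible compared to $\delta_0(k-L)^{1-\alpha}$.

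Fix $\varepsilon \in (0,1)$. Because $V$ is convex and twice-differentiable, $V'$ is nondecreasing, so $V'(x+\delta x^{1-\alpha})/V'(x) \ge 1$ for all $\delta,x > 0$. Hence the inner limit in condition~(v), whenever it exists, is bounded below by $1$, and the equality $\limsup_{\delta \downarrow 0}\lim_{x\to\infty}V'(x+\delta x^{1-\alpha})/V'(x) = 1$ forces the existence of the inner limit for arbitrarily small $\delta$ with value approaching $1$. I would therefore pick $\delta_0 > 0$ such that
\[
\lim_{x\to\infty}\frac{V'(x+\delta_0 x^{1-\alpha})}{V'(x)} \le 1+\varepsilon,
\]
and then choose $K_1 \in \bbN$ with $K_1 \ge L$ large enough that $V'(k+\delta_0 k^{1-\alpha}) \le (1+\varepsilon)V'(k)$ for all $k \ge K_1+1$. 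This immediately gives (\ref{ineqn-K_0-01}).

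For (\ref{ineqn-K_0-02}), since $\alpha<1$ and $L$ is a fixed integer, $\delta_0 (k-L)^{1-\alpha} \to \infty$, so there is $K_2 \ge L$ with $\delta_0(k-L)^{1-\alpha} \ge L$ for all $k \ge K_2+1$. For such $k$, the monotonicity of $V'$ gives
\[
V'(k) = V'\bigl((k-L)+L\bigr) \le V'\bigl((k-L)+\delta_0 (k-L)^{1-\alpha}\bigr).
\]
If moreover $k-L \ge K_1+1$, applying (\ref{ineqn-K_0-01}) with $k$ replaced by $k-L$ yields $V'\bigl((k-L)+\delta_0 (k-L)^{1-\alpha}\bigr) \le (1+\varepsilon)V'(k-L)$. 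Combining, $V'(k) \le (1+\varepsilon)V'(k-L)$, and rearranging together with $(1+\varepsilon)(1-\varepsilon) \le 1$ delivers $V'(k-L) \ge V'(k)/(1+\varepsilon) \ge (1-\varepsilon) V'(k)$. Setting $K_0 = \max\{K_1+L,\,K_2,\,L\}$ makes both estimates valid for $k \ge K_0+1$.

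The only subtle point is the interpretation of Assumption~\ref{assumpt-moment-cond}(v): one must use the lower bound $V'(x+\delta x^{1-\alpha})/V'(x) \ge 1$, coming from $V''\ge 0$, to rule out oscillation from below and thereby extract a single admissible $\delta_0$ from the ``limsup equals $1$'' clause. Once this is pinned down, the rest of the argument is elementary bookkeeping on thresholds.
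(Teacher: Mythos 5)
Your proof is correct and rests on the same ingredients as the paper's: monotonicity of $V'$ from convexity, and condition (v) of Assumption~\ref{assumpt-moment-cond}. The difference is in how you reach (\ref{ineqn-K_0-02}). The paper establishes it independently of (\ref{ineqn-K_0-01}) by showing $\lim_{x\to\infty}V'(x-L)/V'(x)=1$ directly: it squeezes $V'(x-L)/V'(x)$ between $1$ and $\liminf_{\delta\downarrow 0}\lim_{y\to\infty}V'(y)/V'(y+\delta y^{1-\alpha})=1$, the latter being a reformulation of condition (v) via reciprocals. You instead derive (\ref{ineqn-K_0-02}) from (\ref{ineqn-K_0-01}) by substituting $k\mapsto k-L$, noting that $\delta_0(k-L)^{1-\alpha}\ge L$ for large $k$, chaining $V'(k)\le V'((k-L)+\delta_0(k-L)^{1-\alpha})\le(1+\varepsilon)V'(k-L)$, and using $1/(1+\varepsilon)\ge 1-\varepsilon$. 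This bootstrapping buys a slightly more economical argument (one application of condition (v) instead of two) at the mild cost of needing the larger threshold $K_0=\max\{K_1+L,K_2,L\}$; the paper's route keeps the two inequalities logically parallel. One small point to tighten: you should pick $\delta_0$ with a strict margin, say $\lim_{x\to\infty}V'(x+\delta_0 x^{1-\alpha})/V'(x)<1+\varepsilon$, so that the eventual bound $V'(k+\delta_0 k^{1-\alpha})\le(1+\varepsilon)V'(k)$ actually follows; with a non-strict limit the ratio could hover just above $1+\varepsilon$. (The paper's own statement of (\ref{ineqn-lim-V'}) with the $1+2\varepsilon$ bound has the same looseness, so this is a presentation issue rather than a flaw unique to your argument.)
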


\begin{proof}
Recall that $V'$ is nondecreasing. Thus, it follows from condition (v)
of Assumption~\ref{assumpt-moment-cond} that
\begin{eqnarray*}
1 \ge \lim_{x\to\infty}
{V'(x - L) \over V'(x)}
&=&
\lim_{y\to\infty}
{V'(y) \over V'(y+L)}
\ge \liminf_{\delta \downarrow 0}\lim_{y\to\infty}
{V'(y) \over V'(y + \delta y^{1 - \alpha})}
= 1,
\end{eqnarray*}
which leads to
\begin{equation}
\lim_{x\to\infty}
{V'(x - L) \over V'(x)} = 1.
\label{long-tailed-V'}
\end{equation}
In addition, condition (v) of Assumption~\ref{assumpt-moment-cond}
implies that, for any $\varepsilon \in (0,1)$,
\begin{equation}
\lim_{x\to\infty}
{V'(x + \delta_0 x^{1 - \alpha}) \over V'(x)}
\le 1 + 2 \varepsilon,
\label{ineqn-lim-V'}
\end{equation}
where $\delta_0 > 0$ is sufficiently small depending on $\varepsilon
\in (0,1)$. Consequently, (\ref{long-tailed-V'}) and
(\ref{ineqn-lim-V'}) show that the statement of this lemma is true.
\end{proof}

\medskip

To establish the drift condition on $\vc{P}_N^M$, we estimate
$\sum_{\ell=0}^{\infty} \vc{P}_N^M(k;\ell)V(\ell)\vc{e}$ for
sufficiently large $k$'s by using Lemmas~\ref{lem-M_0} and \ref{lem-K_0}.
Lemma~\ref{lem-M_0} implies that there exist
$\kappa > 0$ and $\varepsilon \in (0,1)$ such that
\begin{equation}
(1-\varepsilon) 
\sum_{\ell=-L}^{\infty} 
\ell \vc{A}_N^{\ast M}(\ell)\vc{e}
+  2\varepsilon
\sum_{\ell = 0}^{\infty} 
\ell \vc{A}_N^{\ast M}(\ell)\vc{e}
\le -2\kappa \vc{e}.
\label{ineqn-varepsilon}
\end{equation}
Furthermore, according to (\ref{lim-V'(k)}), there exists some $K \in
\{K_0, K_0+1,\dots\}$ such that
\begin{eqnarray}
{1 \over V'(k)}
\sum_{\ell = \lfloor \delta_0 k^{1 - \alpha} \rfloor + 1}^{\infty}
V(k + \ell)\vc{A}_N^{\ast M}(\ell)\vc{e}
&\le& \kappa \vc{e}\qquad \mbox{for all $k \ge K+1$},
\label{ineqn-K_1-02}
\end{eqnarray}
where $\delta_0 > 0$ and $K_0 \in \{L,L+1,\dots\}$ are fixed such that
(\ref{ineqn-K_0-01}) and (\ref{ineqn-K_0-02}) hold. From
(\ref{ineqn-varepsilon}), (\ref{ineqn-K_1-02}) and
Lemma~\ref{lem-K_0}, we obtain the following result.

\medskip

\begin{lemma}\label{lem-App-01}
Suppose that Assumptions~\ref{assumpt-GI/G/1-type},
\ref{assumpt-stability-cond} and \ref{assumpt-moment-cond} are
satisfied. Let $L=MN$, where $N\in \bbN$ and $M\in \bbN$ are fixed
such that (\ref{cond-sigma_N<0}) and (\ref{defn-sigma_N-04})
hold. Furthermore, fix
\begin{enumerate}
\item $\kappa >0$ and $\varepsilon \in (0,1)$ such that
  (\ref{ineqn-varepsilon}) holds;
\item $\delta_0 > 0$ and $K_0 \in \{L,L+1,\dots\}$ such that
  (\ref{ineqn-K_0-01}) and (\ref{ineqn-K_0-02}) hold; and
\item $K \in \{K_0, K_0+1,\dots\}$ such that (\ref{ineqn-K_1-02})
  holds.
\end{enumerate}
We then have
\begin{equation}
\sum_{\ell=0}^{\infty}
\vc{P}_N^M(k;\ell)V(\ell)\vc{e} - V(k)\vc{e}
\le - \kappa V'(k)\vc{e}, \qquad k \ge K+1.
\label{ineqn-Pv-v-02}
\end{equation}
\end{lemma}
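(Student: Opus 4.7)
The plan is to convert the left-hand side of (\ref{ineqn-Pv-v-02}) into a sum over the increments of $\vc{A}_N^{\ast M}$ and then split that sum at the scale $\delta_0 k^{1-\alpha}$. Since $k \ge K+1 \ge K_0+1 \ge L+1$, the block form (\ref{defn-P_N^M}) gives $\vc{P}_N^M(k;\ell) = \vc{A}_N^{\ast M}(\ell - k)$ for $\ell \ge k - L$ and $\vc{O}$ otherwise. Because $\vc{A}_N^{\ast M}(m) = \vc{O}$ for $m \le -L-1$ and $\vc{A} = \sum_m \vc{A}_N(m)$ is stochastic, $\sum_{m=-L}^{\infty}\vc{A}_N^{\ast M}(m)\vc{e} = \vc{A}^M\vc{e} = \vc{e}$. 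Substituting $m = \ell - k$ and subtracting $V(k)\vc{e}$ therefore yields
\[
\sum_{\ell=0}^{\infty}\vc{P}_N^M(k;\ell)V(\ell)\vc{e} - V(k)\vc{e}
= \sum_{m=-L}^{\infty}\vc{A}_N^{\ast M}(m)\bigl[V(k+m) - V(k)\bigr]\vc{e}.
\]

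I would then split the right-hand side at $m_{\ast} := \lfloor \delta_0 k^{1-\alpha}\rfloor$. The tail $m > m_{\ast}$ is controlled by dropping the negative $-V(k)$ contribution and applying (\ref{ineqn-K_1-02}), which bounds that piece by $\kappa V'(k)\vc{e}$; this is where the assumption $K \ge K_0$ and $k \ge K+1$ is used. For the central range $-L \le m \le m_{\ast}$, convexity of $V$ (equivalently, monotonicity of $V'$) yields the sign-independent estimate $V(k+m) - V(k) \le V'(k+m)\,m$. Applying (\ref{ineqn-K_0-01}) on $0 \le m \le m_{\ast}$, so that $V'(k+m) \le V'(k + \delta_0 k^{1-\alpha}) \le (1+\varepsilon)V'(k)$, and applying (\ref{ineqn-K_0-02}) on $-L \le m < 0$, so that $V'(k+m) \ge V'(k-L) \ge (1-\varepsilon)V'(k)$ and the inequality flips when multiplied by the negative $m$, the central part is bounded by
\[
V'(k)\Bigl[(1-\varepsilon)S_{-} + (1+\varepsilon)S_{+}^{\mathrm{fin}}\Bigr],
\]
where $S_{-} := \sum_{-L \le m < 0} m\,\vc{A}_N^{\ast M}(m)\vc{e}$ and $S_{+}^{\mathrm{fin}} := \sum_{0 \le m \le m_{\ast}} m\,\vc{A}_N^{\ast M}(m)\vc{e}$.

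The final step is to recognise the bracket as the left-hand side of (\ref{ineqn-varepsilon}). Setting $S_{+}^{\mathrm{full}} := \sum_{m=0}^{\infty} m\,\vc{A}_N^{\ast M}(m)\vc{e}$, each summand is coordinate-wise nonnegative, so $S_{+}^{\mathrm{fin}} \le S_{+}^{\mathrm{full}}$, and therefore
\[
(1-\varepsilon)S_{-} + (1+\varepsilon)S_{+}^{\mathrm{fin}}
\le (1-\varepsilon)\bigl(S_{-} + S_{+}^{\mathrm{full}}\bigr) + 2\varepsilon\,S_{+}^{\mathrm{full}},
\]
whose right-hand side is precisely the quantity bounded by $-2\kappa\vc{e}$ in (\ref{ineqn-varepsilon}). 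Adding the tail contribution $\kappa V'(k)\vc{e}$ to the central contribution $-2\kappa V'(k)\vc{e}$ produces $-\kappa V'(k)\vc{e}$, which is (\ref{ineqn-Pv-v-02}). The main obstacle I anticipate is the central-range bookkeeping: convexity gives only one-sided control of $V(k+m) - V(k)$, so one must use the upper estimate (\ref{ineqn-K_0-01}) for positive $m$ and the lower estimate (\ref{ineqn-K_0-02}) for negative $m$, then rearrange so that the factor $(1-\varepsilon)$ multiplies the full mean drift $\sum_{m=-L}^{\infty}m\,\vc{A}_N^{\ast M}(m)\vc{e}$ while $2\varepsilon$ multiplies only the positive part, which is the exact linear combination required by (\ref{ineqn-varepsilon}).
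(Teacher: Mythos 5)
Your argument is correct and follows the paper's proof in structure and in every key step: rewrite the sum using the block form and the exact stochasticity $\sum_{m\ge -L}\vc{A}_N^{\ast M}(m)\vc{e}=\vc{e}$, split at $\lfloor\delta_0 k^{1-\alpha}\rfloor$, bound the tail by (\ref{ineqn-K_1-02}), handle the central range with a first-order estimate plus monotonicity of $V'$ and the inequalities (\ref{ineqn-K_0-01})--(\ref{ineqn-K_0-02}), and finally invoke (\ref{ineqn-varepsilon}). The only cosmetic difference is that the paper applies the mean value theorem to get $V(k+\ell)-V(k)=\ell V'(k+\xi\ell)$ with $\xi\in(0,1)$ and then bounds $V'(k+\xi\ell)$ by monotonicity, whereas you invoke convexity directly to get $V(k+m)-V(k)\le mV'(k+m)$; since $V'$ is nondecreasing these produce identical final estimates, so the proofs are essentially the same.
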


\medskip

\begin{proof}
It follows from (\ref{defn-P_N^M}) and $K \ge K_0 \ge L$ that, for $k
\ge K+1$,
\begin{eqnarray}
\lefteqn{
\sum_{\ell=0}^{\infty}\vc{P}_N^M(k;\ell)V(\ell)\vc{e} - V(k)\vc{e}
}
\qquad &&
\nonumber
\\
&=& \sum_{\ell=-L}^{\infty}\vc{A}_N^{\ast M}(\ell)V(k + \ell)\vc{e} 
- V(k)\vc{e} 
\nonumber
\\
&=& 
\sum_{\ell = \lfloor \delta_0 k^{1 - \alpha} \rfloor + 1}^{\infty}
\vc{A}_N^{\ast M}(\ell)V(k + \ell)\vc{e}
+ \sum_{\ell=-L}^{\lfloor \delta_0 k^{1 - \alpha} \rfloor}
\vc{A}_N^{\ast M}(\ell)V(k + \ell)\vc{e}
- V(k)\vc{e}, \qquad
\label{eqn-Pv-v-01}
\end{eqnarray}
where, by convention, any empty sum (which has no terms) is defined as
zero. It also follows from the mean value theorem that, for any $k \in
\bbZ_+$ and $-L \le \ell \le \delta_0 k^{1 - \alpha}$, there exists
some $\xi \in (0,1)$ such that
\begin{equation}
V(k + \ell) = V(k) + \ell V'(k + \xi \ell), 
\qquad k \in \bbZ_+,\, -L \le \ell \le \delta_0 k^{1 - \alpha}.
\label{ineqn-01}
\end{equation}
Substituting (\ref{ineqn-01}) into the second term in the right hand
side of (\ref{eqn-Pv-v-01}) yields, for $k \ge K+1$,
\begin{eqnarray}
\lefteqn{
\sum_{\ell=0}^{\infty}\vc{P}_N^M(k;\ell)V(\ell)\vc{e} 
- V(k)\vc{e}
}
\qquad &&
\nonumber
\\
&=& \sum_{\ell = \lfloor \delta_0 k^{1 - \alpha} \rfloor + 1}^{\infty}
V(k + \ell)\vc{A}_N^{\ast M}(\ell)\vc{e}
+ \sum_{\ell=-L}^{\lfloor \delta_0 k^{1 - \alpha} \rfloor}
V'(k + \xi \ell) \ell \vc{A}_N^{\ast M}(\ell)\vc{e}
\nonumber
\\
&& {}
+ V(k)\sum_{\ell=-L}^{\lfloor \delta_0 k^{1 - \alpha} \rfloor}
\vc{A}_N^{\ast M}(\ell)\vc{e}
 - V(k)\vc{e}
\nonumber
\\
&\le& \sum_{\ell = \lfloor \delta_0 k^{1 - \alpha} \rfloor + 1}^{\infty}
V(k + \ell)\vc{A}_N^{\ast M}(\ell)\vc{e}
+ \sum_{\ell=-L}^{\lfloor \delta_0 k^{1 - \alpha} \rfloor}
V'(k + \xi \ell) \ell \vc{A}_N^{\ast M}(\ell)\vc{e}, 
\label{eqn-Pv-v-02}
\end{eqnarray}
where the inequality holds because $\sum_{\ell=-L}^{\lfloor \delta_0
  k^{1 - \alpha} \rfloor}\vc{A}_N^{\ast M}(\ell)\vc{e} \le \vc{e}$ for
all $k \in \bbZ_+$.  Furthermore, since $V'$ is nondecreasing, we have
\begin{eqnarray*}
\sum_{\ell = -L}^{\lfloor \delta_0 k^{1 - \alpha} \rfloor} 
V'(k + \xi \ell) \ell \vc{A}_N^{\ast M}(\ell)\vc{e}
&\le& V'(k - L) 
\sum_{\ell = -L}^{-1} \ell \vc{A}_N^{\ast M}(\ell)\vc{e}
\nonumber
\\
&& {} 
+  V'(k + \delta_0 k^{1 - \alpha})
\sum_{\ell = 0}^{\lfloor \delta_0 k^{1 - \alpha} \rfloor} 
\ell \vc{A}_N^{\ast M}(\ell)\vc{e},
\quad k \ge K+1.
\end{eqnarray*}
Applying (\ref{ineqn-K_0-01}) and (\ref{ineqn-K_0-02}) to the above
inequality, we obtain, for $k \ge K+1$,
\begin{eqnarray}
\lefteqn{
\sum_{\ell = -L}^{\lfloor \delta_0 k^{1 - \alpha} \rfloor}
V'(k + \xi \ell) \ell\vc{A}_N^{\ast M}(\ell)\vc{e}
}
\quad &&
\nonumber
\\
&\le& V'(k) 
\left[
(1-\varepsilon)
\sum_{\ell = -L}^{-1} \ell \vc{A}_N^{\ast M}(\ell)\vc{e}
+  (1+\varepsilon)
\sum_{\ell = 0}^{\lfloor \delta_0 k^{1 - \alpha} \rfloor} 
\ell \vc{A}_N^{\ast M}(\ell)\vc{e}
\right]
\nonumber
\\
&=& V'(k)
\left[
(1-\varepsilon) 
\sum_{\ell=-L}^{\lfloor \delta_0 k^{1 - \alpha} \rfloor} 
\ell \vc{A}_N^{\ast M}(\ell)\vc{e}
+  2\varepsilon
\sum_{\ell = 0}^{\lfloor \delta_0 k^{1 - \alpha} \rfloor} 
\ell \vc{A}_N^{\ast M}(\ell)\vc{e}
\right]
\nonumber
\\
&\le& V'(k)
\left[
(1-\varepsilon) 
\sum_{\ell=-L}^{\infty} 
\ell \vc{A}_N^{\ast M}(\ell)\vc{e}
+  2\varepsilon
\sum_{\ell = 0}^{\infty} 
\ell \vc{A}_N^{\ast M}(\ell)\vc{e}
\right]
\nonumber
\\
&\le& -2\kappa V'(k)\vc{e},
\label{add-eqn-04}
\end{eqnarray}
where we use (\ref{ineqn-varepsilon}) in the last inequality.
Finally, substituting (\ref{ineqn-K_1-02}) and (\ref{add-eqn-04}) into
(\ref{eqn-Pv-v-02}) yields (\ref{ineqn-Pv-v-02}).
\end{proof}

\medskip

We are now ready to establish the drift condition on $\vc{P}_N^M$ and
thus $\vc{P}^M$.  We fix
\begin{align}
&&&&&&&& \vc{v}(k) &= V(k)\vc{e}, & k &\in \bbZ_+, &&&&&&&&
\label{fix-v(k)}
\\
&&&&&&&& \phi(t) &= \kappa V'(V^{-1}(t)), & t &\ge 1, &&&&&&&&
\label{defn-tilde{phi}}
\end{align}
where $V^{-1}$ denotes the inverse function of the increasing and
differentiable function $V$ (see
Assumption~\ref{assumpt-moment-cond}). Clearly, $\vc{v} \in
\sfBM_d$. Furthermore, it follows from (\ref{defn-tilde{phi}}) that
\begin{equation}
\kappa V'(k)\vc{e} = \phi \circ V(k)\vc{e}
= \phi \circ \vc{v}(k),\qquad k \in \bbZ_+.
\label{eqn-kappa*V'(t)}
\end{equation}
Substituting (\ref{fix-v(k)}) and (\ref{eqn-kappa*V'(t)}) into
(\ref{ineqn-Pv-v-02}), we have
\begin{equation}
\sum_{\ell=0}^{\infty}
\vc{P}_N^M(k;\ell) \vc{v}(\ell) - \vc{v}(k)
\le - \phi \circ \vc{v}(k), \qquad k \ge K+1.
\label{ineqn-Pv-v-06}
\end{equation}
We also have
\begin{equation}
\sum_{\ell=0}^{\infty}
\vc{P}_N^M(k;\ell)\vc{v}(\ell) - \vc{v}(k)
\le - \phi \circ \vc{v}(k)
+  b \vc{e}, 
\qquad k=0,1,\dots,K,
\label{ineqn-Pv-05}
\end{equation}
where
\begin{equation}
b  = \inf\left\{
x \ge 0; x\vc{e} \ge \sum_{\ell=0}^{\infty}\vc{P}_N^M(k;\ell)
\vc{v}(\ell) - \vc{v}(k) + \phi 
\circ \vc{v}(k), k=0,1,\dots,K
\right\}.
\label{defn-widetilde{b}}
\end{equation}
Combining (\ref{ineqn-Pv-v-06}) and (\ref{ineqn-Pv-05}) yields
\begin{equation}
\vc{P}_N^M\vc{v}
\le \vc{v} - \phi \circ \vc{v} +  b \vc{1}_K.
\label{ineqn-P_N_widetilde{v}}
\end{equation}
From $\vc{v} \in \sfBM_d$, (\ref{ineqn-P^Mv}) and
(\ref{ineqn-P_N_widetilde{v}}), we obtain
\begin{equation}
\vc{P}^M\vc{v} 
\le \vc{v} - \phi \circ \vc{v}
+  b \vc{1}_K.
\label{ineqn-P_widetilde{v}}
\end{equation}

To use Theorem~\ref{thm-extended-geo2}, it remains to show that the
function $\phi$ in (\ref{defn-tilde{phi}}) is qualified for the
function $\phi$ appearing in Theorem~\ref{thm-extended-geo2}, which is
accomplished by Lemma~\ref{lem-phi} below.

\medskip

\begin{lemma}\label{lem-phi}
The function $\phi$ in (\ref{defn-tilde{phi}}) is
nondecreasing, differentiable and concave. Furthermore,
$\lim_{t\to\infty}\phi'(t) = 0$.
\end{lemma}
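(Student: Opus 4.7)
The plan is straightforward: write $\phi(t) = \kappa V'(V^{-1}(t))$ and translate each requirement on $\phi$ into a property of $V$ already listed in Assumption~\ref{assumpt-moment-cond}, together with the asymptotic behavior of $V''/V'$ established in Lemma~\ref{lem-condition(i)}. Put $x = V^{-1}(t)$, so that $\phi(t) = \kappa V'(x)$ and, by the inverse function theorem,
\begin{equation*}
\phi'(t) = \kappa \frac{V''(x)}{V'(x)} \Bigg|_{x = V^{-1}(t)}.
\end{equation*}
This single identity is the engine of the proof.

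First, I would verify that $\phi$ is well-defined and differentiable on its domain: $V$ is increasing and differentiable with $V'(0) > 0$, so $V^{-1}$ is well-defined and differentiable on the image of $V$, which contains $[1,\infty)$ (using $V(0) \ge 1$ and $V \to \infty$). Since $V''$ exists for $x > 0$ by condition (ii), the chain-rule computation above is legitimate. Second, monotonicity: $V$ convex gives $V'' \ge 0$, so $V'$ is nondecreasing; composing with the increasing function $V^{-1}$ gives $\phi$ nondecreasing.

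Third, concavity: I would show $\phi'$ is nonincreasing. By the displayed formula, $\phi'(t) = \kappa \cdot (V''/V') \circ V^{-1}(t)$. Condition (iv) of Assumption~\ref{assumpt-moment-cond} states that $V''/V'$ is nonincreasing on $(0,\infty)$, and $V^{-1}$ is increasing, so the composition is nonincreasing in $t$. Hence $\phi' \ge 0$ is nonincreasing, which gives both concavity and (redundantly) monotonicity.

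Finally, the limit: since $V^{-1}(t) \to \infty$ as $t \to \infty$, we obtain
\begin{equation*}
\lim_{t\to\infty} \phi'(t) = \kappa \lim_{x\to\infty} \frac{V''(x)}{V'(x)} = 0,
\end{equation*}
where the last equality is exactly (\ref{lim-V''(x)/V'(x)}) in Lemma~\ref{lem-condition(i)}. No step here is technically hard; the only point that requires any thought is making sure condition (iv) really delivers concavity (rather than just monotonicity of $V'$), and this is handled by the chain-rule identity above. The role of Lemma~\ref{lem-condition(i)} is essential for the $\phi'(t) \to 0$ conclusion, since otherwise Assumption~\ref{assumpt-moment-cond} alone does not directly state the vanishing of $V''/V'$ at infinity.
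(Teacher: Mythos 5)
Your proposal is correct and follows essentially the same route as the paper's proof: you use the chain-rule identity $\phi'(t) = \kappa\, V''(V^{-1}(t))/V'(V^{-1}(t))$, derive nondecreasingness from $V'' \ge 0$, concavity from condition (iv) of Assumption~\ref{assumpt-moment-cond}, and the vanishing limit from $V^{-1}(t) \to \infty$ together with (\ref{lim-V''(x)/V'(x)}) of Lemma~\ref{lem-condition(i)}.
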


\medskip

{\it Proof of Lemma~\ref{lem-phi}}.~  
Since $(V^{-1})'(t) = 1 / V'(V^{-1}(t))$ for $t > 1$, we obtain
\begin{equation}
\phi'(t) 
= \kappa V''(V^{-1}(t)) \cdot (V^{-1})'(t)
= \kappa {V''(V^{-1}(t)) \over V'(V^{-1}(t))},\qquad t > 1.
\label{eqn-phi'(t)}
\end{equation}
It follows from (\ref{eqn-phi'(t)}), $V'(x) > 0$ and $V''(x) \ge 0$
for $x > 0$ that
\[
\phi'(t) \ge 0,\qquad t > 1,
\]
which shows that $\phi$ is nondecreasing. It also follows from
(\ref{eqn-phi'(t)}) and condition (iv) of
Assumption~\ref{assumpt-moment-cond} that $\phi'$ is nonincreasing,
which implies that $\phi$ is concave. In addition, since
$\lim_{x\to\infty}V'(x) = \infty$ (see condition (iii) of
Assumption~\ref{assumpt-moment-cond}), we have $\lim_{x\to\infty}V(x)
= \infty$ and thus $\lim_{t\to\infty}V^{-1}(t) = \infty$. Therefore,
(\ref{eqn-phi'(t)}) together with (\ref{lim-V''(x)/V'(x)}) yields
$\lim_{t\to\infty}\phi'(t) = 0$.  \qed

\medskip

The following theorem is immediate from
Theorem~\ref{thm-extended-geo2} with $\widetilde{\vc{P}} = \vc{P}$ and
$\phi \circ \vc{v}(k)=\kappa V'(k)\vc{e}$ (see (\ref{eqn-kappa*V'(t)})).

\medskip

\begin{theorem}\label{thm-special}
Suppose that all the conditions of Lemma~\ref{lem-App-01} are
satisfied.  Furthermore, suppose that $\vc{P}^M(K;0)\vc{e} > \vc{0}$,
and fix $B \in (0,\infty)$ such that $B\vc{P}^M(K;0)\vc{e} \ge
b\vc{e}$, where $b $ is given in (\ref{defn-widetilde{b}}). Under
these conditions, we have
\begin{equation}
\left\| \trunc{\vc{\pi}}_n - \vc{\pi} \right\|
\le
{8\{ c_{\phi,B}(1) \}^{-1} \over r_{\phi} \circ c_{\phi,B}(m-1)} (V(1) + B)
+ {2mMbd \over \kappa V'(n)}
\qquad \mbox{for all}~m,n\in\bbN,
\label{bound-GIG1-type}
\end{equation}
where the composite function $r_{\phi} \circ\, c_{\phi,B}$ is given by
(\ref{defn-r_{phi}}), (\ref{defn-c_{phi,B}}) and
(\ref{defn-tilde{phi}}). In addition, if $K=0$, then
\begin{equation}
\left\| \trunc{\vc{\pi}}_n - \vc{\pi} \right\|
\le
{8 \over r_{\phi}(m-1)} V(1)
+ {2mMbd \over \kappa V'(n)}
\qquad \mbox{for all}~m,n\in\bbN.
\label{bound-GIG1-type-K=0}
\end{equation}
\end{theorem}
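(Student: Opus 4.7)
The plan is to apply Theorem~\ref{thm-extended-geo2} with the choice $\widetilde{\vc{P}} = \vc{P}$ and then simplify the resulting bound using the explicit forms of $\vc{v}$ and $\phi$ given by (\ref{fix-v(k)}) and (\ref{defn-tilde{phi}}). First I would verify each hypothesis of Theorem~\ref{thm-extended-geo2}. Hypothesis (i) is automatic since Assumption~\ref{assumpt-GI/G/1-type} gives $\vc{P} \in \sfBM_d$ irreducible and trivially $\vc{P} \prec_d \vc{P}$. For hypothesis (ii), $\vc{v}$ defined by (\ref{fix-v(k)}) satisfies $\vc{v} \in \sfBI_d$ and $\vc{v} \ge \vc{e}$ since $V$ is increasing with $V \ge 1$; Lemma~\ref{lem-phi} furnishes that $\phi$ is nondecreasing, differentiable, and concave with $\lim_{t\to\infty}\phi'(t) = 0$; and the drift inequality (\ref{ineqn-hat{P}-widetilde{v}}) with $M$ and $K$ as specified is exactly (\ref{ineqn-P_widetilde{v}}), which is obtained by combining the drift condition (\ref{ineqn-P_N_widetilde{v}}) on $\vc{P}_N^M$ (from Lemma~\ref{lem-App-01} with the reformulation (\ref{eqn-kappa*V'(t)})) with the block-wise domination $\vc{P}^M \prec_d \vc{P}_N^M$ via (\ref{ineqn-P^Mv}). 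The condition (\ref{ineqn-hat{P}(K,0)e}) is taken as an explicit assumption in the statement, and the scalar $B$ meeting (\ref{defn-B}) is given by hypothesis.

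Next I would substitute the explicit expressions into the bound (\ref{bound-pi-04}). Since (\ref{fix-v(k)}) makes $v(k,i) = V(k)$ independent of $i \in \bbD$, one immediately gets
\[
v(1,\vc{\varpi}) = V(1) \sum_{i\in\bbD}\varpi(i) = V(1).
\]
Likewise, (\ref{eqn-kappa*V'(t)}) yields $\phi \circ v(n,i) = \kappa V'(n)$ for every $i\in\bbD$, so that
\[
\sum_{i\in\bbD} \frac{1}{\phi \circ v(n,i)} = \frac{d}{\kappa V'(n)}.
\]
Inserting these two identities into (\ref{bound-pi-04}) produces (\ref{bound-GIG1-type}). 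For the special case $K=0$, the hypothesis (\ref{ineqn-hat{P}(K,0)e}) is not required and Theorem~\ref{thm-extended-geo2} gives the sharper bound (\ref{bound-pi-K=0}); the same simplification of the residual drift term immediately yields (\ref{bound-GIG1-type-K=0}).

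There is no substantive obstacle here: the heavy lifting has been absorbed into Lemma~\ref{lem-App-01} (which establishes the subgeometric drift on $\vc{P}_N^M$), Lemma~\ref{lem-phi} (which verifies the analytic properties of the constructed $\phi$), and Theorem~\ref{thm-extended-geo2} (which converts a drift condition on a dominating chain into a total-variation error bound). The only point that merits care in the write-up is the transfer of the drift condition from $\vc{P}_N^M$ to $\vc{P}^M$: this relies on $\vc{v} \in \sfBI_d$ together with $\vc{P}^M \prec_d \vc{P}_N^M$ (itself a consequence of $\vc{P} \prec_d \vc{P}_N \in \sfBM_d$ and \cite[Proposition~2.3]{Masu15}), after which the remainder of the argument is a direct bookkeeping substitution.
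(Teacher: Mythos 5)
Your proof is correct and follows exactly the route the paper takes: the paper likewise declares the theorem ``immediate from Theorem~\ref{thm-extended-geo2} with $\widetilde{\vc{P}} = \vc{P}$'' using the identification $\phi\circ\vc{v}(k)=\kappa V'(k)\vc{e}$ from (\ref{eqn-kappa*V'(t)}), and you have simply spelled out the bookkeeping that this remark compresses, including the verification of the drift condition via Lemma~\ref{lem-App-01}, the transfer from $\vc{P}_N^M$ to $\vc{P}^M$, and the simplifications $v(1,\vc{\varpi})=V(1)$ and $\sum_{i\in\bbD}1/\phi\circ v(n,i)=d/(\kappa V'(n))$.
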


\medskip

\begin{rem} 
Recall that $\lim_{x\to\infty}V'(x) = \infty$ (see condition (iii) of
Assumption~\ref{assumpt-moment-cond}) and $\lim_{t\to\infty}V^{-1}(t)
= \infty$ (see the proof of Lemma~\ref{lem-phi}). Therefore,
$\lim_{t\to\infty}\phi(t) = \lim_{t\to\infty} \kappa V'(V^{-1}(t)) =
\infty$, which leads to $\lim_{x\to\infty}r_{\phi}(x) = \infty$, as
stated in Remark~\ref{rem-main-thm}.  Consequently, we can choose $m,n
\in \bbN$ such that the error bounds (\ref{bound-GIG1-type}) and
(\ref{bound-GIG1-type-K=0}) are reduced to any desired value less than
two.
\end{rem}

\subsection{Special case}

In this subsection, we consider the LC-block-augmented truncation of a
special block-monotone GI/G/1-type Markov chain, for which we
establish an error bound with specified parameters in accordance with
Theorem~\ref{thm-special}. To this end, in addition to
Assumption~\ref{assumpt-GI/G/1-type}, we assume that $\vc{P}$ in
(\ref{GIG1-type-P}) is reduced to
\begin{equation}
\vc{P}
=
\left(
\begin{array}{ccccc}
\underline{\vc{A}}(0)  & \vc{A}(1)  & \vc{A}(2)  & \vc{A}(3) & \cdots
\\
\underline{\vc{A}}(-1) & \vc{A}(0)  & \vc{A}(1)  & \vc{A}(2) & \cdots
\\
\underline{\vc{A}}(-2) & \vc{A}(-1) & \vc{A}(0)  & \vc{A}(1) & \cdots
\\
\underline{\vc{A}}(-3) & \vc{A}(-2) & \vc{A}(-1) & \vc{A}(0) & \cdots
\\
\vdots     & \vdots     &  \vdots    & \vdots    & \ddots
\end{array}
\right),
\label{MG1-type-P}
\end{equation}
with
\begin{eqnarray}
\vc{A}(k) 
&=& 
\left(
\begin{array}{cc}
0 & 2^{k-1}
\\
2^{k-1} & 0
\end{array}
\right),
\qquad k \le -1,
\label{defn-A(k)-special-01}
\\
\vc{A}(k) 
&=& 
{1 \over 2}\left(
\begin{array}{cc}
0 & \dm{(k+1)^{-\beta_1}\over \zeta(\beta_1)}
\\
\dm{(k+1)^{-\beta_2}\over \zeta(\beta_2)} & 0
\end{array}
\right),
\qquad k \in \bbZ_+,
\label{defn-A(k)-special-02}
\end{eqnarray}
where $2 < \beta_1 < \beta_2$ and $\zeta(\,\cdot\,)$ denotes the
Riemann zeta function.  The matrix $\vc{P}$ in (\ref{MG1-type-P}) can
be regarded as the transition probability matrix of an irreducible
reflected Markov additive process \cite[Chapter XI, Section
  2e]{Asmu03}.  It is easy to see that the stationary probability
vector $\vc{\varpi}$ of $\vc{A}=\sum_{k=-\infty}^{\infty}\vc{A}(k)$ is
given by
\begin{equation}
\vc{\varpi} 
= \left(
\begin{array}{cc}
\dm{1 \over 2} & \dm{1 \over 2}
\end{array}
\right).
\label{eqn-varpi-special}
\end{equation}

For convenience, let
\begin{eqnarray}
\qquad\qquad
a_1(k) = 
\left\{
\begin{array}{ll}
2^{k-1}, & k \le -1,
\\
\rule{0mm}{7mm}
\dm{1 \over 2}{(k+1)^{-\beta_1}\over \zeta(\beta_1)}, & k \in \bbZ_+,
\end{array}
\right.
\quad
a_2(k) = 
\left\{
\begin{array}{ll}
2^{k-1}, & k \le -1,
\\
\rule{0mm}{7mm}
\dm{1 \over 2}{(k+1)^{-\beta_2} \over \zeta(\beta_2)}, & k \in \bbZ_+.
\end{array}
\right. 
\label{defn-a_1(k)-a_2(k)}
\end{eqnarray}
It then follows from (\ref{defn-A(k)-special-01}),
(\ref{defn-A(k)-special-02}) and (\ref{defn-a_1(k)-a_2(k)}) that
\[
\vc{A}(k)\vc{e} 
=
\left(
\begin{array}{cc}
a_1(k)
\\
a_2(k) 
\end{array}
\right),\qquad k \in \bbZ,
\]
and thus
\begin{equation}
\sum_{k=-\infty}^{\infty}k\vc{A}(k)\vc{e} 
=
\left(
\begin{array}{cc}
\overline{a}_1
\\
\overline{a}_2 
\end{array}
\right),
\label{eqn-sum-kA(k)e}
\end{equation}
where 
\begin{eqnarray}
\overline{a}_1 
&:=& \sum_{k=-\infty}^{\infty} k a_1(k)
= {1 \over 2} \left({\zeta(\beta_1 - 1) \over \zeta(\beta_1)} - 3 \right), 
\label{eqn-overline{a}_1}
\\
\overline{a}_2 
&:=& \sum_{k=-\infty}^{\infty} k a_2(k)
= {1 \over 2} \left({\zeta(\beta_2 - 1) \over \zeta(\beta_2)} - 3 \right).
\label{eqn-overline{a}_2}
\end{eqnarray}
Substituting (\ref{eqn-varpi-special}) and (\ref{eqn-sum-kA(k)e}) into
(\ref{defn-sigma}) and using (\ref{eqn-overline{a}_1}) and
(\ref{eqn-overline{a}_2}) yield
\begin{eqnarray}
\sigma 
&=& {1 \over 2} (\overline{a}_1 + \overline{a}_2)
= {1 \over 4} 
\left(
{\zeta(\beta_1 - 1) \over \zeta(\beta_1)}
+
{\zeta(\beta_2 - 1) \over \zeta(\beta_2)} - 6 
\right).
\label{eqn-sigma-special}
\end{eqnarray}
Note here that $1 < \zeta(s_2) < \zeta(s_1) < \pi^2/6$ for $2 < s_1 <
s_2$, which leads to
\begin{equation}
1 < {\zeta(\beta_1 - 1) \over \zeta(\beta_1)} < {\pi^2 \over 6} < {5 \over 3},\qquad
1 < {\zeta(\beta_2 - 1) \over \zeta(\beta_2)} < {\pi^2 \over 6} < {5 \over 3}.
\label{ineqn-zeta}
\end{equation}
Therefore, from (\ref{eqn-overline{a}_1}) and
(\ref{eqn-overline{a}_2}), we have
\[
\overline{a}_1 < 0,\qquad \overline{a}_2 < 0.
\]
Applying these two inequalities to (\ref{eqn-sigma-special}), we
obtain $\sigma < 0$. i.e., Assumption~\ref{assumpt-stability-cond}
holds. As a result, $\vc{P}$ in (\ref{MG1-type-P}) has the unique
stationary probability vector $\vc{\pi}$.
 
Recall that Theorem~\ref{thm-special} holds under all the conditions
of Lemma~\ref{lem-App-01}. Therefore, to fulfill the conditions, we
determine parameters $N \in \bbN$, $M \in \bbN$, $\kappa > 0$,
$\varepsilon \in (0,1)$, $\delta_0 > 0$, $K_0 \in \{L,L+1,\dots\}$ and
$K \in \{K_0,K_0+1,\dots\}$.

We begin with $N \in \bbN$ and $M \in \bbN$. In the present special
case, the stochastic matrix $\vc{P}_1$ in (\ref{defn-P_N}) is
expressed as
\begin{equation}
\vc{P}_1
=
\left(
\begin{array}{ccccc}
\underline{\vc{A}}(0) & \vc{A}(1)  & \vc{A}(2)  & \vc{A}(3) & \cdots
\\
\underline{\vc{A}}(-1) & \vc{A}(0)  & \vc{A}(1)  & \vc{A}(2) & \cdots
\\
\vc{O} & \underline{\vc{A}}(-1) & \vc{A}(0)  & \vc{A}(1) & \cdots
\\
\vc{O} & \vc{O} & \underline{\vc{A}}(-1) & \vc{A}(0) & \cdots
\\
\vdots     & \vdots     &  \vdots    & \vdots    & \ddots
\end{array}
\right).
\label{MG1-type-P-special}
\end{equation}
Note here that (\ref{defn-widetilde{A}(k)}) yields
\begin{equation}
\vc{A}_1(-1) = \underline{\vc{A}}(-1)
= \sum_{k=-\infty}^{-1}\vc{A}(k),
\qquad \vc{A}_1(k) = \vc{A}(k),\quad k \in \bbZ_+.
\label{eqn-A_1(k)=A(k)}
\end{equation}
Thus, we have
\begin{eqnarray}
\qquad
\sum_{k = -1}^{\infty}k \vc{A}_1( k)\vc{e}
&=& - \underline{\vc{A}}(-1)\vc{e} 
+ \sum_{k = 1}^{\infty} k \vc{A}(k)\vc{e}
=
\left(
\begin{array}{c}
\dm{1 \over 2}\left(
{\zeta(\beta_1 - 1) \over \zeta(\beta_1)} - 2 
\right)
\\
\rule{0mm}{7mm}
\dm{1 \over 2}\left(
{\zeta(\beta_2 - 1) \over \zeta(\beta_2)} - 2 
\right)
\end{array}
\right)
< \vc{0},
\label{eqn-sum_k*A_1(k)e}
\end{eqnarray}
where the second equality follows from (\ref{defn-A(k)-special-01})
and (\ref{defn-A(k)-special-02}), and where the last inequality
follows from (\ref{ineqn-zeta}). Furthermore, applying
(\ref{eqn-varpi-special}) and (\ref{eqn-sum_k*A_1(k)e}) to
(\ref{defn-sigma_N-02}) with $N=1$, we obtain
\begin{eqnarray}
\sigma_1 
&=& 
{1 \over 4} 
\left(
{\zeta(\beta_1 - 1) \over \zeta(\beta_1)}
+
{\zeta(\beta_2 - 1) \over \zeta(\beta_2)} - 4 
\right) < 0.
\label{eqn-sigma_1}
\end{eqnarray}
The inequalities (\ref{eqn-sum_k*A_1(k)e}) and (\ref{eqn-sigma_1})
imply that (\ref{cond-sigma_N<0}) and (\ref{defn-sigma_N-04}) hold for
$N=M_0=1$.  To proceed, we fix $N = M = M_0 = 1$ and thus $L=MN=1$.

We then consider $\kappa > 0$ and $\varepsilon \in (0,1)$.  Using
(\ref{eqn-A_1(k)=A(k)}), we can reduce (\ref{ineqn-varepsilon}) to
\[
- (1-\varepsilon) \underline{\vc{A}}(-1)\vc{e}
+ (1 + \varepsilon)\sum_{\ell = 1}^{\infty} \ell \vc{A}(\ell)\vc{e}
\le -2\kappa\vc{e}.
\]
Substituting (\ref{defn-A(k)-special-01}) and
(\ref{defn-A(k)-special-02}) into the above inequality, we have
\begin{equation}
\left(
\begin{array}{c}
\dm{1 + \varepsilon \over 2}{\zeta(\beta_1 - 1) \over \zeta(\beta_1)} - 1 
\\
\rule{0mm}{7mm}
\dm{1 + \varepsilon \over 2}{\zeta(\beta_2 - 1) \over \zeta(\beta_2)} - 1
\end{array}
\right)
\le -2\kappa
\left(
\begin{array}{c}
1
\\
1
\end{array}
\right).
\label{ineqn-kappa-epsilon}
\end{equation}
Since the Riemann zeta function $\zeta(s)$ is log-convex for $s > 1$
(see, e.g., \cite{Gut05}),
\begin{eqnarray*}
{\rmd \over \rmd s}
\left( { \zeta(s-1) \over \zeta(s) } \right)
&=&  {\zeta(s-1) \over \zeta(s)}
\left(
{ \zeta'(s-1) \over \zeta(s-1) }
-
{ \zeta'(s) \over \zeta(s) }
\right)
\nonumber
\\
&=& {\zeta(s-1) \over \zeta(s)}
\left\{
{\rmd \over \rmd s}
\log \zeta(s-1)
-
{\rmd \over \rmd s}
\log \zeta(s)
\right\}
\le 0,
\end{eqnarray*}
which leads to
\begin{equation}
{\zeta(\beta_2 - 1) \over \zeta(\beta_2)} 
\le
{\zeta(\beta_1 - 1) \over \zeta(\beta_1)}.
\label{ineqn-ol{a}_1<ol{a}_2}
\end{equation}
The inequalities (\ref{ineqn-kappa-epsilon}) and
(\ref{ineqn-ol{a}_1<ol{a}_2}) imply that (\ref{ineqn-kappa-epsilon})
holds if
\begin{equation}
{1 + \varepsilon \over 2}{\zeta(\beta_1 - 1) \over \zeta(\beta_1)} - 1 = -2\kappa.
\label{eqn-kappa-epsilon}
\end{equation}
Therefore, we fix $\kappa > 0$ and $\varepsilon \in (0,1)$ such that
\begin{eqnarray}
\kappa 
&=& {1 \over 4}
\left(1 - {\zeta(\beta_1 - 1) \over 2\zeta(\beta_1)} \right)
> {1 \over 24},
\label{defn-kappa}
\\
\varepsilon
&=& {1 \over 2}
\left({2\zeta(\beta_1) \over \zeta(\beta_1 - 1)} - 1\right)
\in \left({1 \over 10}, {1 \over 2} \right),
\label{defn-varepsilon}
\end{eqnarray}
which satisfy (\ref{eqn-kappa-epsilon}) and thus
(\ref{ineqn-kappa-epsilon}).

Next, we determine $\delta_0 > 0$, $K_0 \in \{L,L+1,\dots\}$ (with
$L=1$) and the function $V$ in
Assumption~\ref{assumpt-moment-cond}. From
(\ref{defn-A(k)-special-02}), we have
\[
\lim_{k\to\infty}{\vc{A}(k) \over k^{-\beta_1}}
= \left(
\begin{array}{cc}
0 & \dm{1 \over 2\zeta(\beta_1)}
\\
0 & 0
\end{array}
\right),
\]
which corresponds to the case discussed in
Appendix~\ref{appendix-Polynomial}.  Thus, we fix $\alpha = 0$,
$\beta_0 \in (1,\beta_1 -1)$ arbitrarily and
\begin{equation}
V(x) = (x+x_0)^{\beta_0},\qquad x \ge 0,
\label{eqn-V-Pareto-again}
\end{equation}
with $x_0 > 0$. It then follows that
Assumption~\ref{assumpt-moment-cond} holds for $\alpha = 0$ (see
Appendix~\ref{appendix-Polynomial}).  It also follows from
(\ref{eqn-V-Pareto-again}), $\alpha = 0$ and $L=1$ that
(\ref{ineqn-K_0-01}) and (\ref{ineqn-K_0-02}) are reduced to
\begin{eqnarray}
{ (\delta_0 + 1)k + x_0 \over k+x_0 }  
&\le& (1 + \varepsilon)^{1/(\beta_0-1)},
\qquad  k \ge K_0+1,
\label{ineqn-K_0-01-special}
\\
{ k+x_0-1 \over k+x_0 }
&\ge& (1 - \varepsilon)^{1/(\beta_0-1)},
\qquad  k \ge K_0+1.
\label{ineqn-K_0-02-special}
\end{eqnarray}
Note that
\[
\sup_{k\ge0}{ (\delta_0 + 1)k + x_0 \over k+x_0 } 
= \delta_0 + 1,
\qquad
\inf_{k\ge0}{ k+x_0-1 \over k+x_0 }
= 1 - {1 \over x_0}.
\]
Thus, we fix $K_0 \in \bbN$, $\delta_0 >0$ and $x_0 > 0$ such that
\begin{eqnarray}
K_0
&=& 1,
\qquad
\delta_0 
= (1 + \varepsilon)^{1/(\beta_0-1)} - 1,
\nonumber
\\
x_0
&=& {1 \over 1 - (1 - \varepsilon)^{1/(\beta_0-1)}},
\label{defn-x_0}
\end{eqnarray}
where $\varepsilon$ is given in (\ref{defn-varepsilon}). It is easy to
see that (\ref{ineqn-K_0-01-special}) and (\ref{ineqn-K_0-02-special})
hold.

Finally, we discuss the remaining parameter $K \in
\{K_0,K_0+1,\dots\}$ (with $K_0=1$).  Let $\rho$ denote
\begin{equation}
\rho = \max(1 + 1/\delta_0,x_0).
\label{defn-rho}
\end{equation}
Let $C_1$ and $C_2$ denote
\begin{eqnarray}
C_1
= {\rho^{\beta_0} \delta_0^{-\beta_1+\beta_0+1} 
\over 2\beta_0 (\beta_1 - \beta_0 - 1) \zeta(\beta_1)},
\qquad
C_2
= {\rho^{\beta_0} \delta_0^{-\beta_2+\beta_0+1} 
\over 2\beta_0 (\beta_2 - \beta_0 - 1) \zeta(\beta_2)},
\label{defn-C_1-C_2}
\end{eqnarray}
respectively. Furthermore, fix
\begin{equation}
K 
= \max\left( 
\left\lfloor { C_1 \over \kappa } \right\rfloor^{1/(\beta_1-2)},
\left\lfloor { C_2 \over \kappa } \right\rfloor^{1/(\beta_2-2)}
\right).
\label{eqn-K}
\end{equation}
It then holds that
\begin{equation}
{1 \over V'(k)}
\sum_{\ell=\lfloor \delta_0 k \rfloor + 1}^{\infty}
V(k + \ell)\vc{A}(\ell)\vc{e}
\le \kappa \vc{e}\quad \mbox{for all $k \ge K+1$},
\label{ineqn-151205-02}
\end{equation}
which is proved in Appendix~\ref{appen-ineqn-151205-02}. Inequality
(\ref{ineqn-151205-02}) together with (\ref{eqn-A_1(k)=A(k)}) implies
that (\ref{ineqn-K_1-02}) holds for $M=N=1$, $\alpha = 0$ and $K$
given in (\ref{eqn-K}).

We have confirmed that all the conditions of Lemma~\ref{lem-App-01}
hold for $N=M=1$. Therefore, Lemma~\ref{lem-App-01} implies that
\begin{eqnarray}
\sum_{\ell=0}^{\infty}\vc{P}_1(k;\ell)V(\ell)\vc{e} -V(k)\vc{e}
\le -\kappa V'(k)\vc{e},
\qquad k \ge K + 1.
\label{ineqn-sum-P(k;l)V(l)}
\end{eqnarray}
We now fix $\vc{v} \in \sfBI_d$ such that
\begin{eqnarray}
\vc{v}(k) = V(k) \vc{e} = (k+x_0)^{\beta_0}\vc{e}, \qquad k \in \bbZ_+,
\label{eqn-v(k)}
\end{eqnarray}
where $x_0$ is given in (\ref{defn-x_0}). We also fix $\phi$ as in
(\ref{defn-tilde{phi}}). Thus, (\ref{eqn-kappa*V'(t)}) holds. Applying
(\ref{eqn-kappa*V'(t)}) and (\ref{eqn-v(k)}) to
(\ref{ineqn-sum-P(k;l)V(l)}) yields
\begin{equation}
\sum_{\ell=0}^{\infty}\vc{P}_1(k;\ell)\vc{v}(\ell) - \vc{v}(k)
\le - \phi \circ \vc{v}(k),
\qquad k \ge K + 1.
\label{ineqn-sum-P(k;l)v(l)}
\end{equation}
Consequently, we can establish the drift conditions
(\ref{ineqn-P_N_widetilde{v}}) and (\ref{ineqn-P_widetilde{v}}) with
$N=M=1$ once we find an upper bound $b\vc{e}$ for
$\{\sum_{\ell=0}^{\infty}\vc{P}_1(k;\ell)V(\ell)\vc{e} - V(k)\vc{e} +
\kappa V'(k)\vc{e}; k=0,1,\dots,K\}$.

We specify the parameter $b$. 
It follows from (\ref{defn-A(k)-special-01}),
(\ref{defn-A(k)-special-02}) and (\ref{MG1-type-P-special}) that
\begin{eqnarray}
\lefteqn{
\sum_{\ell=0}^{\infty}\vc{P}_1(0;\ell)V(\ell)\vc{e} 
- V(0)\vc{e} + \kappa V'(0)\vc{e}
}
\quad &&
\nonumber
\\
&=& \underline{\vc{A}}(0) V(0) \vc{e}
+ \sum_{\ell = 1}^{\infty}\vc{A}(\ell)V(\ell)\vc{e}
- V(0)\vc{e}
+ \kappa V'(0)\vc{e}
\nonumber
\\
&\le& \sum_{\ell = 1}^{\infty}\vc{A}(\ell)V(\ell)\vc{e}
+ \kappa V'(0)\vc{e},
\label{add-eqn-160309-01}
\end{eqnarray}
where the inequality is due to $\underline{\vc{A}}(0)\vc{e}
\le \vc{e}$. Similarly, for $k=1,2,\dots,K$,
\begin{eqnarray}
\lefteqn{
\sum_{\ell=0}^{\infty}\vc{P}_1(k;\ell)V(\ell)\vc{e} 
- V(k)\vc{e} + \kappa V'(k)\vc{e}
}
\quad &&
\nonumber
\\
&=& \underline{\vc{A}}(-1) V(k-1) \vc{e}
+ \sum_{\ell = 0}^{\infty}\vc{A}(\ell)V(k+\ell)\vc{e}
- V(k)\vc{e}
+ \kappa V'(k)\vc{e}
\nonumber
\\
&\le& \sum_{\ell = 0}^{\infty}\vc{A}(\ell)V(k+\ell)\vc{e}
+ \kappa V'(k)\vc{e},
\label{add-eqn-160309-02}
\end{eqnarray}
where the inequality holds because $\underline{\vc{A}}(-1)\vc{e} \le
\vc{e}$ and $V(k-1) < V(k)$ for all $k \in \bbN$. Since $V$ and $V'$
is nondecreasing, we have, from (\ref{add-eqn-160309-01}) and
(\ref{add-eqn-160309-02}),
\begin{eqnarray*}
\lefteqn{
\sum_{\ell=0}^{\infty}\vc{P}_1(k;\ell)V(\ell)\vc{e} 
- V(k)\vc{e} + \kappa V'(k)\vc{e}
}
\quad &&
\nonumber
\\
&\le& \sum_{\ell = 0}^{\infty}\vc{A}(\ell)V(K+\ell)\vc{e}
+ \kappa V'(K)\vc{e},
\qquad k = 0,1,\dots,K.
\end{eqnarray*}
Applying (\ref{defn-A(k)-special-02}) and (\ref{eqn-V-Pareto-again})
to the right hand side of the above inequality, we obtain
\begin{equation}
\sum_{\ell=0}^{\infty}\vc{P}_1(k;\ell)V(\ell)\vc{e} 
- V(k)\vc{e} + \kappa V'(k)\vc{e}
\le b\vc{e},
\qquad k = 0,1,\dots,K,
\label{ineqn-sum-P(k;l)V(k+l)}
\end{equation}
where
\begin{eqnarray}
b
&=& \max\left( 
{1 \over 2\zeta(\beta_1)}
\sum_{\ell=0}^{\infty}{(K+\ell+x_0)^{\beta_0} \over (\ell+1)^{\beta_1}},
{1 \over 2\zeta(\beta_2)}
\sum_{\ell=0}^{\infty}{(K+\ell+x_0)^{\beta_0} \over (\ell+1)^{\beta_2}}
\right)
\nonumber
\\
&& {} 
+ \kappa \beta_0 (k + x_0)^{\beta_0 - 1}.
\label{eqn-b-special}
\end{eqnarray}
Substituting (\ref{eqn-kappa*V'(t)}) and (\ref{eqn-v(k)}) into
(\ref{ineqn-sum-P(k;l)V(k+l)}) results in
\begin{equation}
\sum_{\ell=0}^{\infty}\vc{P}_1(k;\ell) \vc{v}(\ell) 
- \vc{v}(k) + \phi \circ \vc{v}(k) \le b\vc{e},
\qquad k = 0,1,\dots,K.
\label{ineqn-sum-P(k;l)v(k+l)}
\end{equation}
Combining (\ref{ineqn-sum-P(k;l)v(k+l)}) with
(\ref{ineqn-sum-P(k;l)v(l)}) leads to
\begin{equation}
\vc{P}_1\vc{v} \le \vc{v} - \phi \circ \vc{v}
+ b \vc{1}_K.
\label{ineqn-P_1v}
\end{equation}
Recall here that $\vc{P} \prec_d \vc{P}_1 \in \sfBM_d$ and $\vc{v} \in
\sfBI_d$. Thus, $\vc{P}\vc{v} \le \vc{P}_1\vc{v}$ (due to
(\ref{ineqn-P^Mv}) with $N=M=1$). This inequality and
(\ref{ineqn-P_1v}) yield the drift condition on $\vc{P}$:
\[
\vc{P}\vc{v} \le \vc{v} - \phi \circ \vc{v} + b \vc{1}_K.
\]
In addition, $\vc{P}(k;0)\vc{e}=\underline{\vc{A}}(-k)\vc{e} > \vc{0}$
for all $k \in \bbN$, which follows from (\ref{MG1-type-P}),
(\ref{defn-A(k)-special-01}) and (\ref{defn-A(k)-special-02}).
 
We have shown that all the conditions of Theorem~\ref{thm-special} are
satisfied. We now fix
\begin{equation}
B = 2^{K}b,
\label{eqn-B=2^Kb}
\end{equation}
where $b$ is given in (\ref{eqn-b-special}). 
It then follows from (\ref{MG1-type-P}),
(\ref{defn-A(k)-special-01}) and (\ref{defn-A(k)-special-02}) that
\[
B\vc{P}(K;0)\vc{e} 
= 2^{K}b\underline{\vc{A}}(-K)\vc{e} 
= b\vc{e}.
\]
As a result, using Theorem~\ref{thm-special}, we can derive an error
bound for the special case considered here.

In what follows, we present the components of the error bound.  Since
$V^{-1}(t) = t^{1/\beta_0} - x_0$ for $t \ge x_0^{\beta_0}$, the
function $\phi$ in (\ref{defn-tilde{phi}}) is expressed as
\begin{equation}
\phi(t) = \kappa \beta_0 t^{1 -1/\beta_0},
\qquad t \ge 1.
\label{eqn-phi(t)-special}
\end{equation}
Therefore, the function $H_{\phi}$ in (\ref{defn-H_{phi}}) is given by
\begin{equation}
H_{\phi}(x) = \int_1^x {y^{-1 + 1/\beta_0} \over  \kappa \beta_0}\rmd y
= \kappa^{-1}(x^{1/\beta_0} - 1),
\qquad x \ge 1.
\label{eqn-H_{phi}-special}
\end{equation}
Substituting (\ref{eqn-phi(t)-special}) and
(\ref{eqn-H_{phi}-special}) into (\ref{defn-r_{phi}}), we have
\begin{equation}
r_{\phi}(x) = \kappa \beta_0 (\kappa x + 1)^{\beta_0-1},
\qquad x \ge 0.
\label{eqn-r_{phi}-special}
\end{equation}
Furthermore, applying (\ref{eqn-B=2^Kb}) and (\ref{eqn-phi(t)-special})
 to (\ref{defn-c_{phi,B}}), we obtain
\begin{equation}
c_{\phi,B}(x)
= \breve{c}  x,
\qquad x \ge 0,
\label{eqn-c_{phi,B}(x)}
\end{equation}
where
\begin{equation}
\breve{c} = ( 2^K b + 1)^{-1 + 1/\beta_0}.
\label{defn-breve{c}}
\end{equation}
Combining (\ref{eqn-r_{phi}-special}) and (\ref{eqn-c_{phi,B}(x)})
yields
\begin{equation}
r_{\phi} \circ c_{\phi,B}(x)
= \kappa \beta_0 (\kappa\breve{c}x + 1)^{\beta_0- 1},
\qquad x \ge 0.
\label{eqn-c_{phi,B}(1)}
\end{equation}
Consequently, letting $M=1$ and $d=2$ in (\ref{bound-GIG1-type}) and
using (\ref{eqn-V-Pareto-again}), (\ref{eqn-B=2^Kb}),
(\ref{eqn-c_{phi,B}(x)}) and (\ref{eqn-c_{phi,B}(1)}), we obtain
\begin{eqnarray}
\left\| \trunc{\vc{\pi}}_n - \vc{\pi} \right\|
&\le&
{
8\breve{c}^{-1}
\over \kappa \beta_0 \{\kappa\breve{c}(m-1) + 1\}^{\beta_0-1}
} 
\left\{ (1+x_0)^{\beta_0} + 2^K b\right\}
\nonumber
\\
&& {}
+ {4mb \over \kappa\beta_0(n+x_0)^{\beta_0-1}}
\qquad \mbox{for all}~m,n\in\bbN,
\label{bound-GIG1-type-special}
\end{eqnarray}
where $\kappa$, $x_0$, $K$, $b$ and $\breve{c}$ are given by
(\ref{defn-kappa}), (\ref{defn-x_0}), (\ref{eqn-K}),
(\ref{eqn-b-special}) and (\ref{defn-breve{c}}), respectively.

Finally, using the obtained bound (\ref{bound-GIG1-type-special}), we
determine a truncation parameter $n \in \bbN$ such that $\left\|
\trunc{\vc{\pi}}_n - \vc{\pi} \right\|$ is within a given tolerance
$\calE \in (0,2)$, i.e.,
\begin{equation}
\left\| \trunc{\vc{\pi}}_n - \vc{\pi} \right\| \le \calE.
\label{error-tolerance}
\end{equation}
Let $m_0$ and $n_0$ denote
\begin{eqnarray*}
\qquad
m_0
&=&
\min\left\{
m \in \bbN;
{
8\breve{c}^{-1}
\over \kappa \beta_0 \{\kappa\breve{c}(m-1) + 1\}^{\beta_0-1}
} 
\left\{ (1+x_0)^{\beta_0} + 2^K b\right\}
\le {\calE \over 2}
\right\},
\\
n_0
&=&
\min\left\{
n \in \bbN;
{4m_0b \over \kappa\beta_0(n+x_0)^{\beta_0-1}}
\le {\calE \over 2}
\right\},
\end{eqnarray*}
respectively.
We then have
\begin{eqnarray}
m_0
&=& \left \lceil 
{1 \over \kappa\breve{c}}
\left(
\left[
{
16\breve{c}^{-1}
\over \kappa \beta_0 \calE
} 
\left\{ (1+x_0)^{\beta_0} + 2^K b\right\}
\right]^{1/(\beta_0-1)} - 1 
\right)
\right\rceil
+1,
\label{defn-m_0}
\\
n_0
&=& \max\left(1,\left \lceil 
\left( {8m_0b \over \kappa\beta_0\calE} \right)^{1/(\beta_0-1)} - x_0
\right\rceil
\right).
\label{defn-n_0}
\end{eqnarray}
Substituting $m_0$ in (\ref{defn-m_0}) and $n_0$ in (\ref{defn-n_0})
into (\ref{bound-GIG1-type-special}) yields (\ref{error-tolerance}).

\section{Concluding remarks}\label{sec-remarks}

This paper studied the estimation of the total variation distance
between the stationary probability vectors of a discrete-time
block-structured Markov chain and its LC-block-augmented
truncation. The main contribution of this paper is to present a
total-variation-distance error bound for the stationary probability
vector of the LC-block-augmented truncation under the assumption that
the original Markov chain is block monotone and satisfies the {\it
  subgeometric} drift condition proposed in \cite{Douc04}. This paper
is complementary to the author's previous study \cite{Masu15}, which
considered discrete-time block-monotone Markov chains satisfying the
{\it geometric} drift condition. The author \cite{Masu15-LAA} also
considered continuous-time block-monotone Markov chains with
exponential ergodicity and derived a total-variation-distance error
bound for the stationary probability vector of the LC-block-augmented
truncation.  The present study and the author's previous ones
\cite{Masu15,Masu15-LAA} depend on the notion of block monotonicity.

Recently, without block monotonicity (including monotonicity), the
author \cite{Masu16} established computable upper bounds for the
absolute difference between the time-averaged functionals of a
continuous-time block-structured Markov chain and its
LC-block-augmented truncation under the assumption that the original
Markov chain satisfies the $\vc{f}$-modulated drift condition (see
\cite[Condition 1,1]{Masu16}). The $\vc{f}$-modulated drift condition
includes the geometric (or exponential in continuous time) drift
condition and the subgeometric (or subexponential in continuous time)
drift condition proposed in \cite{Douc04} as special cases. Therefore,
the error bounds in \cite{Masu16} are widely applicable, though they
are more computationally costly than those in this paper and
\cite{Masu15,Masu15-LAA} with block monotonicity.


\appendix

\section{Examples of function $V$ in Assumption~\ref{assumpt-moment-cond}}\label{appen-examp-F}

We begin with the following lemma.

\medskip

\begin{lemma}\label{lem-sufficient-cond-V}
Equation (\ref{lim-V'(k)}) holds if there exists some $\alpha \in
[0,1)$ such that at least one of (\ref{lim-V'(k)-(i)}) and
  (\ref{lim-V'(k)-(ii)}) below is true for any $\delta > 0$.
\begin{eqnarray}
&&
\lim_{k\to\infty}
{V(k) \over V'(k)}
\sum_{\ell=\lfloor \delta k^{1 - \alpha} \rfloor + 1}^{\infty}
V(\ell)\vc{A}_N^{\ast M}(\ell)\vc{e} 
= \vc{0},
\label{lim-V'(k)-(i)}
\\
&&
\limsup_{\ell\to\infty}{V(\delta \ell) \over V(\ell)} < \infty
\quad \mbox{and} \quad
\sum_{\ell=0}^{\infty} V(\ell^{1/(1-\alpha)})\vc{A}_N^{\ast M}(\ell)\vc{e} 
~\mbox{is finite},
\label{lim-V'(k)-(ii)}
\end{eqnarray}
where $V: [0,\infty) \to [1,\infty)$ is an increasing, convex and
    log-concave function that satisfies conditions (i)--(v) of
    Assumption~\ref{assumpt-moment-cond}.
\end{lemma}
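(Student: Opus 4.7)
The plan is to split the proof according to which of (\ref{lim-V'(k)-(i)}) and (\ref{lim-V'(k)-(ii)}) holds for the given $\delta > 0$. A preparatory fact common to both cases is the submultiplicativity of $V$: since $\log V$ is concave on $[0,\infty)$ and $\log V(0) \ge 0$ (because $V \ge 1$), applying the concavity inequality at the two interior points $k$ and $\ell$ of the interval $[0, k+\ell]$ gives $\log V(k) + \log V(\ell) \ge \log V(k+\ell) + \log V(0)$, and hence
\[
V(k+\ell) \;\le\; V(k)\, V(\ell)/V(0) \;\le\; V(k)\, V(\ell).
\]

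Under hypothesis (\ref{lim-V'(k)-(i)}), I would simply use this bound to pull $V(k)$ out of the sum:
\[
\frac{1}{V'(k)}\sum_{\ell > \lfloor \delta k^{1-\alpha}\rfloor} V(k+\ell)\,\vc{A}_N^{\ast M}(\ell)\vc{e}
\;\le\; \frac{V(k)}{V'(k)}\sum_{\ell > \lfloor \delta k^{1-\alpha}\rfloor} V(\ell)\,\vc{A}_N^{\ast M}(\ell)\vc{e},
\]
and by (\ref{lim-V'(k)-(i)}) the right-hand side tends to $\vc{0}$ as $k\to\infty$, which establishes (\ref{lim-V'(k)}) in this case.

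Under hypothesis (\ref{lim-V'(k)-(ii)}), I would first invert $\ell > \delta k^{1-\alpha}$ to $k \le (\ell/\delta)^{1/(1-\alpha)}$. Because $1/(1-\alpha) \ge 1$, for $\ell \ge 1$ one obtains $k+\ell \le c_\delta\, \ell^{1/(1-\alpha)}$ with $c_\delta := \delta^{-1/(1-\alpha)} + 1$, and hence $V(k+\ell) \le V(c_\delta\, \ell^{1/(1-\alpha)})$ by monotonicity of $V$. I would then combine the concavity of $\log V$ (so that $(\log V)'$ is nonincreasing) with the growth hypothesis $\limsup_\ell V(\delta \ell)/V(\ell) \le C_0 < \infty$ to produce a constant $C$ such that $V(c_\delta m) \le C\, V(m)$ for all sufficiently large $m$. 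Substituting $m = \ell^{1/(1-\alpha)}$ yields $V(k+\ell) \le C\, V(\ell^{1/(1-\alpha)})$, so the tail sum is dominated uniformly in $k$ by $C\sum_{\ell\ge 0} V(\ell^{1/(1-\alpha)})\,\vc{A}_N^{\ast M}(\ell)\vc{e} < \infty$; dividing by $V'(k) \to \infty$ (Assumption~\ref{assumpt-moment-cond}(iii)) then delivers the limit $\vc{0}$.

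The main obstacle will be promoting the single-scale bound $V(\delta \ell)/V(\ell) \le C_0$ to the multi-scale estimate $V(c_\delta m)/V(m) \le C$. For $\delta > 1$ the argument is clean: the mean-value theorem on $[m, \delta m]$ combined with the hypothesis forces $(\log V)'(\xi) \le \log C_0 / ((\delta-1) m)$ for some $\xi \in (m, \delta m)$, monotonicity of $(\log V)'$ propagates this upper bound to all $x \ge \delta m$, and integrating up to $c_\delta m$ yields $V(c_\delta m)/V(m) \le C_0^{(c_\delta - 1)/(\delta - 1)}$ (while the case $c_\delta \le \delta$ is immediate by monotonicity of $V$). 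The range $\delta \le 1$, where the $\limsup$ hypothesis is trivially satisfied and gives no growth information on $V$, is the delicate point and would require a separate treatment that exploits the strong summability imposed by $\sum_\ell V(\ell^{1/(1-\alpha)})\vc{A}_N^{\ast M}(\ell)\vc{e} < \infty$ to absorb the weaker bound on $V(c_\delta m)$ available in that regime.
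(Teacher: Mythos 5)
Your treatment of case (\ref{lim-V'(k)-(i)}) is exactly the paper's argument and is fine. The issue is in case (\ref{lim-V'(k)-(ii)}), where you flag a ``delicate point'' for $\delta \le 1$ and leave it unresolved; that gap is real as the proposal stands, but it arises from a misreading of the hypothesis rather than from a genuine difficulty.

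The lemma assumes that (\ref{lim-V'(k)-(ii)}) holds \emph{for every} $\delta > 0$, so the condition $\limsup_{\ell\to\infty} V(\delta\ell)/V(\ell) < \infty$ is available for \emph{all} constants, not just the one appearing in the lower index of the sum. Once you have obtained, as you correctly do,
\[
V(k+\ell) \;\le\; V\bigl(c_\delta\,\ell^{1/(1-\alpha)}\bigr), \qquad c_\delta := 1 + \delta^{-1/(1-\alpha)} > 1,
\]
you should simply invoke the $\limsup$ hypothesis with the constant $c_\delta$ in place of $\delta$: this yields directly a finite $C$ and a threshold $m_0$ with $V(c_\delta m) \le C\, V(m)$ for $m \ge m_0$, hence (taking $m=\ell^{1/(1-\alpha)}$) the series $\sum_{\ell\ge 0} V\bigl(c_\delta\,\ell^{1/(1-\alpha)}\bigr)\vc{A}_N^{\ast M}(\ell)\vc{e}$ converges by comparison with the assumed-convergent series $\sum_\ell V(\ell^{1/(1-\alpha)})\vc{A}_N^{\ast M}(\ell)\vc{e}$. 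The tail of a convergent series vanishes, and $1/V'(k)$ is bounded above (since $V'$ is nondecreasing with $V'(0)>0$, so in fact $\liminf_k V'(k) > 0$ suffices -- you do not even need $V'(k)\to\infty$ here), giving (\ref{lim-V'(k)}). This is precisely what the paper does. Your more elaborate machinery -- mean-value theorem plus monotonicity of $(\log V)'$ to promote the single-scale bound $V(\delta m)/V(m) \le C_0$ to $V(c_\delta m)/V(m) \le C_0^{(c_\delta-1)/(\delta-1)}$ -- does work when $\delta>1$, but it is unnecessary, and it is exactly this self-imposed restriction to a single fixed $\delta$ that creates the apparent obstruction when $\delta\le 1$. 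Read the $\limsup$ hypothesis as a family indexed by $\delta$, apply it at $\delta := c_\delta$, and the ``delicate point'' disappears.
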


\medskip

\begin{proof}
Since $V$ is log-concave and $\log V(0) \ge 0$, we have, for $x,y \ge
0$,
\begin{eqnarray*}
\log V(x) + \log V(y)
&\ge& 
  \left({x \over x+y} \log V(x+y) + {y \over x+y} \log V(0) \right)
\nonumber
\\
&& {} 
+ \left({y \over x+y} \log V(x+y) + {x \over x+y} \log V(0) \right)
\nonumber
\\
&\ge& \log V(x+y),
\end{eqnarray*}
and thus $V(x+y) \le V(x)V(y)$ for $x,y \ge 0$. Using
this inequality, we have
\[
\sum_{\ell=\lfloor \delta k^{1 - \alpha} \rfloor + 1}^{\infty}
V(k + \ell)\vc{A}_N^{\ast M}(\ell)\vc{e} 
\le
V(k)
\sum_{\ell=\lfloor \delta k^{1 - \alpha} \rfloor + 1}^{\infty}
V(\ell)\vc{A}_N^{\ast M}(\ell)\vc{e}. 
\]
Therefore, (\ref{lim-V'(k)-(i)}) implies (\ref{lim-V'(k)}).

Next, we prove that (\ref{lim-V'(k)-(ii)}) implies
(\ref{lim-V'(k)}). To this end, we suppose that (\ref{lim-V'(k)-(ii)})
holds. It then follows that, for any $c > 0$, $\sum_{\ell=0}^{\infty}
V(c\ell^{1/(1-\alpha)})\vc{A}_N^{\ast M}(\ell)\vc{e}$ is finite and thus
\begin{equation}
\lim_{k\to\infty}
\sum_{\ell=\lfloor \delta k^{1 - \alpha} \rfloor + 1}^{\infty}
V(c\ell^{1/(1-\alpha)})\vc{A}_N^{\ast M}(\ell)\vc{e} 
= \vc{0}.
\label{eqn-01}
\end{equation}
In addition, since $V$ is increasing and convex,
\begin{equation}
\liminf_{k\to\infty}V'(k) > 0,
\label{add-eqn-0526-02}
\end{equation}
and, for any $\delta > 0$,
\begin{eqnarray}
\sum_{\ell=\lfloor \delta k^{1 - \alpha} \rfloor + 1}^{\infty}
V(k + \ell)\vc{A}_N^{\ast M}(\ell)\vc{e}
&\le& \sum_{\ell=\lfloor \delta k^{1 - \alpha} \rfloor + 1}^{\infty}
V(\ell+(\ell/\delta)^{1/(1 - \alpha)})\vc{A}_N^{\ast M}(\ell)\vc{e}
\nonumber
\\
&\le& \sum_{\ell=\lfloor \delta k^{1 - \alpha} \rfloor + 1}^{\infty}
V( c_{\delta}\ell^{1/(1 - \alpha)} )\vc{A}_N^{\ast M}(\ell)\vc{e}
\qquad k \in \bbZ_+,
\label{add-eqn-0526-01}
\end{eqnarray}
where $c_{\delta} \ge 1 + \delta^{-1/(1 - \alpha)}$. 
Combining (\ref{eqn-01}), (\ref{add-eqn-0526-02}) and (\ref{add-eqn-0526-01}), we have (\ref{lim-V'(k)}). 
\end{proof}

\medskip

Using Lemma~\ref{lem-sufficient-cond-V}, we present the typical
examples of the function $V$ satisfying
Assumption~\ref{assumpt-moment-cond}, by considering the three cases:
\begin{enumerate}
\renewcommand{\labelenumi}{(\alph{enumi})}
\item $\vc{A}(k) \asymp g_1(k):=\exp\{-c k^{\alpha}\}$ for some $c >
  0$ and $0 < \alpha < 1$;
\item $\vc{A}(k) \asymp g_2(k):=k^{-\beta}$ for some $\beta > 2$; and
\item $\vc{A}(k) \asymp g_3(k):=k^{-2} \{\log (k+1)\}^{-\gamma}$ for some $\gamma > 1$,
\end{enumerate}
where we write $\vc{H}(x) \asymp g(x)$ if $\vc{H}$ is a nonnegative
matrix-valued function such that both $\liminf_{x\to\infty} \vc{H}(x)
/ g(x)$ and $\limsup_{x\to\infty} \vc{H}(x) / g(x)$ are finite and not
equal to the zero matrix for a scalar-valued function $g$ that is
eventually nonnegative. For $i=1,2,3$, it follows from $\vc{A}(k)
\asymp g_i(k)$ and (\ref{defn-widetilde{A}(k)}) that $\vc{A}_N(k)
\asymp g_i(k)$. Thus, $\vc{A}_N^{\ast M}(k) \asymp g_i(k)$ for
$i=1,2,3$, which can be readily proved by using the extensions of
\cite[Proposition~A.2.6]{Kimu13} to the upper and lower limits.

We will see later, from the examples of the function $V$, that the
decay of the error bound (\ref{bound-GIG1-type}) is {\it moderately}
exponential (i.e., heavy-tailed Weibull-like) in Case (a); polynomial
in Case (b); and logarithmic in Case (c), as the truncation parameter
$n$ increases. Therefore, Cases (a), (b) and (c) are called {\it
  moderately exponential case}, {\it polynomial case}, and {\it
  logarithmic case}, respectively.

\subsection{Moderately exponential case}

We suppose that $\vc{A}(k) \asymp g_1(k)$, i.e., $\vc{A}(k) \asymp
\exp\{-c k^{\alpha}\}$ for some $c > 0$ and $0 < \alpha < 1$. We then
fix $V$ such that
\begin{equation}
V(x) = \exp\{c_0 (x + x_0)^{\alpha}\},
\qquad x \ge 0,
\label{eqn-V-Weibul}
\end{equation}
where $0 < c_0 < c$ and $x_0 \ge 1/(\alpha c_0)^{1/\alpha}$.  Clearly,
$V$ is increasing and log-concave, and conditions (i) and (ii) of
Assumption~\ref{assumpt-moment-cond} are satisfied. In what follows,
we confirm that the remaining conditions of
Assumption~\ref{assumpt-moment-cond} are satisfied.

From (\ref{eqn-V-Weibul}), we have
\begin{eqnarray}
V'(x) 
&=& \alpha c_0 (x + x_0)^{\alpha-1}\exp\{c_0 (x + x_0)^{\alpha}\} > 0,
\quad x > 0,
\label{eqn-V'-Weibul}
\\
\qquad V''(x) 
&=&  \alpha c_0 (x + x_0)^{\alpha-2}\exp\{c_0 (x + x_0)^{\alpha}\}
\{\alpha c_0 (x + x_0)^{\alpha} - (1 - \alpha) \}, \quad x > 0,
\label{eqn-V''-Weibul}
\end{eqnarray}
which yield
\begin{eqnarray}
{V'(x) \over V(x)}
&=& \alpha c_0 (x + x_0)^{\alpha-1}, 
\qquad  x > 0,
\label{eqn-V'/V-Weibul}
\\
{V''(x) \over V'(x)}
&=& (x + x_0)^{-1}
\{\alpha c_0 (x + x_0)^{\alpha} - (1 - \alpha)\}, 
\qquad  x > 0. 
\label{eqn-V''/V'-Weibul}
\end{eqnarray}
Equation (\ref{eqn-V'-Weibul}) implies that condition (iii) of
Assumption~\ref{assumpt-moment-cond} holds and that
\begin{eqnarray*}
\lim_{\delta\downarrow0}\lim_{x\to\infty}
{V'(x + \delta x^{1 - \alpha}) \over V'(x)}
&=& \lim_{\delta\downarrow0}\lim_{x\to\infty}
\left(1 + {\delta x^{1 - \alpha} \over x+x_0} \right)^{\alpha - 1}
\nonumber
\\
&& {} \times
\exp\left\{
c_0(x + x_0)^{\alpha}\left[ 
\left( 1 + {\delta x^{1 - \alpha} \over x+x_0} \right)^{\alpha} - 1
\right]
\right\}
\nonumber
\\
&=& \lim_{\delta\downarrow0}\lim_{x\to\infty}
\exp\left\{
c_0(x + x_0)^{\alpha} {\alpha \delta x^{1 - \alpha} \over x + x_0}
\right\}
\nonumber
\\
&=& \lim_{\delta\downarrow0}\lim_{x\to\infty}
\exp\left\{
\alpha  c_0 \delta \left( { x \over x + x_0} \right)^{1 - \alpha}
\right\}
\nonumber
\\
&=& \lim_{\delta\downarrow0}
\exp\left\{ \alpha  c_0 \delta  \right\} = 1,
\end{eqnarray*}
which shows that condition (v) of Assumption~\ref{assumpt-moment-cond}
holds. In addition, since $x_0 \ge 1/(\alpha c_0)^{1/\alpha} >
\{(1-\alpha)/(\alpha c_0)\}^{1/\alpha}$, it follows from
(\ref{eqn-V''-Weibul}) and (\ref{eqn-V''/V'-Weibul}) that, for $x >
0$, $V''(x) > 0$ and $V''(x) / V'(x)$ is nonincreasing, i.e.,
condition (iv) of Assumption~\ref{assumpt-moment-cond} is satisfied.

Finally, we confirm that (\ref{lim-V'(k)}) holds.  It follows from
(\ref{eqn-V-Weibul}), (\ref{eqn-V'/V-Weibul}) and $\vc{A}_N^{\ast
  M}(k) \asymp \exp\{-c k^{\alpha}\}$ that there exists some finite $C
> 0$ such that, for all sufficiently large $k$,
\begin{eqnarray*}
{V(k) \over V'(k)}
\sum_{\ell=\lfloor \delta k^{1 - \alpha} \rfloor + 1}^{\infty}V(\ell)
\vc{A}_N^{\ast M}(\ell)\vc{e} 
&\le& C (k+x_0)^{1 - \alpha}
\sum_{\ell=\lfloor \delta k^{1 - \alpha} \rfloor + 1}^{\infty}
\exp\{-(c-c_0) \ell^{\alpha}\}  \vc{e},
\end{eqnarray*}
which implies that
\[
\lim_{k\to\infty}
{V(k) \over V'(k)}
\sum_{\ell=\lfloor \delta k^{1 - \alpha} \rfloor + 1}^{\infty}
V(\ell)\vc{A}_N^{\ast M}(\ell)\vc{e} 
= \vc{0}.
\]
Combining this and Lemma~\ref{lem-sufficient-cond-V}, we have
(\ref{lim-V'(k)}). Consequently, the function $V$ given in
(\ref{eqn-V-Weibul}) satisfies all the conditions of
Assumption~\ref{assumpt-moment-cond}.

\subsection{Polynomial case}\label{appendix-Polynomial}

We suppose that $\vc{A}(k) \asymp g_2(k) = k^{-\beta}$ for some $\beta
> 2$, and fix $V$ such that
\begin{equation}
V(x) = (x + x_0)^{\beta_0},
\qquad x \ge 0,
\label{eqn-V-Pareto}
\end{equation}
where $1 < \beta_0 < \beta - 1$ and $x_0 > 0$. From
(\ref{eqn-V-Pareto}), we have
\begin{eqnarray}
V'(x) 
&=& \beta_0(x + x_0)^{\beta_0-1} > 0,
\qquad x > 0,
\label{eqn-V'-Pareto}
\\
V''(x) 
&=&  \beta_0 (\beta_0 - 1)(x + x_0)^{\beta_0-2} > 0,
\qquad x > 0.
\nonumber
\end{eqnarray}
Clearly, $V$ is increasing, convex and log-concave, and conditions
(i)--(iv) of Assumption~\ref{assumpt-moment-cond} are
satisfied. From (\ref{eqn-V'-Pareto}), we also obtain
\begin{eqnarray*}
\lim_{\delta\downarrow0}\lim_{x\to\infty}
{V'(x + \delta x) \over V'(x)}
&=& \lim_{\delta\downarrow0} (1 + \delta)^{\beta_0-1} = 1,
\end{eqnarray*}
and thus condition (v) of Assumption~\ref{assumpt-moment-cond} holds
for $\alpha = 0$. Furthermore, it follows from (\ref{eqn-V-Pareto}),
$\alpha = 0$ and $\vc{A}(k) \asymp k^{-\beta}$ that
\begin{eqnarray*}
\lim_{k\to\infty}{ V(\delta k) \over  V(k) } 
&=& \delta^{\beta_0}\qquad \mbox{for any $\delta > 0$},
\\
V(k^{1/(1 - \alpha)})\vc{A}_N^{\ast M}(k)
&\asymp& k^{\beta_0 - \beta}.
\end{eqnarray*}
These equations, together with $\beta_0 - \beta < - 1$, imply
(\ref{lim-V'(k)-(ii)}). Therefore, Lemma~\ref{lem-sufficient-cond-V}
shows that (\ref{lim-V'(k)}) holds for $\alpha = 0$. We have confirmed
that the function $V$ given in (\ref{eqn-V-Pareto}) satisfies all the
conditions of Assumption~\ref{assumpt-moment-cond}.

\subsection{Logarithmic case}

We suppose that $\vc{A}(k) \asymp g_3(k) = k^{-2} \{\log (k+1)
\}^{-\gamma}$ for some $\gamma > 1$, and fix $V$ such that
\begin{equation}
V(x) = (x + x_0) \{\log (x + x_0)\}^{\gamma_0},
\qquad x \ge 0,
\label{eqn-V-Pareto-log}
\end{equation}
where $0 < \gamma_0 < \gamma - 1$ and $x_0 \ge \rme^2$. From
(\ref{eqn-V-Pareto-log}), we have
\begin{eqnarray}
V'(x) 
&=& \{ \log (x + x_0) \}^{\gamma_0-1}
\left[ \log (x + x_0) + \gamma_0 \right] > 0,\quad x > 0,
\label{eqn-V'-Pareto-log}
\\
\qquad~
V''(x) 
&=& \gamma_0 (x + x_0)^{-1} \{ \log (x + x_0) \}^{\gamma_0-2}
[ \log (x + x_0) + \gamma_0 - 1 ] > 0,\quad x > 0.
\label{eqn-V''-Pareto-log}
\end{eqnarray}
Therefore, $V$ is increasing, convex and log-concave and, conditions
(i)--(iii) of Assumption~\ref{assumpt-moment-cond} are satisfied.

In addition, (\ref{eqn-V'-Pareto-log}) and
(\ref{eqn-V''-Pareto-log}) yield
\begin{eqnarray}
\qquad
{V''(x) \over V'(x)}
&=& \gamma_0 (x + x_0)^{-1} 
\left[
\{ \log (x + x_0) \}^{-1}
{ \log (x + x_0) + \gamma_0 - 1 \over \log (x + x_0) + \gamma_0 }
\right],
\quad x > 0.
\label{V''(x)/V'(x)-Pareto-log}
\end{eqnarray}
We now set $y=\log (x + x_0) \ge 2$ and denote by $F(y)$ the part in
the square bracket in the right hand side of
(\ref{V''(x)/V'(x)-Pareto-log}), i.e.,
\[
F(y) = {y + \gamma_0 - 1 \over y(y + \gamma_0)},
\qquad y \ge 2.
\]
We then have
\[
F'(y) = -{(y + \gamma_0 -1)^2 + \gamma_0-1 
\over \{y(y + \gamma_0)\}^2 } < 0
\quad \mbox{for all $y \ge 2$}.
\]
Consequently, $V''(x)/V'(x)$ is nonincreasing for all $x > 0$, i.e.,
condition (iv) of Assumption~\ref{assumpt-moment-cond} holds.

It remains to show that (\ref{lim-V'(k)}) and condition (v) of
Assumption~\ref{assumpt-moment-cond} hold.  It follows from
(\ref{eqn-V'-Pareto-log}) that
\begin{eqnarray*}
\lim_{x\to\infty}
{V'(x + \delta x) \over V'(x)} 
= 1 \quad \mbox{for any $\delta > 0$},
\end{eqnarray*}
which shows that condition (v) of Assumption~\ref{assumpt-moment-cond}
holds for $\alpha = 0$. It also follows from (\ref{eqn-V-Pareto-log}),
$\alpha = 0$ and $\vc{A}_N^{\ast M}(k) \asymp k^{-2} \{\log (k+1)
\}^{-\gamma}$ that
\begin{eqnarray*}
\lim_{k\to\infty}{V(\delta k) \over V(k)}
&=& \delta \qquad \mbox{for any $\delta > 0$},
\\
V(k^{1/(1 - \alpha)})\vc{A}_N^{\ast M}(k) 
&\asymp& k^{-1}\{\log (k+1) \}^{\gamma_0 - \gamma}.
\end{eqnarray*}
Combining these equations with $\gamma_0 - \gamma < -1$, we have
(\ref{lim-V'(k)-(ii)}). Therefore, (\ref{lim-V'(k)}) holds for $\alpha
= 0$ (see Lemma~\ref{lem-sufficient-cond-V}). As a result, the
function $V$ given in (\ref{eqn-V-Pareto-log}) satisfies all the
conditions of Assumption~\ref{assumpt-moment-cond}.

\section{Proof of (\ref{ineqn-151205-02})}\label{appen-ineqn-151205-02}

We first note that (\ref{add-eqn-0526-01}) holds for any $\alpha \in
[0,1)$ and $\delta > 0$.  We then fix $M=N = 1$, $\alpha = 0$, $\delta
  = \delta_0$ and $c_{\delta_0} = \rho \ge 1+1/\delta_0$, where $\rho$
  is given in (\ref{defn-rho}). It then follows from
  (\ref{add-eqn-0526-01}), (\ref{defn-A(k)-special-02}),
  (\ref{eqn-A_1(k)=A(k)}) and (\ref{eqn-V-Pareto-again}) that
\begin{eqnarray}
\lefteqn{
{1 \over V'(k)}
\sum_{\ell=\lfloor \delta_0 k \rfloor + 1}^{\infty}
V(k + \ell)\vc{A}(\ell)\vc{e}
}
\quad &&
\nonumber
\\
&\le& {1 \over V'(k)}
\sum_{\ell=\lfloor \delta_0 k \rfloor + 1}^{\infty}
V( \rho \ell )\vc{A}(\ell)\vc{e}
\nonumber
\\
&\le& {\rho^{\beta_0} \over 2\beta_0(k+x_0)^{\beta_0-1}}
\left(
\begin{array}{c}
\dm\sum_{\ell=\lfloor \delta_0 k \rfloor + 1}^{\infty}
{( \ell + x_0/\rho)^{\beta_0} (\ell + 1)^{-\beta_1} \over \zeta(\beta_1)}
\\
\rule{0mm}{7mm}
\dm\sum_{\ell=\lfloor \delta_0 k \rfloor + 1}^{\infty}
{( \ell + x_0/\rho)^{\beta_0} (\ell + 1)^{-\beta_2} \over \zeta(\beta_2)}

\end{array}
\right)
\nonumber
\\
&\le& {\rho^{\beta_0} \over 2\beta_0 k^{\beta_0-1}}
\left(
\begin{array}{c}
\dm\sum_{\ell=\lfloor \delta_0 k \rfloor + 1}^{\infty}
{( \ell + 1)^{-\beta_1+\beta_0} \over \zeta(\beta_1)}
\\
\rule{0mm}{7mm}
\dm\sum_{\ell=\lfloor \delta_0 k \rfloor + 1}^{\infty}
{( \ell + 1)^{-\beta_2+\beta_0} \over \zeta(\beta_2)}

\end{array}
\right),
\quad k \in \bbN,
\label{ineqn-151205-01}
\end{eqnarray}
where the last inequality follows from $x_0 > 0$ and $x_0/\rho \le
1$ (due to (\ref{defn-rho})). Note here that
\[
\sum_{\ell=\lfloor \delta_0 k \rfloor + 1}^{\infty}
( \ell + 1)^{-\beta}
\le \int_{\lfloor \delta_0 k \rfloor}^{\infty} 
(x+1)^{-\beta} \rmd x
\le { (\delta_0 k )^{-\beta+1} \over \beta - 1},
\qquad
\beta > 1.
\]
Applying this inequality to (\ref{ineqn-151205-01}) yields, for
$k\in\bbN$,
\begin{eqnarray}
{1 \over V'(k)}
\sum_{\ell=\lfloor \delta_0 k \rfloor + 1}^{\infty}
V(k + \ell)\vc{A}(\ell)\vc{e}
&\le& {\rho^{\beta_0} \over 2\beta_0 k^{\beta_0-1}}
\left(
\begin{array}{c}
\dm{
(\delta_0 k )^{-\beta_1+\beta_0+1} 
\over (\beta_1 - \beta_0 - 1)\zeta(\beta_1)
}
\\
\rule{0mm}{7mm}
\dm{
(\delta_0 k )^{-\beta_2+\beta_0+1} 
\over (\beta_2 - \beta_0 - 1)\zeta(\beta_2)
}
\end{array}
\right)
\nonumber
\\
&=& {\rho^{\beta_0} \over 2\beta_0 }
\left(
\begin{array}{c}
\delta_0^{-\beta_1+\beta_0+1}
\dm{k^{-\beta_1+2} \over (\beta_1 - \beta_0 - 1)\zeta(\beta_1)}
\\
\rule{0mm}{7mm}
\delta_0^{-\beta_2+\beta_0+1}
\dm{k^{-\beta_2+2} \over (\beta_2 - \beta_0 - 1)\zeta(\beta_2)}
\end{array}
\right)
\nonumber
\\
&=& 
\left(
\begin{array}{c}
C_1 k^{-\beta_1+2}
\\
C_2 k^{-\beta_2+2}
\end{array}
\right),
\label{ineqn-151205-03}
\end{eqnarray}
where the last equality follows from (\ref{defn-C_1-C_2}). Finally,
(\ref{ineqn-151205-03}) and (\ref{eqn-K}) imply that
(\ref{ineqn-151205-02}) holds for all $ k \ge K+1$.

\medskip
{\bf Acknowledgments.}
The author acknowledges stimulating discussions with Shusaku Sakaiya.

%
%
%

\begin{thebibliography}{10}

\bibitem{Asmu03}
S.~{\sc Asmussen},
{\it Applied Probability and Queues}, 2nd ed.,
Springer, New York, 2003.

\bibitem{Brem99}
P.~{\sc Br\'{e}maud},
{\it Markov Chains: Gibbs Fields, Monte Carlo Simulation, and
  Queues},
Springer, New York, 1999.

\bibitem{Douc04}
R.~{\sc Douc}, G.~{\sc Fort}, {\sc and} E.~{\sc Moulines},
{\it Practical drift conditions for subgeometric rates of convergence},
Ann. Appl. Prob., 14 (2004), pp.~1353--1377.

\bibitem{Gut05}
A.~{\sc Gut},
{\it Some remarks on the Riemann zeta distribution},
Preprint, U.U.D.M.~Report 2005:6, ISSN 1101--3591, Department
of Mathematics, Uppsala University, 2005.

\bibitem{Hart12}
A.~G.~{\sc Hart} and R.~L.~{\sc Tweedie}, 
{\it Convergence of Invariant Measures of Truncation Approximations to Markov Processes}, 
Appl. Math., 3 (2012), 2205--2215.

\bibitem{He14}
Q.-M.~{\sc He},
{\it Fundamentals of Matrix-Analytic Methods},
Springer, New York, 2014.

\bibitem{Herv14}
L.~{\sc Herv\'{e}} {\sc and} J.~{\sc Ledoux},
{\it Approximating {Markov} chains and {V}-geometric ergodicity via weak
  perturbation theory},
Stoch. Process. Appl., 124 (2014), pp.~613--638.

\bibitem{Jarn03}
S.~F.~{\sc Jarner} and R.~L.~{\sc Tweedie},
{\it Necessary conditions for geometric and
polynomial ergodicity of random-walk-type Markov chains},
Bernoulli, 9 (2003), pp.~559--578.

\bibitem{Kimu13}
T.~{\sc Kimura}, H.~{\sc Masuyama}, {\sc and} Y.~{\sc Takahashi},
{\it Subexponential asymptotics of the stationary distributions of GI/G/1-type Markov chain},
Stoch. Models, 29 (2013), pp.~190--239.

\bibitem{Li00}
H.~{\sc Li} {\sc and} Y.~Q.~{\sc Zhao},
{\it Stochastic block-monotonicity in the approximation of the stationary
  distribution of infinite Markov chains},
Stoch. Models, 16 (2000), pp.~313--333.

\bibitem{Li05}
Q.-L.~{\sc Li} {\sc and} Y.~Q.~{\sc Zhao},
{\it Heavy-tailed asymptotics of stationary probability vectors of Markov chains of GI/G/1 type},
Adv. Appl. Prob., 37 (2005), pp.~482--509.

\bibitem{Liu10}
Y.~{\sc Liu},
{\it Augmented truncation approximations of discrete-time Markov chains},
Oper. Res. Lett., 38 (2010), pp.~218--222.

\bibitem{Liu15}
Y.~{\sc Liu},
{\it Perturbation analysis for continuous-time Markov chains},
Sci. China Math., 58 (2015), pp.~2633--2642.

\bibitem{Mao14}
Y.~{\sc Mao}, Y.~{\sc Tai}, Y.~Q.~{\sc Zhao}, {\sc and} J.~{\sc Zou},
{\it Ergodicity for the GI/G/1-type Markov chain}, 
J. Appl. Prob. Statist., 9 (2014), pp. 31--44. 

\bibitem{Masu09}
H.~{\sc Masuyama}, B.~{\sc Liu}, {\sc and} T.~{\sc Takine},
{\it Subexponential asymptotics of the BMAP/GI/1 queue}, J. Oper. Res. Soc. Jpn., 52 (2009), pp.~377--401.

\bibitem{Masu11}
H.~{\sc Masuyama},
{\it Subexponential asymptotics of the stationary distributions of M/G/1-type Markov chains}, 
Eur. J. Oper. Res., 213 (2011), pp.~509--516.

\bibitem{Masu13}
H.~{\sc Masuyama},
{\it Tail asymptotics for cumulative processes sampled at heavy-tailed
  random times with applications to queueing models in Markovian
  environments}, J. Oper. Res. Soc. Jpn., 56 (2013), pp.~257--308.

\bibitem{Masu15}
H.~{\sc Masuyama},
{\it Error bounds for augmented truncations of discrete-time
  block-monotone Markov chains under geometric drift conditions},
Adv. Appl. Prob., 47 (2015), pp.~83--105.

\bibitem{Masu15-ANOR}
H.~{\sc Masuyama},
{\it A sufficient condition for the subexponential asymptotics of GI/G/1-type Markov chains with queueing applications},
Ann. Oper. Res., 247 (2016), pp. 65-95, 2016.

\bibitem{Masu15-LAA}
H.~{\sc Masuyama},
{\it Continuous-time block-monotone Markov chains and their block-augmented truncations}, Lin. Alg. Appl., 514 (2017), pp.~105--150. 

\bibitem{Masu16}
H.~{\sc Masuyama},
{\it Error bounds for last-column-block-augmented truncations of block-structured Markov chains}, To appear in J. Oper. Res. Soc. Jpn., 60 (2017) 
(Preprint arXiv:1601.03489).

\bibitem{Meyn09}
S.~P.~{\sc Meyn} {\sc and} R.~L.~{\sc Tweedie},
{\it Markov Chains and Stochastic Stability}, 2nd ed.,
Cambridge University Press, Cambridge, 2009.

\bibitem{Seng90}
B.~{\sc Sengupta},
{\it The semi-Markovian queue: theory and applications},
Stoch. Models, 6 (1990), pp.~383--413.

\bibitem{Twee98}
R.~L.~{\sc Tweedie},
{\it Truncation approximations of invariant measures for Markov chains},
J. Appl. Prob., 35 (1998), pp.~517--536.

\bibitem{Zeif14b}
A.~{\sc Zeifman}, V.~{\sc Korolev}, Y.~{\sc Satin}, 
A.~{\sc Korotysheva}, {\sc and} V.~{\sc Bening},
{\it Perturbation bounds and truncations for a class of Markovian
  queues}, Queueing Syst., 76 (2014), pp.~205--221.

\bibitem{Zeif12}
A.~{\sc Zeifman} {\sc and} A.~{\sc Korotysheva},
{\it Perturbation bounds for {${\rm M}_t/{\rm M}_t/N$} queue with
  catastrophes},
Stoch. Models, 28, (2012), pp.~49--62.

\bibitem{Zeif14a}
A.~{\sc Zeifman}, Y.~{\sc Satin}, V.~{\sc Korolev}, {\sc and} S.~{\sc Shorgin},
{\it On truncations for weakly ergodic inhomogeneous birth and death
  processes},
Int. J. Appl. Math. Comput. Sci., 24 (2014), pp.~503--518.

\end{thebibliography}

\end{document}